\numberwithin{equation}{section}
\numberwithin{figure}{section}
\theoremstyle{plain}
\newtheorem{thm}{Theorem}[section]
  \theoremstyle{plain}
  \newtheorem{lem}[thm]{Lemma}
  \theoremstyle{plain}
  \newtheorem{algorithm}[thm]{Algorithm}
  \theoremstyle{definition}
  \newtheorem{defn}[thm]{Definition}
  \theoremstyle{remark}
  \newtheorem{claim}[thm]{Claim}
  \theoremstyle{plain}
  \newtheorem*{lem*}{Lemma}
  \theoremstyle{plain}
  \newtheorem{cor}[thm]{Corollary}
  \theoremstyle{remark}
  \newtheorem{rem}[thm]{Remark}
 \theoremstyle{definition}
  \newtheorem{example}[thm]{Example}
  \theoremstyle{plain}
  \newtheorem{prop}[thm]{Proposition}
\DeclareMathOperator{\shift}{E}
\DeclareMathOperator{\id}{I}
\DeclareMathOperator{\diag}{diag}
\DeclareMathOperator{\diam}{diam}
\DeclareMathOperator{\combop}{\mathcal{D}}
\begin{document}

\title{Algebraic Fourier reconstruction of piecewise smooth functions}

\author{Dmitry Batenkov}

\address{Department of Mathematics\\
Weizmann Institute of Science\\
Rehovot 76100\\
Israel}

\email{dima.batenkov@weizmann.ac.il}

\urladdr{http://www.wisdom.weizmann.ac.il/~dmitryb}

\author{Yosef Yomdin}

\email{yosef.yomdin@weizmann.ac.il}

\urladdr{http://www.wisdom.weizmann.ac.il/~yomdin}

\keywords{Fourier inversion, nonlinear approximation, piecewise-smooth functions}

\subjclass[2000]{Primary: 65T40; Secondary: 65D15}

\date{\today}

\maketitle
\newcommandx\fc[1][usedefault, addprefix=\global, 1=k]{\ensuremath{c_{#1}}}
\global\long\def\startcoeff{\ensuremath{M}}
\newcommandx\frsum[2][usedefault, addprefix=\global, 1=\fun, 2=\startcoeff]{\mathfrak{F}_{#2}\left(#1\right)}
\global\long\def\ord{\ensuremath{d}}
\global\long\def\fun{\ensuremath{f}}
\global\long\def\fcsmoothbound{R}
\global\long\def\vec#1{\ensuremath{\mathbf{#1}}}
\global\long\def\smooth{\ensuremath{\Psi}}
\global\long\def\sing{\ensuremath{\Phi}}
\global\long\def\jc{\ensuremath{A}}
\global\long\def\np{\ensuremath{K}}
\global\long\def\jp{\xi}
\global\long\def\nn#1{\widetilde{#1}}
\global\long\def\dimdim{\mathcal{F}}
\global\long\def\vec#1{\ensuremath{\mathbf{#1}}}
\global\long\def\vvec#1{\boldsymbol{#1}}
\global\long\def\lag#1#2{\mathcal{L}_{#2}^{(#1)}}

\begin{abstract}
Accurate reconstruction of piecewise-smooth functions from a finite
number of Fourier coefficients is an important problem in various
applications. This probelm exhibits an inherent inaccuracy, in particular
the Gibbs phenomenon, and it is being intensively investigated during
the last decades. Several nonlinear reconstruction methods have been
proposed in the literature, and it is by now well-established that
the ``classical'' convergence order can be completely restored up
to the discontinuities. Still, the maximal accuracy of determining
the positions of these discontinuities remains an open question.

In this paper we prove that the locations of the jumps (and subsequently
the pointwise values of the function) can be reconstructed with at
least ``half the classical accuracy''. In particular, we develop
a constructive approximation procedure which, given the first $k$
Fourier coefficients of a piecewise-$C^{2d+1}$ function, recovers
the locations of the jumps with accuracy $\sim k^{-\left(d+2\right)}$,
and the values of the function between the jumps with accuracy $\sim k^{-\left(d+1\right)}$
(similar estimates are obtained for the associated jump magnitudes).
A key ingredient of the algorithm is to start with the case of a single
discontinuity, where a modified version of one of the existing algebraic
methods (due to K.Eckhoff) may be applied. It turns out that the additional
orders of smoothness produce highly correlated error terms in the
Fourier coefficients, which eventually cancel out in the corresponding
algebraic equations. To handle more than one jump, we apply a localization
procedure via a convolution in the Fourier domain, which eventually
preserves the accuracy estimates obtained for the single jump. We
provide some numerical results which support the theoretical predictions.
\end{abstract}

\section{\label{sec:intro}Introduction}

Consider the problem of reconstructing a function $\fun:\left[-\pi,\pi\right]\to\reals$
from a finite number of its Fourier coefficients\[
\fc(\fun)\isdef\frac{1}{2\pi}\int_{-\pi}^{\pi}\fun(t)\ee^{-\imath kt}\dd t,\qquad k=0,1,\dots\startcoeff\]

It is well-known that for periodic smooth functions, the truncated
Fourier series\[
\frsum\isdef\sum_{|k|=0}^{\startcoeff}\fc(\fun)\ee^{\imath kx}\]
converges to $\fun$ very fast, subsequently making Fourier analysis
very attractive in a vast number of applications. We have by the classical
Lebesgue lemma (see e.g. \cite{Natanson1949}) that \[
\max_{-\pi\leq x\leq\pi}\left|\fun(x)-\frsum(x)\right|\leq\left(3+\ln\startcoeff\right)\cdot E_{\startcoeff}\left(\fun\right)\]
where $E_{\startcoeff}\left(\fun\right)$ is the error of the best
uniform approximation to $\fun$ by trigonometric polynomials of degree
at most $\startcoeff$. This number, in turn, depends on the smoothness
of the function. In particular:
\begin{enumerate}
\item If $\fun$ is $\ord$-times continuously differentiable (including
at the endpoints) and $\left|\der{\fun}{\ord}(x)\right|\leq R$, then
(see \cite[Vol.I, Chapter 3, Theorem 13.6]{zygmund1959trigonometric})\begin{equation}
E_{\startcoeff}\left(\fun\right)\leq C_{\ord}\cdot R\cdot\startcoeff^{-\ord}\label{eq:best-approximation-smooth}\end{equation}

\item If $\fun$ is analytic, then by classical results of S.Bernstein (see
e.g. \cite[Chapter IX]{Natanson1949}) there exist constants $C$
and $q<1$ such that\begin{equation}
E_{\startcoeff}\left(\fun\right)\leq C\cdot q^{\startcoeff}\label{eq:best-approximation-analytic}\end{equation}

\end{enumerate}
Yet many realistic phenomena exhibit discontinuities, in which case
the unknown function $\fun$ is only piecewise-smooth. As a result,
the trigonometric polynomial $\frsum$ no longer provides a good approximation
to $\fun$ due to the slow convergence of the Fourier series (one
of the manifestations of this fact is commonly known as the ``Gibbs
phenomenon''). It has very serious implications, for example when
using spectral methods to calculate solutions of PDEs with shocks.
Therefore an important question arises: \emph{``Can such piecewise-smooth
functions be reconstructed from their Fourier measurements, with accuracy
which is comparable to the 'classical' one (such as \eqref{eq:best-approximation-smooth}
or \eqref{eq:best-approximation-analytic})''?}

This problem has received much attention, especially in the last few
decades (\cite{guilpin2004eps,brezinski2004extrapolation,beckermann2008rgp,marchbarone:2000,gelb1999detection,bauer-band,mhaskar2000polynomial,eckhoff1995arf,eckhoff1998high,kvernadze2004ajd,banerjee1998exponentially,gottlieb1997gpa,driscoll2001pba,Boyd20091404,barkhudaryan2007asymptotic,wei2007detection}
would be only a partial list). It has long been known that the key
problem for Fourier series acceleration is the detection of the shock
locations. By now it is well-established that classical convergence
rates can be restored uniformly up to the discontinuities (see e.g.
\cite{gottlieb1997gpa}), but the corresponding question for the jump
locations themselves is still open. Notice that any linear approximation
procedure with free (a-priori unknown) jump locations will not be
able to achieve accuracy higher than $\frac{1}{\sqrt{\startcoeff}}$
- see \cite{ettinger2008lvn}.

Several partial results and conjectures in this direction are known,
in particular the following. The concentration method of Gelb\&Tadmor
\cite{gelb-segmentation,gelb1999detection} recovers the jumps with
first order accuracy, and it can be extended to higher orders. Kvernadze
\cite{kvernadze2004ajd} proves that his method can recover jumps
of a $C^{3}$ function with second order accuracy. In \cite{ettinger2008lvn,batenkov2009arp}
we have conjectured that the locations of the jumps of a piecewise
$C^{\ord}$ function can be recovered with accuracy $k^{-d}$ from
its first $k$ Fourier coefficients (a similar conjecture is made
in \cite{vindas2009local}). Both Eckhoff \cite{eckhoff1995arf} and
Banerjee\&Geer \cite{banerjee1998exponentially} made the same conjectures
with respect to their particular reconstruction methods. We would
also like to mention a related but different problem: reconstruction
of piecewise-smooth functions from point measurements. There, adaptive
approximations can achieve asymptotic accuracy $k^{-d}$ for piecewise
$C^{d}$ functions \cite{arandiga2005interpolation,Plaskota2009227,YaronLipman07082009}.

With this motivation, our main goal in this paper is to arrive at
a better understanding of the ``optimal'', or the ``best possible''
accuracy of reconstruction, especially with respect to the locations
and the magnitudes of the jumps. As a means to achieve this goal,
we develop a reconstruction method which allows for explicit accuracy
analysis. Our method is a ``hybrid'' between a Fourier filtering
technique which is first applied to localize the jumps, and the algebraic
approach of Eckhoff/Kvernadze which is used in order to resolve each
discontinuity one at a time to a high order of accuracy. It is precisely
this ``localization'' which makes the subsequent analysis tractable.

Our accuracy analysis is ``asymptotic'' in nature, although we provide
the explicit constants at every step. These constants in general depend
upon various a-priori estimates (such as the minimal distance between
the jumps, or the upper bound on the jump magnitudes), which are presumably
available. See discussion in \prettyref{sec:procedure-overview} below,
in particular \eqref{eq:desired-estimates}.

Let us now give a brief summary of the main results.
\begin{enumerate}
\item If a function with a single jump has at least $2d+1$ continuous derivatives
everywhere except the jump, then the jump location can be recovered
from the first $\startcoeff$ Fourier coefficients with error at most
$\sim\startcoeff^{-d-2}$ (\prettyref{thm:jump-final-accuracy}).
In addition, a jump in the $l$-th derivative can be recovered with
error at most $\sim\startcoeff^{l-d-1}$ (\prettyref{thm:magnitude-final-accuracy}).\emph{
}A key observation in the analysis is that the additional orders of
smoothness produce highly correlated error terms in the Fourier coefficients,
which eventually cancel out in the corresponding algebraic equations.
\item The localization step does not ``destroy'' the above accuracy estimates
(\prettyref{thm:localization-preserves-accuracy}). Thus, the pointwise
values of $f$ are recovered with the accuracy $\sim\startcoeff^{-d-1}$
(\prettyref{thm:final-accuracy}) up to the jumps.
\item Numerical simulations are consistent with the theoretical accuracy
predictions (\prettyref{sec:numerical-results}).
\end{enumerate}
By means of this constructive approximation procedure with provable
asymptotic convergence properties, we therefore demonstrate that \emph{the
algebraic reconstruction methods for piecewise-smooth data can be
at least ``half accurate'' compared to the classical approximation
theory for smooth data.}

We provide an overview of the reconstruction procedure in \prettyref{sec:procedure-overview}.
For expository reasons, the details of the localization step and the
analysis of its accuracy are postponed until \prettyref{sec:localization}.
The resolution method of a single jump is presented in \prettyref{sec:single-jump-reconstruction},
while \prettyref{sec:accuracy-single-jump} is devoted to proving
its asymptotic convergence order. Finally, the accuracy of the whole
reconstruction is analyzed in \prettyref{sec:final-accuracy}. Some
common notations used throughout the paper are summarized below.

\thanks{We would like to thank Ch.Fefferman, E.Tadmor and N.Zobin for useful
discussions.}

\subsection{Notation}
\begin{itemize}
\item $\naturals$ denotes the natural numbers, $\reals$ - the real numbers,
$\complexfield$ - the complex numbers.
\item $C^{d}$ denotes the class of smooth functions which are continuously
differentiable $d$ times everywhere. $C^{\infty}$ is the class of
smooth functions having continuous derivatives of all orders.
\item $B_{r}\left(z\right)$ is the ball of radius $r$ centered at $z$,
and $\partial B_{r}\left(z\right)$ is the boundary of such a ball.
\end{itemize}

\section{The algebraic reconstruction method\label{sec:procedure-overview}}

\global\long\def\jcbound{\jc^{*}}
\global\long\def\geom{\mathbb{G}}
\global\long\def\jcallboundu{L}
\global\long\def\magbounds{\mathbb{A}}

Let us assume that $\fun$ has $\np\geq0$ jump discontinuities $\left\{ \jp_{j}\right\} _{j=1}^{\np}$.
Furthermore, we assume that $\fun\in C^{\ord}$ in every segment $\left(\jp_{j-1},\jp_{j}\right)$,
and we denote the associated jump magnitudes at $\jp_{j}$ by\[
\jc_{l,j}\isdef\der{\fun}{l}(\jp_{j}^{+})-\der{\fun}{l}(\jp_{j}^{-})\]

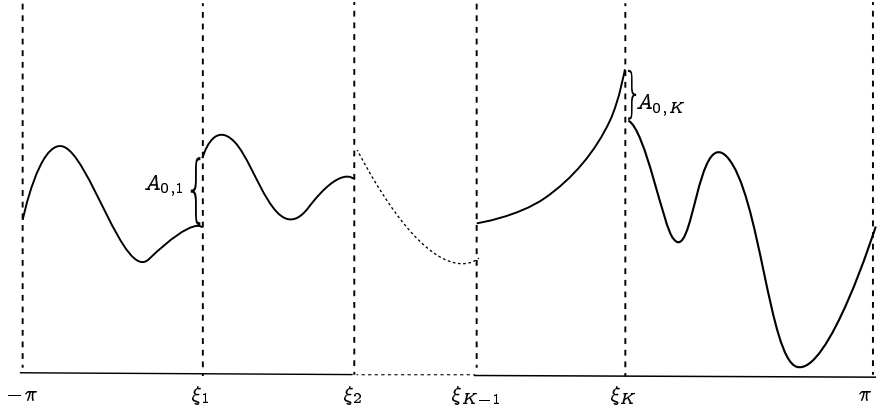
\begin{figure}
\newcommand{\piecewiselabels}{
\rput(17,20){$-\pi$}
\rput(153,20){$\jp_1$}
\rput(268,20){$\jp_2$}
\rput(361,20){$\jp_{\np-1}$}
\rput(472,20){$\jp_{\np}$}
\rput(655,20){$\pi$}
\rput(125,179){$\jc_{0,1}$}
\rput(500,240){$\jc_{0,\np}$}
}
\psset{xunit=.5pt,yunit=.5pt,runit=.5pt}
\begin{pspicture}(663.50164795,324.19845581)
{
\newrgbcolor{curcolor}{0 0 0}
\pscustom[linewidth=1.46649683,linecolor=curcolor,linestyle=dashed,dash=4.39949053 4.39949053]
{
\newpath
\moveto(268.253057,318.09554581)
\lineto(268.253057,35.87023581)
}
}
{
\newrgbcolor{curcolor}{0 0 0}
\pscustom[linewidth=1.46649683,linecolor=curcolor,linestyle=dashed,dash=4.39949053 4.39949053]
{
\newpath
\moveto(153.680877,34.96191581)
\lineto(153.680877,317.18722581)
}
}
{
\newrgbcolor{curcolor}{0 0 0}
\pscustom[linewidth=1.46649683,linecolor=curcolor,linestyle=dashed,dash=4.39949053 4.39949053]
{
\newpath
\moveto(17.360282,41.23989581)
\lineto(17.360282,323.46520781)
}
}
{
\newrgbcolor{curcolor}{0 0 0}
\pscustom[linewidth=1.08071089,linecolor=curcolor]
{
\newpath
\moveto(15.266137,37.40652581)
\lineto(267.002347,36.03931581)
}
}
{
\newrgbcolor{curcolor}{0 0 0}
\pscustom[linewidth=1.46649683,linecolor=curcolor,linestyle=dashed,dash=4.39949053 4.39949053]
{
\newpath
\moveto(660.627627,318.09554581)
\lineto(660.627627,35.87023581)
}
}
{
\newrgbcolor{curcolor}{0 0 0}
\pscustom[linewidth=1.46649683,linecolor=curcolor,linestyle=dashed,dash=4.39949053 4.39949053]
{
\newpath
\moveto(473.305157,34.96191581)
\lineto(473.305157,317.18722581)
}
}
{
\newrgbcolor{curcolor}{0 0 0}
\pscustom[linewidth=1.46649683,linecolor=curcolor,linestyle=dashed,dash=4.39949053 4.39949053]
{
\newpath
\moveto(360.837117,41.23989581)
\lineto(360.837117,323.46520781)
}
}
{
\newrgbcolor{curcolor}{0 0 0}
\pscustom[linewidth=1.15019548,linecolor=curcolor]
{
\newpath
\moveto(358.771217,35.35582581)
\lineto(662.926537,34.07405581)
}
}
{
\newrgbcolor{curcolor}{0 0 0}
\pscustom[linewidth=0.82122475,linecolor=curcolor,linestyle=dashed,dash=2.46367434 2.46367434]
{
\newpath
\moveto(359.517737,35.90957581)
\lineto(270.013357,35.90957581)
}
}
{
\newrgbcolor{curcolor}{0 0 0}
\pscustom[linewidth=1.46649683,linecolor=curcolor]
{
\newpath
\moveto(17.393624,153.14648581)
\curveto(17.393624,153.14648581)(29.731092,213.21692581)(47.494186,208.82872581)
\curveto(67.550637,203.87397581)(94.251267,105.51786581)(112.803847,123.36883581)
\curveto(147.787597,157.02960581)(153.353197,147.41223581)(153.353197,147.41223581)
}
}
{
\newrgbcolor{curcolor}{0 0 0}
\pscustom[linewidth=1.46649683,linecolor=curcolor]
{
\newpath
\moveto(153.353197,199.83325581)
\curveto(153.353197,199.83325581)(161.304047,229.88751581)(178.000837,211.85495581)
\curveto(194.697627,193.82239581)(209.009167,130.10736581)(233.656817,162.56596581)
\curveto(258.304447,195.02457581)(267.845467,184.20503581)(267.845467,184.20503581)
}
}
{
\newrgbcolor{curcolor}{0 0 0}
\pscustom[linewidth=1.46649683,linecolor=curcolor]
{
\newpath
\moveto(360.870437,150.54425581)
\curveto(360.870437,150.54425581)(391.878767,155.35294581)(412.550987,169.77898581)
\curveto(433.223207,184.20503581)(450.715077,204.64192581)(461.846267,228.68534581)
\curveto(469.002037,244.31355581)(472.977457,267.15480581)(472.977457,267.15480581)
}
}
{
\newrgbcolor{curcolor}{0 0 0}
\pscustom[linewidth=1.60117018,linecolor=curcolor]
{
\newpath
\moveto(475.587157,228.28361581)
\curveto(488.512897,221.95430581)(502.284417,146.56775581)(509.537827,139.14431581)
\curveto(524.677247,117.38028581)(527.735027,225.52330581)(550.359087,200.69774581)
\curveto(574.961637,173.70117581)(582.145597,48.39384581)(602.968927,41.93968581)
\curveto(626.859187,34.53492581)(662.732927,148.13321581)(662.732927,148.13321581)
}
}
{
\newrgbcolor{curcolor}{0 0 0}
\pscustom[linewidth=1,linecolor=curcolor,linestyle=dashed,dash=2 2]
{
\newpath
\moveto(270.523117,205.98520581)
\curveto(330.649117,91.46696581)(362.500877,124.34837581)(362.500877,124.34837581)
}
}
{
\newrgbcolor{curcolor}{0 0 0}
\pscustom[linestyle=none,fillstyle=solid,fillcolor=curcolor]
{
\newpath
\moveto(151.80040904,151.39123636)
\lineto(151.80040904,149.03967393)
\lineto(151.24525371,149.03967393)
\curveto(149.93634081,149.03968307)(149.02913667,149.71416154)(148.52363857,151.06311137)
\curveto(148.0090962,152.41207543)(147.75182935,155.10087474)(147.75183726,159.12951738)
\lineto(147.75183726,166.04748592)
\curveto(147.75182935,169.16466234)(147.44040107,171.38861838)(146.81755147,172.71936072)
\curveto(146.51062966,173.3755955)(146.08636504,173.70372016)(145.54475632,173.70373569)
\lineto(144.54276866,173.70373569)
\lineto(144.54276866,176.10998562)
\lineto(145.54475632,176.10998562)
\curveto(145.92388071,176.10996768)(146.34814534,176.43809234)(146.81755147,177.09436059)
\curveto(147.44040107,177.95111162)(147.75182935,180.15683851)(147.75183726,183.71154789)
\lineto(147.75183726,190.65686018)
\curveto(147.75182935,194.68546938)(148.0090962,197.36515412)(148.52363857,198.69592244)
\curveto(149.02913667,200.04483886)(149.93634081,200.71931733)(151.24525371,200.71935988)
\lineto(151.80040904,200.71935988)
\lineto(151.80040904,198.3951412)
\lineto(151.19109221,198.3951412)
\curveto(150.33352472,198.39510098)(149.77385649,197.966716)(149.51208586,197.10998499)
\curveto(149.2593228,196.2531761)(149.1329461,194.43937588)(149.13295539,191.6685789)
\lineto(149.13295539,183.76623539)
\curveto(149.1329461,180.90423358)(148.99302905,178.82611072)(148.7132038,177.53186058)
\curveto(148.44238773,176.23757172)(147.97298857,175.36257262)(147.30500492,174.90686065)
\curveto(147.97298857,174.41465693)(148.44690118,173.52142868)(148.72674418,172.22717324)
\curveto(148.9975425,170.93288968)(149.1329461,168.8638814)(149.13295539,166.02014217)
\lineto(149.13295539,158.09045491)
\curveto(149.1329461,155.33785366)(149.2593228,153.53316802)(149.51208586,152.67639257)
\curveto(149.77385649,151.81962812)(150.33352472,151.39124314)(151.19109221,151.39123636)
\lineto(151.80040904,151.39123636)
}
}
{
\newrgbcolor{curcolor}{0 0 0}
\pscustom[linestyle=none,fillstyle=solid,fillcolor=curcolor]
{
\newpath
\moveto(475.37344281,230.62605401)
\lineto(475.37344281,228.90241551)
\lineto(475.84238011,228.90241551)
\curveto(476.94801317,228.9024222)(477.71432457,229.39679864)(478.14131663,230.38554631)
\curveto(478.57594835,231.3743044)(478.79326054,233.34512939)(478.79325386,236.29802721)
\lineto(478.79325386,241.36873116)
\curveto(478.79326054,243.65354623)(479.05632266,245.28365233)(479.58244102,246.25905434)
\curveto(479.84169654,246.7400586)(480.20007103,246.98056605)(480.65756556,246.98057744)
\lineto(481.50394021,246.98057744)
\lineto(481.50394021,248.74430055)
\lineto(480.65756556,248.74430055)
\curveto(480.33732084,248.7442874)(479.97894635,248.98479486)(479.58244102,249.46582364)
\curveto(479.05632266,250.09380147)(478.79326054,251.71054604)(478.79325386,254.31606221)
\lineto(478.79325386,259.40680847)
\curveto(478.79326054,262.35968177)(478.57594835,264.32382601)(478.14131663,265.29924706)
\curveto(477.71432457,266.28797024)(476.94801317,266.78234668)(475.84238011,266.78237786)
\lineto(475.37344281,266.78237786)
\lineto(475.37344281,265.07878167)
\lineto(475.88813009,265.07878167)
\curveto(476.61251364,265.07875219)(477.08526297,264.76475634)(477.30637949,264.13679319)
\curveto(477.51988735,263.50877296)(477.6266372,262.17930118)(477.62662935,260.14837387)
\lineto(477.62662935,254.35614683)
\curveto(477.6266372,252.25836858)(477.74482453,250.73515469)(477.9811917,249.78650057)
\curveto(478.20994887,248.83781808)(478.6064483,248.19646486)(479.1706912,247.86243899)
\curveto(478.6064483,247.50166554)(478.20613637,246.8469508)(477.96975421,245.8982928)
\curveto(477.74101204,244.94961419)(477.6266372,243.43308106)(477.62662935,241.34868885)
\lineto(477.62662935,235.5364195)
\curveto(477.6266372,233.51882922)(477.51988735,232.19603821)(477.30637949,231.56804249)
\curveto(477.08526297,230.94005482)(476.61251364,230.62605898)(475.88813009,230.62605401)
\lineto(475.37344281,230.62605401)
}
}
\piecewiselabels
\end{pspicture}

\caption{A piecewise-smooth function}
\label{fig:piecewise}
\end{figure}

We write the piecewise smooth $\fun$ as the sum $\fun=\smooth+\sing$,
where $\smooth(x)$ is smooth and periodic and $\sing(x)$ is a piecewise
polynomial of degree $\ord$, uniquely determined by $\left\{ \jp_{j}\right\} ,\left\{ \jc_{i,j}\right\} $
such that it ``absorbs'' all the discontinuities of $\fun$ and its
first $\ord$ derivatives. This idea is very old and goes back at
least to A.N.Krylov (\cite{kantokryl62,barkhudaryan2007asymptotic}).
Eckhoff derives the following explicit representation for $\sing(x)$:
\begin{equation}
\begin{split}\sing(x) & =\sum_{j=1}^{\np}\sum_{l=0}^{\ord}\jc_{l,j}V_{l}(x;\jp_{j})\\
V_{n}\left(x;\jp_{j}\right) & =-\frac{\left(2\pi\right)^{n}}{\left(n+1\right)!}B_{n+1}\left(\frac{x-\jp_{j}}{2\pi}\right)\qquad\jp_{j}\leq x\leq\jp_{j}+2\pi\end{split}
\label{eq:sing-part-explicit-bernoulli}\end{equation}
where $V_{n}\left(x;\jp_{j}\right)$ is understood to be periodically
extended to $\left[-\pi,\pi\right]$ and $B_{n}(x)$ is the $n$-th
Bernoulli polynomial. For completeness, let us dervie the formula
for the Fourier coefficients of $\sing(x)$ (it can also be found
in \cite{eckhoff1995arf}).
\begin{lem}
Let $\sing(x)$ be a piecewise polynomial of degree $\ord$, with
jump discontinuities $\left\{ \jp_{j}\right\} _{j=1}^{\np}$ and the
associated jump magnitudes $\left\{ \jc_{l,j}\right\} _{l=0,\dots,\ord}^{j=1,\dots,\np}$.
For definiteness, let us assume that $\fc[0](\sing)=\int_{-\pi}^{\pi}\sing(x)\dd x=0$.
Then\begin{equation}
\fc(\sing)=\frac{1}{2\pi}\sum_{j=1}^{\np}\ee^{-\imath k\jp_{j}}\sum_{l=0}^{\ord}(\imath k)^{-l-1}\jc_{l,j}\label{eq:singular-fourier-explicit}\end{equation}
\end{lem}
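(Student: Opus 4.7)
The plan is to prove the formula by repeated integration by parts, exploiting the fact that $\sing$ is piecewise polynomial of degree $\ord$, so that the $(\ord+1)$-st derivative vanishes identically and the iteration terminates after finitely many steps.

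First I would split the integral over $[-\pi,\pi]$ into the pieces $(\jp_j,\jp_{j+1})$, where indices are taken modulo $\np$ using the periodic extension; on each piece $\sing$ is a polynomial and hence smooth. On a single piece, integration by parts against $e^{-\imath k x}$ (with antiderivative $e^{-\imath k x}/(-\imath k)$) yields a boundary contribution of the form $\bigl[\sing(x)\,e^{-\imath k x}/(-\imath k)\bigr]$ evaluated at the two endpoints, plus an interior integral involving $\sing'(x)$. Summing these boundary contributions over $j$, adjacent terms pair up and the differences $\sing(\jp_j^-)-\sing(\jp_j^+) = -\jc_{0,j}$ appear; by the periodicity the contributions at $\pm\pi$ either cancel or are absorbed into such a jump. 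This produces, after one integration by parts,
\begin{equation*}
2\pi\,\fc(\sing) \;=\; \frac{1}{\imath k}\sum_{j=1}^{\np} e^{-\imath k\jp_j}\jc_{0,j} \;+\; \frac{1}{\imath k}\int_{-\pi}^{\pi}\sing'(x)\,e^{-\imath k x}\,dx.
\end{equation*}

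The second step is to iterate. The derivative $\sing'$ is again piecewise polynomial (of degree $\ord-1$) on the same partition, with jumps $\jc_{1,j}$ at $\jp_j$, and is periodic in the same sense. Thus the same integration by parts applied to $\int\sing'(x)\,e^{-\imath k x}\,dx$ produces a boundary term $\frac{1}{\imath k}\sum_j e^{-\imath k\jp_j}\jc_{1,j}$ together with an integral of $\sing''$, contributing an overall factor $(\imath k)^{-2}$ to the jump term. Inductively, at stage $l$ we pick up a contribution $(\imath k)^{-(l+1)}\sum_j e^{-\imath k\jp_j}\jc_{l,j}$ plus an integral involving $\sing^{(l+1)}$.

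Finally, since $\sing$ is piecewise polynomial of degree $\ord$, the derivative $\sing^{(\ord+1)}$ vanishes identically on each piece, so the remainder integral is zero after $\ord+1$ steps. Collecting all the boundary contributions gives exactly \eqref{eq:singular-fourier-explicit}. The only thing to be careful about is the sign bookkeeping and the treatment of the endpoints $\pm\pi$ under the periodic extension, which is straightforward but should be stated explicitly; the normalization $\fc[0](\sing)=0$ is used only to fix the otherwise undetermined additive constant and does not enter the computation for $k\neq 0$.
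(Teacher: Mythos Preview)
Your proposal is correct and follows essentially the same route as the paper: the paper also splits $[-\pi,\pi]$ into the pieces between consecutive jump points, integrates by parts once to extract the term $\frac{1}{2\pi\imath k}\sum_j \jc_{0,j}\ee^{-\imath k\jp_j}$ plus $\frac{1}{\imath k}\fc(\sing')$, and then repeats the step $\ord+1$ times until the derivative vanishes. The paper's write-up is terser (it does not spell out the endpoint bookkeeping or the role of the normalization $\fc[0](\sing)=0$), but the argument is identical to yours.
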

\begin{proof}
One integration by parts yields for $k\neq0$:\begin{align*}
\begin{split}\fc(\sing) & =\frac{1}{2\pi}\int_{-\pi}^{\pi}\ee^{-\imath kx}\biggl(\sum_{j=0}^{\np}\sing_{j}(x)\biggr)\dd x\\
 & =\sum_{j=0}^{\np}\Biggl(\frac{\bigl(\sing_{j}(\jp_{j+1}^{-})\ee^{-\imath k\jp_{j+1}}-\sing_{j}(\jp_{j}^{+})\ee^{-\imath k\jp_{j}}\bigr)}{-2\pi\imath k}+\frac{1}{2\pi\imath k}\int_{-\pi}^{\pi}\ee^{-\imath kx}\sing'_{j}(x)\dd x\Biggr)\\
 & =\frac{1}{2\pi\imath k}\sum_{j=0}^{\np}\jc_{0,j}\ee^{-\imath k\jp_{j}}+\frac{1}{\imath k}\fc\biggl(\sum_{j=0}^{\np}\sing'_{j}\biggr)\end{split}
\end{align*}
and so after $\ord+1$-fold repetition we obtain (recall that $\der{\sing_{j}}{\ord+1}\equiv0$):\[
\fc(\sing)=\frac{1}{2\pi}\sum_{l=0}^{\ord}\sum_{j=0}^{\np}(\imath k)^{-l-1}\jc_{l,j}\ee^{-\imath k\jp_{j}}\qedhere\]

\end{proof}
A key observation is that if $\smooth$ is sufficiently smooth, then
the contribution of $\fc(\smooth)$ to $\fc(\fun)$ is negligible
\textbf{for large} $k$. Therefore, for some large enough $\startcoeff$
one can build from the equations \eqref{eq:singular-fourier-explicit}
an approximate system\[
\fc\left(\fun\right)\approx\frac{1}{2\pi}\sum_{j=1}^{\np}\omega_{j}^{k}\sum_{l=0}^{\ord}\frac{\jc_{l,j}}{(\imath k)^{l+1}}\qquad k=\startcoeff,\dots,\startcoeff+\ord+1\]
Here and in the rest of the paper we use the notation $\omega_{j}\isdef\ee^{-\imath\jp_{j}}$.

In fact, this system (up to a change of variables and the number of
equations) lies at the heart of the algebraic reconstruction methods
of Eckhoff \cite{eckhoff1995arf}, Banerjee\&Geer \cite{banerjee1998exponentially}
and Kvernadze \cite{kvernadze2004ajd}. Banerjee\&Geer solve it for
all the parameters at once by least squares minimization. Eckhoff
and Kvernadze eliminate all the $\left\{ \jc_{i,j}\right\} $ first,
resulting in a system of polynomial equations for the $\left\{ \jp_{j}\right\} $,
whose coefficients have nonlinear dependence on the initial data.

In contrast, we propose to solve this system separately for each $\jp=\jp_{j}$,
beacuse this case reduces to a single polynomial equation with respect
to $\jp$. We achieve this ``separation'' by filtering the original
Fourier coefficients such that only the part related to a particular
$\jp_{j}$ remains. This step requires some a-priori knowledge of
the approximate locations of the jumps. Fortunately, such an information
can easily be obtained by a variety of methods - see \prettyref{sec:localization}.

Let us finish this section by presenting the main steps of the reconstruction.
We denote the approximately reconstructed parameters with a tilde
sign. If not stated otherwise, it is understood that these approximations
depend on the index $\startcoeff$. It is important to note that we
distinguish between the actual smoothness of the function $\fun$
and the reconstruction order.
\begin{algorithm}
\label{alg:entire-reconstruction}Let $\fun$ be a piecewise-smooth
function with jumps at $\left\{ \jp_{j}\right\} _{j=1}^{\np}$, continuously
differentiable $\ord_{1}$ times between the jumps. Fix a reconstruction
order to be some nonnegative integer $d\leq\ord_{1}$. Let there be
given the first $\startcoeff+\ord+2$ Fourier coefficients of $\fun$.
\begin{enumerate}
\item Solve the system \eqref{eq:singular-part-system} by localization
(\prettyref{alg:localization-alg}) and resolution (\prettyref{alg:rec-single-jump}).
This will provide us with approximate values for the parameters $\left\{ \nn{\jp_{j}}\right\} $
and $\left\{ \nn{\jc}_{l,j}\right\} $.
\item Calculate the sequence\[
\fc(\widetilde{\sing})=\frac{1}{2\pi}\sum_{j=1}^{\np}\nn{\omega}_{j}^{k}\sum_{l=0}^{d}\frac{\widetilde{\jc}_{l,j}}{(\imath k)^{l+1}}\qquad\left|k\right|\leq\startcoeff\]
and subsequently recover the approximate Fourier coefficients of the
smooth part:\[
\fc(\widetilde{\smooth})\isdef\fc(\fun)-\fc(\widetilde{\sing})\qquad\left|k\right|\leq\startcoeff\]
Take the final approximation to be\begin{equation}
\begin{split}\widetilde{\fun} & =\widetilde{\smooth}+\widetilde{\sing}=\sum_{\left|k\right|\leq\startcoeff}\fc(\nn{\smooth})\ee^{\imath kx}+\sum_{j=1}^{\np}\sum_{l=0}^{d}\nn{\jc}_{l,j}V_{l}(x;\nn{\jp_{j}})\end{split}
\label{eq:final-approximation-explicit}\end{equation}

\end{enumerate}
\end{algorithm}
The rest of this paper is devoted to providing all the details of
the above algorithm and analyzing its accuracy. In particular, we
will seek estimates of the form\begin{equation}
\begin{split}\left|\nn{\jp_{j}}-\jp_{j}\right| & \leq C^{*}\left(\ord,\ord_{1},\np\right)\cdot F_{1}\left(\magbounds,\fcsmoothbound,\geom\right)\cdot\startcoeff^{\alpha\left(\ord,\ord_{1}\right)}\\
\left|\nn{\jc}_{l,j}-\jc_{l,j}\right| & \leq C^{**}\left(\ord,\ord_{1},\np\right)\cdot F_{2}\left(\magbounds,\fcsmoothbound,\geom\right)\cdot\startcoeff^{\beta\left(l,\ord,\ord_{1}\right)}\\
\left|\nn{\fun}\left(x\right)-\fun\left(x\right)\right| & \leq C^{***}\left(d,\ord_{1},\np\right)\cdot F_{3}\left(\magbounds,\fcsmoothbound,\geom\right)\cdot\startcoeff^{\gamma\left(\ord,\ord_{1}\right)}\end{split}
\label{eq:desired-estimates}\end{equation}
where
\begin{itemize}
\item $C^{*},C^{**},C^{***}$ are some absolute constants depending only
on the ``size'' of the problem;
\item $\geom=\geom\left(\jp_{1},\dots,\jp_{\np}\right)$ represents the
geometry of the jump points (such as minimal distance between two
adjacent jumps);
\item $\magbounds=\magbounds\left(\left|\jc_{0,1}\right|,\dots,\left|\jc_{\ord_{1},\np}\right|\right)$
represents some a-priori bounds on the jump magnitudes, such as lower
and upper bounds;
\item $\fcsmoothbound$ is an absolute bound for the Fourier coefficients
of the smooth component $\smooth$:\begin{equation}
\left|\fc\left(\smooth\right)\right|\leq\fcsmoothbound\cdot k^{-\ord-2}\label{eq:fcsmoothbound}\end{equation}

\item $F_{1},F_{2},F_{3}$ and $\alpha,\beta,\gamma$ are some ``simple''
functions.
\end{itemize}
In the course of our investigation we shall be defining more specific
bounds, but it will always be assumed that those can be expressed
in terms of the above quantities.

Since we are interested in ``asymptotic'' estimates, we will in general
allow the inequalities \eqref{eq:desired-estimates} to hold for all
$\startcoeff$ starting from some index $K^{*}$ which may be large
and depend on the parameters of the problem. However, if a particular
bound holds for all $k>K^{*}$ then it will in general hold for $k=1,2,\dots,K^{*}$
as well, with some larger multiplicative constants $\nn{C^{*}}\dots$,
but which are harder to compute explicitly.

\section{\label{sec:single-jump-reconstruction}Resolving a single jump}

Let $\omega\isdef\ee^{-\imath\jp}$. The goal is to recover $\sing$
from the approximate system of equations\begin{equation}
\fc(\fun)\approx\frac{\omega^{k}}{2\pi}\sum_{l=0}^{\ord}\frac{\jc_{l}}{(\imath k)^{l+1}}\left(=\fc(\sing)\right)\qquad k=\startcoeff,\dots,\startcoeff+\ord+1\label{eq:singular-part-system}\end{equation}

To find $\omega$, we eliminate $\left\{ \jc_{0},\dotsc,\jc_{\ord}\right\} $
from the equations. The result is a single polynomial equation having
the exact value $\omega$ as one of its solutions. In Eckhoff's paper,
this elimination is described in great detail, while here we present
only the end result.

Let\begin{eqnarray}
\begin{split}m_{k} & \isdef2\pi(\imath k)^{d+1}\fc(\sing)=\omega^{k}\sum_{l=0}^{d}(\imath k)^{d-l}\jc_{l}\\
p_{k}^{\ord}(z) & \isdef\sum_{j=0}^{d+1}\left(-1\right)^{j}\binom{d+1}{j}m_{k+j}z^{d+1-j}\end{split}
\label{eq:mk-sequence-def}\end{eqnarray}

\[
\]

\begin{lem}
\label{lem:omega-exact-root}The point $\omega$ satisfies:\[
p_{k}^{\ord}(\omega)=0\qquad\forall k\in\naturals\]
\end{lem}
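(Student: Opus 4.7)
The plan is to substitute the explicit formula for $m_{k+j}$ directly into $p_k^{\ord}(\omega)$ and reduce the result to a classical finite-difference identity.

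First I would expand, using $m_{k+j} = \omega^{k+j}\sum_{l=0}^{d}(\imath(k+j))^{d-l}\jc_{l}$, so that
\[
p_{k}^{\ord}(\omega) \;=\; \sum_{j=0}^{d+1}(-1)^{j}\binom{d+1}{j}\,\omega^{k+j}\!\sum_{l=0}^{d}(\imath(k+j))^{d-l}\jc_{l}\cdot\omega^{d+1-j}.
\]
The crucial observation is that the powers of $\omega$ collapse: $\omega^{k+j}\cdot\omega^{d+1-j}=\omega^{k+d+1}$ is independent of $j$. I would factor this out, together with $\imath^{d-l}$ and $\jc_l$, to get
\[
p_{k}^{\ord}(\omega) \;=\; \omega^{k+d+1}\sum_{l=0}^{d}\imath^{d-l}\jc_{l}\;S_{l}(k),\qquad S_{l}(k)\;\isdef\;\sum_{j=0}^{d+1}(-1)^{j}\binom{d+1}{j}(k+j)^{d-l}.
\]

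Next I would identify $S_{l}(k)$ as (up to sign) the $(d+1)$-st forward difference of the polynomial $q_{l}(x)\isdef x^{d-l}$ evaluated at $x=k$:
\[
(-1)^{d+1}S_{l}(k)\;=\;\sum_{j=0}^{d+1}(-1)^{d+1-j}\binom{d+1}{j}q_{l}(k+j)\;=\;(\Delta^{d+1}q_{l})(k).
\]
Since $\deg q_{l} = d-l \leq d < d+1$ for every $l\in\{0,1,\dots,d\}$, the classical fact that the $(n{+}1)$-st finite difference annihilates every polynomial of degree $\leq n$ yields $\Delta^{d+1}q_{l}\equiv 0$, hence $S_{l}(k)=0$ for all $k$ and all $l$. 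Consequently every term in the outer sum over $l$ vanishes and $p_{k}^{\ord}(\omega)=0$.

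There is no genuine obstacle here; the argument is essentially bookkeeping together with the vanishing of a high-order finite difference. The one place to be careful is the clean separation of the $j$-dependence of $\omega^{k+j}$ from the $j$-dependence of $(k+j)^{d-l}$, which is precisely what makes the telescoping of the binomial sum possible and is the structural reason the elimination scheme of Eckhoff works.
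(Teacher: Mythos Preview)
Your proof is correct and follows essentially the same route as the paper: the paper invokes Lemma~\ref{lem:basic-recurrence}, whose proof is precisely the observation that $\omega^{k+j}\cdot\omega^{d+1-j}=\omega^{k+d+1}$ factors out, leaving $(-1)^{d+1}\Delta^{d+1}$ applied to a polynomial of degree $d$ in $k$. The only cosmetic difference is that you treat each monomial $(k+j)^{d-l}$ separately, whereas the paper bundles them into a single polynomial $p(k)=\sum_{l}\imath^{d-l}\jc_{l}k^{d-l}$ before applying the finite-difference annihilation.
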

\begin{proof}
The proof is an immediate consequence of \prettyref{lem:basic-recurrence}
(see \prettyref{app:difference-calculus-and-related-results}).
\end{proof}
Since the exact coefficients $m_{k}$ (and as a result the polynomials
$p_{k}^{d}\left(z\right)$) are unknown, we approximate these with
the known quantities\begin{equation}
\begin{split}r_{k} & \isdef2\pi\left(\imath k\right)^{d+1}\fc(\fun)\\
q_{k}^{d}(z) & \isdef\sum_{j=0}^{d+1}(-1)^{j}\binom{d+1}{j}r_{k+j}z^{d+1-j}\end{split}
\label{eq:q-def}\end{equation}

Now we are ready to formulate the procedure of recovering the parameters
of a single jump.
\begin{algorithm}
\label{alg:rec-single-jump}Let us be given the first $\startcoeff+\ord+2$
Fourier coefficients of the function $\fun$ which has a single jump
$\jp\in[-\pi,\pi]$.
\begin{enumerate}
\item Solve the polynomial equation\[
q_{\startcoeff}^{d}(z)=0\]
and take $\nn{\omega}$ to be the root \emph{which is closest to the
unit circle.} In \prettyref{sec:accuracy-single-jump} below, we shall
provide the justification for this choice.
\item The jump magnitudes $\jc_{0},\dotsc,\jc_{d}$ are reconstructed as
follows. By \eqref{eq:mk-sequence-def}, the exact values of ${\jc_{j}}$
satisfy \begin{equation}
m_{k}\omega^{-k}=\sum_{l=0}^{d}(\imath k)^{d-l}\jc_{l}\qquad\forall k\in\naturals\label{eq:exact-system-jump-magnitudes}\end{equation}
We use the approximations $r_{k}\approx m_{k}$, $\widetilde{\omega}\approx\omega$
and solve the system of linear equations \begin{equation}
r_{k}\widetilde{\omega}^{-k}=\sum_{l=0}^{d}(\imath k)^{d-l}\widetilde{\jc_{l}}\qquad k=\startcoeff,\dotsc,\startcoeff+\ord\label{eq:approximate-system-jump-magnitudes}\end{equation}
with respect to the unknowns $\left\{ \nn{\jc}_{l}\right\} $ by any
one of the standard methods.
\end{enumerate}
\end{algorithm}

\section{\label{sec:accuracy-single-jump}Accuracy analysis: a single jump}

Our goal in this section is to analyze \prettyref{alg:rec-single-jump}
and calculate its accuracy. We shall express all our estimates in
terms of the index $k$, keeping in mind that it should be replaced
with $\startcoeff$ to be consistent with the definitions of the previous
sections.

\subsection{Accuracy analysis: jump location}

\global\long\def\uz{\mathcal{T}}

We start with the determination of the jump point $\nn{\omega}$.
Our strategy will be to investigate the polynomial $q_{k}^{d}(z)$,
and determine the bounds on locations of its roots. We can informally
summarize the main results as follows:
\begin{enumerate}
\item Starting from some $k$, the roots of $q_{k}^{d}(z)$ are {}``separated''
from each other by at least $\sim k^{-1}$.
\item If the function $\fun$ is continuously differentiable at least $d_{1}\geq2\ord+1$
times everywhere except at $\jp$, then one of those roots deviates
from the ``true'' value $\omega$ by at most $\sim k^{-\ord-2}$.
\end{enumerate}
We regard $q_{k}^{d}(z)$ as a perturbation of $p_{k}^{d}(z)$. With
this point of view, we shall first describe the roots of $p_{k}^{d}(z)$,
and then calculate the {}``deviations'' due to the difference\[
e_{k}^{d}(z)=q_{k}^{d}(z)-p_{k}^{d}(z)\]

In the subsequent analysis we denote the roots of $p_{k}^{d}(z)$
by $z_{i}^{(k,d)}$ for $i=0,1,\dots,\ord$, with the convention that
$z_{0}^{(k,d)}=\omega$. Also, we denote the roots of $q_{k}^{d}(z)$
by $\kappa_{i}^{(k,d)}$.

It will be convenient to study $p_{k}^{d}\left(z\right)$ in a different
coordinate system. For this purpose, consider the following transformation
of the punctured $z$-plane:\[
u=\uz\left(z\right)=\frac{\omega}{z}-1\qquad z\neq0\]
Then the inverse map is given by

\begin{equation}
z=\uz^{-1}\left(u\right)=\frac{\omega}{u+1}\qquad u\neq-1\label{eq:u-z-map}\end{equation}
Now we translate the problem into the $u$-plane.
\begin{defn}
For all $k,d\in\naturals$ let\begin{equation}
s_{k}^{d}(u)\isdef\frac{p_{k}^{d}(z)}{\omega^{k}z^{\ord+1}}=\frac{(u+1)^{d+1}}{\omega^{k+d+1}}p_{k}^{d}\left(\frac{\omega}{u+1}\right)\label{eq:eliminant-shifted}\end{equation}
\end{defn}
\begin{claim}
\label{clm:skd-pkd-corr}$s_{k}^{d}(u)$ is a polynomial function.
Furthermore, if $u_{0}\neq-1$ is a root of $s_{k}^{d}(u)$, then
$z_{0}=\uz^{-1}\left(u_{0}\right)$ is a root of $p_{k}^{d}(u)$.
\end{claim}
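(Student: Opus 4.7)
The claim has two parts: showing $s_{k}^{d}(u)$ is a polynomial, and establishing the root correspondence. Both parts follow from direct computation, so the plan is essentially to expand the defining expression and read off the required properties.

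First I would substitute $z = \omega/(u+1)$ into the explicit formula
\[
p_{k}^{d}(z) = \sum_{j=0}^{d+1}(-1)^{j}\binom{d+1}{j}m_{k+j}z^{d+1-j}
\]
from \eqref{eq:mk-sequence-def}. This gives
\[
p_{k}^{d}\left(\frac{\omega}{u+1}\right) = \sum_{j=0}^{d+1}(-1)^{j}\binom{d+1}{j}m_{k+j}\frac{\omega^{d+1-j}}{(u+1)^{d+1-j}}.
\]
Multiplying through by $(u+1)^{d+1}/\omega^{k+d+1}$, the denominator $(u+1)^{d+1-j}$ is exactly cancelled, leaving
\[
s_{k}^{d}(u) = \sum_{j=0}^{d+1}(-1)^{j}\binom{d+1}{j}\frac{m_{k+j}}{\omega^{k+j}}(u+1)^{j},
\]
which is manifestly a polynomial in $u$ of degree at most $d+1$. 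Note that $m_{k+j}/\omega^{k+j} = \sum_{l=0}^{d}(\imath(k+j))^{d-l}\jc_{l}$ is purely a polynomial in $k+j$, so all coefficients are well defined.

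For the root correspondence, suppose $u_{0} \neq -1$ is a root of $s_{k}^{d}$. Then $z_{0} = \uz^{-1}(u_{0}) = \omega/(u_{0}+1)$ is well defined and nonzero (since $\omega = \ee^{-\imath\jp}$ lies on the unit circle). From the second equality in \eqref{eq:eliminant-shifted},
\[
0 = s_{k}^{d}(u_{0}) = \frac{(u_{0}+1)^{d+1}}{\omega^{k+d+1}}\, p_{k}^{d}(z_{0}).
\]
The prefactor $(u_{0}+1)^{d+1}/\omega^{k+d+1}$ is nonzero by the assumption $u_{0}\neq -1$ and $|\omega|=1$, so $p_{k}^{d}(z_{0}) = 0$, as required.

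There is no real obstacle here; the only subtlety is keeping track of which factors are nonzero so that one can legitimately divide. The claim is essentially a statement that the map $z \mapsto u = \omega/z - 1$ conjugates $p_{k}^{d}(z)$, up to a nonvanishing multiplicative factor on the punctured plane, to a polynomial in $u$. This normalization is what allows the subsequent root-perturbation analysis to be carried out in the $u$-plane, where $\omega$ corresponds to $u = 0$ and the behavior of the other roots is presumably easier to track.
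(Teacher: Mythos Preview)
Your proof is correct and matches the paper's approach. The paper does not give a separate proof of this claim, treating it as immediate from the definition \eqref{eq:eliminant-shifted}; the polynomial nature of $s_{k}^{d}(u)$ is made explicit in the simplification that immediately follows the claim, which is exactly the computation you carry out, and the root correspondence is read off from the same factorization you use.
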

Therefore it makes sense to study the roots of $s_{k}^{d}(u)$. We
denote these roots by $\sigma_{i}^{(k,d)},i=0,\dots,\ord$. The observation
below is an immediate consequence of \prettyref{lem:omega-exact-root}.
\begin{claim}
$s_{k}^{d}(0)=0$
\end{claim}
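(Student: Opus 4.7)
The plan is essentially a one-line unwinding of definitions together with an appeal to \prettyref{lem:omega-exact-root}. The key conceptual point is that the coordinate change $z = \uz^{-1}(u) = \omega/(u+1)$ sends $u = 0$ to $z = \omega$. Hence the assertion ``$u=0$ is a root of $s_k^d$'' is literally a rephrasing of ``$z=\omega$ is a root of $p_k^d$'', which is precisely the content of \prettyref{lem:omega-exact-root}.

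Concretely, I would just substitute $u=0$ into the defining identity \eqref{eq:eliminant-shifted}. This yields
\[
s_k^d(0) \;=\; \frac{(0+1)^{d+1}}{\omega^{k+d+1}}\, p_k^d\!\left(\frac{\omega}{0+1}\right) \;=\; \frac{1}{\omega^{k+d+1}}\, p_k^d(\omega).
\]
Since $\omega = \ee^{-\imath\jp}$ lies on the unit circle, the prefactor $\omega^{-(k+d+1)}$ is nonzero and well-defined, so the vanishing of $s_k^d(0)$ is equivalent to the vanishing of $p_k^d(\omega)$. But \prettyref{lem:omega-exact-root} gives $p_k^d(\omega) = 0$ for every $k \in \naturals$, and the claim follows.

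There is no real obstacle here, and no case distinction is needed (the value $u = 0 \neq -1$ is in the domain of $\uz^{-1}$, so evaluating \eqref{eq:eliminant-shifted} at $u=0$ is legitimate without having to separately invoke the polynomial extension asserted in \prettyref{clm:skd-pkd-corr}). The only thing worth flagging is the bookkeeping of the powers of $\omega$, so that one can verify that the factor sitting in front of $p_k^d(\omega)$ is manifestly nonzero; this is automatic from $|\omega| = 1$.
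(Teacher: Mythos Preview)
Your proposal is correct and matches the paper's own argument: the paper simply states that the claim is an immediate consequence of \prettyref{lem:omega-exact-root}, and you have spelled out exactly that one-line deduction by plugging $u=0$ into \eqref{eq:eliminant-shifted}.
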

Therefore we will always take $\sigma_{0}^{(k,d)}=0.$

In what follows, we shall break $s_{k}^{d}(u)$ into a sum of terms
and subsequently apply a perturbation analysis to determine its roots.
We begin with some simplifications: \[
\begin{split}s_{k}^{d}(u) & =\frac{(u+1)^{d+1}}{\omega^{k+d+1}}\sum_{j=0}^{d+1}\left(-1\right)^{j}\binom{d+1}{j}\underbrace{\left\{ \omega^{k+j}\sum_{l=0}^{d}\left(\imath(k+j)\right)^{d-l}\jc_{l}\right\} }_{=m_{k+j}}\frac{\omega^{d+1-j}}{(u+1)^{d+1-j}}\\
 & =\sum_{j=0}^{d+1}\left(-1\right)^{j}\binom{d+1}{j}\left(u+1\right)^{j}\sum_{l=0}^{d}\left(\imath(k+j)\right)^{d-l}\jc_{l}=\sum_{l=0}^{d}\imath^{d-l}\jc_{l}\sum_{j=0}^{d+1}\left(-1\right)^{j}\binom{d+1}{j}\left(u+1\right)^{j}\left(k+j\right)^{d-l}\end{split}
\]
Now substitute the binomial expansions\[
\begin{split}(k+j)^{d-l} & =\sum_{m=0}^{d-l}k^{m}j^{d-l-m}\binom{d-l}{m}\\
(u+1)^{j} & =\sum_{s=0}^{j}u^{s}\binom{j}{s}\end{split}
\]
and get\[
\begin{split}s_{k}^{d}(u) & =\end{split}
\sum_{l=0}^{d}\imath^{d-l}\jc_{l}\sum_{m=0}^{d-l}k^{m}\sum_{j=0}^{d+1}\left(-1\right)^{j}\binom{d+1}{j}\sum_{s=0}^{j}u^{s}\binom{j}{s}j^{d-l-m}\binom{d-l}{m}\]
Now we make a change in indexing according to the following scheme:\[
\sum_{j=0}^{\ord+1}\sum_{s\-}^{j}=\sum_{s=0}^{\ord+1}\sum_{j=s}^{\ord+1}\qquad\sum_{l=0}^{\ord}\sum_{m=0}^{\ord-1}=\sum_{m=0}^{\ord}\sum_{l=0}^{\ord-m}\]
and continue:\[
\begin{split}s_{k}^{d}(u) & =\sum_{s=0}^{d+1}u^{s}\sum_{m=0}^{d}k^{m}\sum_{l=0}^{d-m}\binom{d-l}{m}\imath^{d-l}\jc_{l}\sum_{j=s}^{d+1}(-1)^{j}\binom{j}{s}\binom{d+1}{j}j^{d-l-m}\end{split}
\]

\begin{defn}
For all integers $t,s$ with $0\leq t\leq d$ and $0\leq s\leq d+1$
let \[
\dimdim(d,t,s)\isdef\sum_{j=s}^{d+1}(-1)^{j}\binom{j}{s}\binom{d+1}{j}j^{d-t}\]

\end{defn}
With this definition, we can write\[
s_{k}^{d}(u)=\sum_{s=0}^{d+1}u^{s}\sum_{m=0}^{d}k^{m}\sum_{l=0}^{d-m}\binom{d-l}{m}\imath^{d-l}\jc_{l}\cdot\dimdim(d,m+l,s)\]

We will need a technical result.
\begin{lem*}[\ref{lem:dimdim-special-values}]
For all $0\leq s\leq d+1$
\begin{enumerate}
\item If $m+l\geq s$ then $\dimdim(d,m+l,s)=0$
\item $\dimdim(d,s-1,s)=(-1)^{d+1}(d+1-s)!\binom{d+1}{s}$
\end{enumerate}
\end{lem*}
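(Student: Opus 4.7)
The plan is to rewrite $\dimdim(d,t,s)$ so that the inner sum becomes a classical finite difference of a power, from which both statements fall out immediately.

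First, I would apply the trinomial identity $\binom{j}{s}\binom{d+1}{j}=\binom{d+1}{s}\binom{d+1-s}{j-s}$ to pull $\binom{d+1}{s}$ outside, and then substitute $i=j-s$. This converts the defining sum into
\[
\dimdim(d,t,s)=(-1)^{s}\binom{d+1}{s}\sum_{i=0}^{N}(-1)^{i}\binom{N}{i}(i+s)^{d-t},\qquad N:=d+1-s.
\]
The inner sum is, up to sign, the $N$-th forward difference $\Delta^{N}P(0)$ of the polynomial $P(x)=(x+s)^{d-t}$ evaluated at $0$, since $\Delta^{N}P(0)=\sum_{i=0}^{N}(-1)^{N-i}\binom{N}{i}P(i)$.

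Now I would invoke the standard fact from the calculus of finite differences: $\Delta^{N}P\equiv 0$ whenever $\deg P<N$, and $\Delta^{N}P\equiv N!\,a_{N}$ when $\deg P=N$ with leading coefficient $a_{N}$. In our case $\deg P=d-t$ with leading coefficient $1$. Part (1) corresponds to $t=m+l\geq s$, in which case $\deg P=d-t\leq d-s<N$, so the inner sum vanishes, giving $\dimdim(d,m+l,s)=0$. Part (2) corresponds to $t=s-1$, where $\deg P=d-s+1=N$, so the inner sum equals $(-1)^{N}\cdot N!=(-1)^{d+1-s}(d+1-s)!$. Combining with the prefactor $(-1)^{s}\binom{d+1}{s}$ yields exactly $(-1)^{d+1}(d+1-s)!\binom{d+1}{s}$.

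There is no real obstacle here; the only thing to watch is the careful bookkeeping of signs and the range of summation after the change of variable $i=j-s$ (in particular noting that the terms for $j<s$ in the original sum contribute nothing since $\binom{j}{s}=0$ there, which makes the re-indexing clean). If one prefers a self-contained argument without invoking finite differences, the same two facts can be extracted by expanding $(1-x)^{N}=\sum_{i=0}^{N}(-1)^{i}\binom{N}{i}x^{i}$ and applying the operator $\left(x\frac{d}{dx}\right)^{d-t}$ followed by evaluation at $x=1$, but the finite-difference formulation is the most economical.
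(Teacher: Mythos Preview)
Your proof is correct and follows essentially the same route as the paper: reindex via $i=j-s$, use the identity $\binom{j}{s}\binom{d+1}{j}=\binom{d+1}{s}\binom{d+1-s}{j-s}$ to factor out $\binom{d+1}{s}$, and then recognize the remaining sum as (up to sign) the $(d+1-s)$-th forward difference of the polynomial $x\mapsto (x+s)^{d-t}$, to which the standard vanishing/leading-coefficient facts apply. The only cosmetic difference is that the paper expands the binomials into factorials instead of naming the trinomial identity, and uses the variables $\alpha=d-s$, $\beta=d-t$; the logical content is identical.
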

\begin{proof}
See \prettyref{app:difference-calculus-and-related-results}.
\end{proof}
The polynomial $s_{k}^{d}(u)$ must therefore be of the form \[
s_{k}^{d}(u)=\sum_{s=1}^{d+1}\left(a_{0,s}+a_{1,s}k+\dots+a_{s-1,s}k^{s-1}\right)u^{s}\]
where in particular\begin{equation}
a_{s-1,s}=\binom{d}{s-1}\imath^{d}\jc_{0}(-1)^{d+1}(d+1-s)!\binom{d+1}{s}\label{eq:first-perturbation-poly-coeffs}\end{equation}

We break up the polynomial $s_{k}^{d}(u)$ into a ``dominant'' and
a ``perturbation'' part: $s_{k}^{d}(u)=\nn{s_{k}^{d}}(u)+h_{k}^{d}(u)$
where\begin{equation}
\begin{split}\nn{s_{k}^{d}}(u) & \isdef\sum_{s=1}^{d+1}a_{s-1,s}k^{s-1}u^{s}\\
h_{k}^{d}(u) & \isdef\sum_{s=2}^{d+1}\left(a_{0,s}+a_{1,s}k+\dots+a_{s-2,s}k^{s-2}\right)u^{s}\end{split}
\label{eq:first-perturbation-decomposition}\end{equation}

Next we shall see that the dominant component $\nn{s_{k}^{d}}(u)$
determines the locations of the roots of $s_{k}^{d}(u)$ up to the
first order accuracy, while the other component $h_{k}^{d}(u)$ is
responsible for second-order perturbations of these roots.

We denote the roots of $\nn s_{k}^{d}(u)$ by $\nn{\sigma}_{i}^{(k,d)},i=0,\dots,\ord$
with $\nn{\sigma}_{0}^{(k,d)}=0$.

It turns out that $\nn{s_{k}^{d}}(u)$ can be completely characterized.
\begin{defn}
\label{def:laguerre}For every $\alpha>-1$ and $n=0,1,2,\dots$ let
$\lag{\alpha}n(x)$ denote the generalized Laguerre polynomial (\cite[Chapter 22]{abramowitz1965handbook},
\cite[Chapter 5.2]{szegHo1975op}):\[
\lag{\alpha}n(x)=\sum_{m=0}^{n}\binom{n+\alpha}{n-m}\frac{(-x)^{m}}{m!}\]
\end{defn}
\begin{lem}
With the above notations:
\begin{enumerate}
\item The polynomial $\nn{s_{k}^{d}}(u)$ satisfies\begin{equation}
\nn{s_{k}^{d}}(u)=\frac{1}{k}\nn{s_{1}^{d}}(ku)\label{eq:dominant-component-scaling}\end{equation}

\item Furthermore,\[
\nn{s_{1}^{d}}(u)=-(-\imath)^{d}\jc_{0}(d+1)!\lag{-1}{d+1}(-u)\]

\end{enumerate}
\end{lem}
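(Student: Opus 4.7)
My plan is to prove both assertions by direct computation from the explicit formula \eqref{eq:first-perturbation-decomposition} for $\nn{s_k^d}(u)$ together with the coefficient formula \eqref{eq:first-perturbation-poly-coeffs}. No preparatory lemmas are needed beyond the definition of $\lag{-1}{d+1}$ and elementary binomial identities, so the whole argument is a bookkeeping calculation.

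For the scaling identity \eqref{eq:dominant-component-scaling}, I would simply substitute $ku$ for $u$ in the definition of $\nn{s_1^d}$ to get
\[
\nn{s_1^d}(ku)\;=\;\sum_{s=1}^{d+1}a_{s-1,s}\,(ku)^{s}\;=\;\sum_{s=1}^{d+1}a_{s-1,s}\,k^{s}\,u^{s},
\]
and dividing by $k$ reproduces the definition of $\nn{s_k^d}(u)$ term by term. This is purely formal and requires no insight.

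For the Laguerre identification, I would first simplify $a_{s-1,s}$ using $(d+1-s)!\,\binom{d+1}{s}=\frac{(d+1)!}{s!}$, obtaining
\[
\nn{s_1^d}(u)\;=\;(-1)^{d+1}\imath^{d}\jc_{0}\,(d+1)!\sum_{s=1}^{d+1}\binom{d}{s-1}\frac{u^{s}}{s!}.
\]
Then from Definition \ref{def:laguerre} with $\alpha=-1$ and $n=d+1$,
\[
\lag{-1}{d+1}(-u)\;=\;\sum_{m=0}^{d+1}\binom{d}{d+1-m}\frac{u^{m}}{m!}\;=\;\sum_{m=1}^{d+1}\binom{d}{m-1}\frac{u^{m}}{m!},
\]
where the $m=0$ term drops out because $\binom{d}{d+1}=0$ and I used the symmetry $\binom{d}{d+1-m}=\binom{d}{m-1}$. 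Multiplying this identity by $-(-\imath)^{d}\jc_{0}(d+1)!$ and using the sign identity $\imath^{d}(-1)^{d+1}=-(-\imath)^{d}$ recovers exactly the expression derived above for $\nn{s_1^d}(u)$.

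The only mild obstacle is the sign and phase bookkeeping ($\imath^{d}$ versus $(-\imath)^{d}$, and the extra minus sign in front of the Laguerre polynomial); once those are tracked carefully, the proof is a one-line matching of coefficients. No estimates, no asymptotics, and no use of the difference-calculus lemmas from the appendix are required.
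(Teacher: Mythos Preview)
Your proposal is correct and follows essentially the same approach as the paper's own proof: both parts are handled by direct substitution into the definitions \eqref{eq:first-perturbation-decomposition} and \eqref{eq:first-perturbation-poly-coeffs}, followed by matching against Definition~\ref{def:laguerre}. The only cosmetic difference is that the paper writes the binomial coefficient as $\binom{d}{d+1-s}$ and identifies the Laguerre sum directly, whereas you first pass to the symmetric form $\binom{d}{s-1}$; the sign identity $\imath^{d}(-1)^{d+1}=-(-\imath)^{d}$ you noted is exactly the step the paper leaves implicit.
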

\begin{proof}
The first part follows from \eqref{eq:first-perturbation-decomposition}:\[
\begin{split}k\cdot\nn{s_{k}^{d}}(u) & =k\cdot\sum_{s=1}^{d+1}a_{s-1,s}k^{s-1}u^{s}=\sum_{s=1}^{d+1}a_{s-1,s}\left(ku\right)^{s}=\nn s_{1}^{d}(ku)\end{split}
\]

To prove the second part, we substitute the expression \eqref{eq:first-perturbation-poly-coeffs}
into \eqref{eq:first-perturbation-decomposition}:\[
\begin{split}\nn{s_{1}^{d}}(u) & =\sum_{s=1}^{d+1}a_{s-1,s}u^{s}=\sum_{s=1}^{d+1}\binom{d}{s-1}\imath^{d}\jc_{0}(-1)^{d+1}(d+1-s)!\binom{d+1}{s}u^{s}\\
 & =-(-\imath)^{d}\jc_{0}(d+1)!\sum_{s=1}^{d+1}\binom{d}{d+1-s}\frac{u^{s}}{s!}=-(-\imath)^{d}\jc_{0}(d+1)!\lag{-1}{d+1}(-u)\qedhere\end{split}
\]
\end{proof}
\begin{cor}
\label{cor:skd-laguerre-roots-correspondence}For all $k\in\naturals$,
$\nn{s_{k}^{d}}(u^{*})=0$ if and only if $\lag{-1}{d+1}(-ku^{*})=0$.\end{cor}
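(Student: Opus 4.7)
The plan is to observe that the corollary is an immediate substitution: simply combine parts (1) and (2) of the preceding lemma. Explicitly, part (1) gives $\widetilde{s_k^d}(u)=\tfrac{1}{k}\widetilde{s_1^d}(ku)$, and substituting the closed form from part (2) yields
\[
\widetilde{s_k^d}(u)\;=\;-\frac{(-\imath)^{d}\jc_0 (d+1)!}{k}\,\lag{-1}{d+1}(-ku).
\]
Thus $\widetilde{s_k^d}$ is a nonzero scalar multiple of $u\mapsto \lag{-1}{d+1}(-ku)$, so the zero sets coincide.

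The only substantive point to address is that the prefactor $-\frac{(-\imath)^{d}\jc_0(d+1)!}{k}$ does not vanish. Since $k\in\naturals$ we have $k\geq 1$, and $(d+1)!$ and $(-\imath)^d$ are obviously nonzero; the remaining factor $\jc_0$ is the leading jump magnitude, which is nonzero by the standing hypothesis that $\jp$ is an actual jump discontinuity of $\fun$ (otherwise there is nothing to resolve). I would include a brief sentence noting this so the equivalence is truly an iff.

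The only potential obstacle I see is a pitfall rather than a genuine difficulty: one must be careful that the map $u^*\mapsto -ku^*$ is a bijection of $\complexfield$, so the correspondence of roots is one-to-one with multiplicities preserved, and in particular $u^*=0$ on the left corresponds to the known root $-ku^*=0$ of $\lag{-1}{d+1}$ on the right (recall $\lag{-1}{d+1}(0)=0$ since the $m=0$ term in \prettyref{def:laguerre} carries a factor $\binom{d}{d+1}=0$). This consistency check confirms that the distinguished root $\sigma_0^{(k,d)}=0=\widetilde{\sigma}_0^{(k,d)}$ corresponds under the identification, which is what will be used in the subsequent perturbation analysis.
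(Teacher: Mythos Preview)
Your proposal is correct and matches the paper's treatment: the corollary is stated without proof as an immediate consequence of combining parts (1) and (2) of the preceding lemma, exactly as you do. Your remark that the scalar prefactor requires $\jc_0\neq 0$ is a worthwhile observation that the paper leaves implicit here (it is made explicit later via the a-priori bound $\jmin\leq|\jc_{0,j}|$).
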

\begin{lem}
\label{lem:skd-roots-characterization} The numbers $\left\{ \nn{\sigma}_{i}^{(k,d)}\right\} $
satisfy the following properties:
\begin{enumerate}
\item \label{enu:lag-p1}each $\nn{\sigma}_{i}^{(k,d)}$ is a simple root
of $\nn s_{k}^{d}\left(u\right)$;
\item \label{enu:lag-p2}$\nn{\sigma}_{i}^{(k,d)}<0$ for $i=1,2,\dots,\ord$;
\item \label{enu:lag-p3}there exist constants $\Cl{s-roots-lower},\Cl{s-roots-upper}$
such that for every $k\in\naturals$ and $0\leq i<j\leq\ord$ \begin{equation}
\Cr{s-roots-lower}k^{-1}\leq\left|\nn{\sigma}_{i}^{(k,d)}-\nn{\sigma}_{j}^{(k,d)}\right|\leq\Cr{s-roots-upper}k^{-1}\label{eq:sroots-bounds}\end{equation}

\end{enumerate}
\end{lem}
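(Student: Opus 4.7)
The plan is to reduce everything to classical facts about the generalized Laguerre polynomials, invoked through Corollary~\ref{cor:skd-laguerre-roots-correspondence}. First I would observe that the scaling identity \eqref{eq:dominant-component-scaling} together with the Laguerre correspondence implies that the nonzero roots of $\nn{s_k^d}(u)$ are precisely the images under $u \mapsto -u/k$ of the nonzero roots of $\lag{-1}{d+1}(x)$. Indeed, if $y$ is a root of $\lag{-1}{d+1}$, then $u = -y/k$ is a root of $\nn{s_k^d}$ by Corollary~\ref{cor:skd-laguerre-roots-correspondence}, and since $\nn{s_k^d}$ has degree $d+1$ (the coefficient $a_{d,d+1}$ in \eqref{eq:first-perturbation-poly-coeffs} is nonzero because $\jc_0 \neq 0$), a counting argument shows this accounts for all roots.

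The core observation is the identity
\[
\lag{-1}{d+1}(x) = -\frac{x}{d+1}\,\lag{1}{d}(x),
\]
which follows by direct comparison of the explicit binomial expansions in \prettyref{def:laguerre}. Classical Sturm-Liouville / orthogonality theory for the Laguerre polynomials with parameter $\alpha = 1 > -1$ (see e.g.\ \cite[Chapter 5.2]{szegHo1975op}) then guarantees that $\lag{1}{d}(x)$ has exactly $d$ simple, strictly positive real roots, which we label $0 < y_1 < y_2 < \dots < y_d$. Consequently the full set of roots of $\lag{-1}{d+1}$ is $\{0, y_1, \dots, y_d\}$, all simple and non-negative.

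From this, all three claims of the lemma fall out immediately. Setting $\nn{\sigma}_0^{(k,d)} = 0$ and $\nn{\sigma}_i^{(k,d)} = -y_i / k$ for $i = 1, \dots, d$: part \eqref{enu:lag-p1} holds because the $y_i$ are distinct and the map $u \mapsto -u/k$ is a bijection, so the roots of $\nn{s_k^d}(u)$ remain simple; part \eqref{enu:lag-p2} holds because $y_i > 0$ for $i \geq 1$; and for part \eqref{enu:lag-p3} one directly computes
\[
\bigl|\nn{\sigma}_i^{(k,d)} - \nn{\sigma}_j^{(k,d)}\bigr| = \frac{|y_i - y_j|}{k},
\]
so one may take $\Cr{s-roots-lower} = \min_{0 \leq i < j \leq d} |y_i - y_j|$ and $\Cr{s-roots-upper} = y_d$ (both positive and finite constants depending only on $d$).

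There is really no serious obstacle here: the only mildly nontrivial step is verifying the algebraic identity $\lag{-1}{d+1}(x) = -\frac{x}{d+1} \lag{1}{d}(x)$ from the series definition, and this is a routine index-shift calculation. The conceptual content of the lemma is entirely encapsulated in the Laguerre correspondence established in Corollary~\ref{cor:skd-laguerre-roots-correspondence}; the present statement is just its geometric interpretation after the change of variables $u = \uz(z)$.
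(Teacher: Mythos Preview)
Your proof is correct and follows essentially the same approach as the paper's: both reduce via Corollary~\ref{cor:skd-laguerre-roots-correspondence} to the factorization $\lag{-1}{d+1}(x) = -\frac{x}{d+1}\lag{1}{d}(x)$, then invoke the classical fact that the orthogonal polynomial $\lag{1}{d}$ has $d$ simple positive roots, and read off all three claims from the scaling $u \mapsto -y/k$. Your version is in fact slightly more explicit about the constants (the paper simply takes $\Cr{s-roots-lower}$ and $\Cr{s-roots-upper}$ to be the minimal and maximal pairwise distances among the roots of $\lag{1}{d}$, which is the same thing once one remembers to include the root at $0$).
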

\begin{proof}
Following \prettyref{cor:skd-laguerre-roots-correspondence}, we only
need to characterize the roots of $\lag{-1}{d+1}(-u)$. By \cite[Chapter 5.2]{szegHo1975op},
for every integer $m\geq1$\[
\lag{-m}n(x)=(-x)^{m}\frac{(n-m)!}{n!}\lag m{n-k}(x)\]
and therefore\[
\lag{-1}{d+1}(-u)=u\frac{d!}{(d+1)!}\lag 1d(-u)\]
The polynomials $\left\{ \lag 1n(x)\right\} _{n=0}^{\infty}$ form
an orthogonal system on the interval $\left(0,\infty\right)$ (see
again \cite[Chapter 22]{abramowitz1965handbook}, \cite[Chapter 5.2]{szegHo1975op}).
Parts \eqref{enu:lag-p1} and \eqref{enu:lag-p2} follow immediately.
Part \eqref{enu:lag-p3} follows by taking $\Cr{s-roots-lower}$ and
$\Cr{s-roots-upper}$ to be the minimal and the maximal distance between
the roots of $\lag 1d(u)$, correspondingly.
\end{proof}
Now we show that $h_{k}^{d}(u)$ perturbes the zeros of $\nn{s_{k}^{d}}(u)$
by at most $\sim k^{-2}$. Since the coefficients $h_{k}^{d}\left(u\right)$
depend linearly on $\jc_{0}\dots,\jc_{\ord}$, we can expect that
the bound will depend on the quantity $\sum_{l=0}^{\ord}\left|\jc_{l}\right|$.
For convenience, let us therefore define\[
\jcbound\isdef\max\left(1,\sum_{l=0}^{\ord}\left|\jc_{l}\right|\right)\]

\begin{lem}
\label{lem:skd-roots-deviation}There exist constants $\Cl{skd-deviation},\Cl[kk]{kk-skd-deviation}$
such that for all $k>\Cr{kk-skd-deviation}\jcbound$ and for all $i=0,\dots,\ord$\begin{equation}
\left|\nn{\sigma}_{i}^{(k,d)}-\sigma_{i}^{(k,d)}\right|\leq\Cr{skd-deviation}\jcbound k^{-2}\label{eq:skd-roots-deviation}\end{equation}
\end{lem}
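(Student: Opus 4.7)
The plan is to rescale by $v=ku$, which makes the dominant part $k$-independent and turns the perturbation $h_k^d$ into a uniformly $O(k^{-1})$ quantity in the $v$-plane, and then apply Rouch\'e's theorem on disks of radius $\sim \jcbound/k$ around each unperturbed root.

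First, using \eqref{eq:dominant-component-scaling}, the roots of the rescaled polynomial $\nn{s_1^d}(v):=k\cdot\nn{s_k^d}(v/k)$ are fixed numbers $\nu_i$ satisfying $\nn{\sigma}_i^{(k,d)}=\nu_i/k$. By \prettyref{lem:skd-roots-characterization} the $\nu_i$ are simple, lie in some bounded disk $|v|\leq\rho(d)$, and are pairwise separated by a constant $\delta(d)>0$. Substituting $u=v/k$ into \eqref{eq:first-perturbation-decomposition} gives
\[
k\cdot h_k^d(v/k)\;=\;\sum_{s=2}^{d+1}\sum_{j=0}^{s-2} a_{j,s}\,k^{\,j-s+1}\,v^s,
\]
and since every exponent $j-s+1$ is $\leq -1$ and the $a_{j,s}$ are linear combinations of $\jc_0,\dots,\jc_d$ with coefficients depending only on $d$, this yields a uniform bound $|k\cdot h_k^d(v/k)|\leq C_1(d)\,\jcbound\,(1+|v|)^{d+1}\,k^{-1}$.

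Then I would apply Rouch\'e's theorem to $\nn{s_1^d}(v)+k\cdot h_k^d(v/k)$ on the disk $|v-\nu_i|\leq r$ with $r:=C_2\jcbound/k$. Because $\nu_i$ is a simple root of $\nn{s_1^d}$ at distance $\geq\delta$ from any other root, a linear lower bound $|\nn{s_1^d}(v)|\geq c_3(d)\,|v-\nu_i|$ holds on the subdisk $|v-\nu_i|\leq\delta/2$, so on the circle $|v-\nu_i|=r$ one has $|\nn{s_1^d}(v)|\geq c_3 C_2\jcbound/k$. Choosing $C_2>C_1(1+\rho+\delta/2)^{d+1}/c_3$ and restricting to $k>\Cr{kk-skd-deviation}\jcbound$ for a sufficiently large threshold (large enough to guarantee $r<\delta/2$), Rouch\'e yields exactly one root $v_i^\star$ of the perturbed polynomial in this disk. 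Undoing the rescaling gives $|\sigma_i^{(k,d)}-\nn{\sigma}_i^{(k,d)}|=|v_i^\star-\nu_i|/k\leq\Cr{skd-deviation}\jcbound\,k^{-2}$ with $\Cr{skd-deviation}:=C_2$.

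The main subtlety will be balancing two competing constraints on the disk radius: $r$ must be \emph{at least} of order $\jcbound/k$ so that $|\nn{s_1^d}|$ dominates the perturbation on the boundary, yet \emph{at most} $\sim\delta$ so that the linear lower bound on $|\nn{s_1^d}|$ near its simple root is valid. Both requirements can be met once $k\gg\jcbound$, which is precisely where the threshold $\Cr{kk-skd-deviation}\jcbound$ enters. A cruder Rouch\'e application with a $k$-independent disk radius would only yield an $O(k^{-1})$ estimate; the finer choice $r\sim k^{-1}$ is what produces the sharp $k^{-2}$ rate.
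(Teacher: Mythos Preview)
Your proposal is correct and follows essentially the same strategy as the paper's proof: both arguments decompose $s_k^d=\nn{s_k^d}+h_k^d$, exploit the scaling relation \eqref{eq:dominant-component-scaling}, and apply Rouch\'e's theorem on disks whose radius in the $u$-plane is $\sim\jcbound k^{-2}$. The only difference is organizational: you perform the change of variable $v=ku$ at the outset, so that the dominant part becomes the fixed polynomial $\nn{s_1^d}$ and the perturbation is visibly $O(\jcbound/k)$; the paper instead stays in the $u$-variable and invokes the scaling identity when bounding $|\frac{d}{du}\nn{s_k^d}|$ and $|\frac{d^2}{du^2}\nn{s_k^d}|$ (via its auxiliary \prettyref{lem:lower-bound-values-circle-via-derivatives}) to obtain the same linear lower bound near each root. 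Your explicit rescaling is arguably the cleaner bookkeeping, but the underlying estimate and the role of the threshold $k>\Cr{kk-skd-deviation}\jcbound$ are identical.
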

\begin{proof}
Our method of proof is based on Rouche's theorem (\prettyref{thm:rouche}).
We shall define a sequence $\rho(k)=\Cr{skd-deviation}\jcbound k^{-2}$
(where $\Cr{skd-deviation}$ is to be determined) and consider disks
of radius $\rho(k)$ around each one of the roots $\nn{\sigma}_{i}^{(k,d)}$.
Our goal is to find $\Cr{skd-deviation}$ so that $\left|\nn{s_{k}^{d}}(u_{\theta})\right|>\left|h_{k}^{d}\left(u_{\theta}\right)\right|$
for all points $u_{\theta}=\nn{\sigma}_{i}^{(k,d)}+\rho(k)\ee^{\imath\theta}$
on the boundaries of these disks.
\begin{itemize}
\item In order to bound $\left|\nn{s_{k}^{d}}(u_{\theta})\right|$ from
below, we shall use \prettyref{lem:lower-bound-values-circle-via-derivatives}.
We need to bound from below the first derivative at $\nn{\sigma}_{i}^{(k,d)}$,
as well as to bound from above the second derivative in the disk $B_{k^{-1}}\left(\nn{\sigma}_{i}^{(k,d)}\right)$.

\begin{enumerate}
\item We always have\[
\frac{\dd}{\dd u}\nn{s_{k}^{d}}(u)\Big|_{u=\nn{\sigma}_{i}^{(k,d)}}=\frac{\dd}{\dd u}\left(\frac{1}{k}\nn{s_{1}^{d}}(ku)\right)\Big|_{u=\nn{\sigma}_{i}^{(k,d)}}=\frac{\dd\nn s_{1}^{d}(w)}{\dd w}\Big|_{w=k\nn{\sigma}_{i}^{(k,d)}}\]
Now $k\nn{\sigma}_{i}^{(k,d)}$ is always a root of $\nn s_{1}^{d}(u)$,
therefore the value of $\frac{\dd}{\dd u}\nn{s_{k}^{d}}(u)\Big|_{u=\nn{\sigma}_{i}^{(k,d)}}$
is independent of $k$ and thus we can write\[
\left|\frac{\dd}{\dd u}\nn{s_{k}^{d}}(u)\Big|_{u=\nn{\sigma}_{i}^{(k,d)}}\right|\geq\Cl{uniform-bound-first-der}\isdef\min_{i}\left|\frac{\dd}{\dd u}\nn s_{1}^{d}(u)\Big|_{u=\nn{\sigma}_{i}^{(1,d)}}\right|\]
Since all the roots are simple, this is guaranteed to be a strictly
positive bound.
\item Now consider a point $u^{*}\in B_{k^{-1}}\left(\nn{\sigma}_{i}^{(k,d)}\right)$.
Then $\left|ku^{*}-k\nn{\sigma}_{i}^{(k,d)}\right|\leq1$ and therefore
$ku^{*}\in B_{1}\left(\nn{\sigma}_{i}^{(1,d)}\right)$. Using \eqref{eq:dominant-component-scaling}
and differentiating twice, we get\[
\frac{\dd^{2}}{\dd u^{2}}\nn s_{k}^{d}\left(u\right)\Big|_{u=u^{*}}=k\frac{\dd^{2}}{\dd w^{2}}\nn s_{1}^{d}\left(w\right)\Big|_{w=ku^{*}}\]
Let\[
\Cl{uniform-bound-second-der}\isdef\max_{i}\max_{w^{*}\in B_{1}\left(\nn{\sigma}_{i}^{(1,d)}\right)}\left|\frac{\dd^{2}}{\dd w^{2}}\nn s_{1}^{d}\left(w\right)\Big|_{w=w^{*}}\right|\]

\item The constants $\Cr{uniform-bound-first-der}$ and $\Cr{uniform-bound-second-der}$
therefore satisfy the assumptions of \prettyref{lem:lower-bound-values-circle-via-derivatives}.
We define $\Cl{skd-radius}\isdef\min\left(1,\frac{\Cr{uniform-bound-first-der}}{\Cr{uniform-bound-second-der}}\right)$.
The conclusion is that there exists a constant $\Cl{skd-below}$ such
that for every function $\eta(k):\naturals\to\reals$ satisfying $0<\eta(k)<\frac{\Cr{skd-radius}}{k}$
we have\[
\left|\nn s_{k}^{d}\left(\nn{\sigma}_{i}^{(k,d)}+\eta(k)\ee^{\imath\theta}\right)\right|>\Cr{skd-below}\eta(k)\]

\end{enumerate}
\item Now we shall bound $\left|h_{k}^{d}\left(u_{\theta}\right)\right|$
from above. Recall that\[
h_{k}^{d}(u)=\sum_{s=2}^{d+1}\left(a_{0,s}+a_{1,s}k+\dots+a_{s-2,s}k^{s-2}\right)u^{s}\]
where $a_{i,j}$ are some linear functions of $\jc_{0},\dots,\jc_{\ord}.$
Let $\zeta(k):\naturals\to\reals$ be any function satisfying $0<\zeta(k)<\frac{1}{k}$,
and consider $u_{\theta}=\nn{\sigma}_{i}^{(k,d)}+\zeta(k)\ee^{\imath\theta}$.
By \prettyref{lem:skd-roots-characterization}, $\left|\nn{\sigma}_{i}^{(k,d)}\right|<\Cr{s-roots-upper}k^{-1}$
and therefore $\left|u_{\theta}\right|<2\cdot\max\left(1,\Cr{s-roots-upper}\right)k^{-1}.$
But then \[
\begin{split}\left|h_{k}^{d}\left(u_{\theta}\right)\right| & \leq\left|a_{0,2}\right|\left|u_{\theta}\right|^{2}+\left(\left|a_{0,3}\right|+\left|a_{1,3}\right|k\right)\left|u_{\theta}\right|^{3}+\dots+\left(\left|a_{0,\ord+1}\right|+\dots+\left|a_{\ord-1,\ord+1}\right|k^{\ord-1}\right)\left|u_{\theta}\right|^{\ord+1}\leq\Cl{hkd-bound-above}\jcbound k^{-2}\end{split}
\]
for some constant $\Cr{hkd-bound-above}$.
\end{itemize}
We set\[
\Cr{skd-deviation}\isdef\frac{2\Cr{hkd-bound-above}}{\Cr{skd-below}}\]
and let $\rho(k)=\Cr{skd-deviation}\jcbound k^{-2}$. We need the
inequality $\rho(k)<\frac{\Cr{skd-radius}}{k}$ to be satisfied, and
this is obviously possible if\[
k>\underbrace{\frac{2\Cr{hkd-bound-above}}{\Cr{skd-below}\Cr{skd-radius}}}_{\isdef\Cr{kk-skd-deviation}}\jcbound\]
In this case we have shown that\[
\left|\nn s_{k}^{d}\left(\nn{\sigma}_{i}^{(k,d)}+\rho(k)\ee^{\imath\theta}\right)\right|>\Cr{skd-below}\rho(k)=2\Cr{hkd-bound-above}\jcbound k^{-2}\]
and also\[
\left|h_{k}^{d}\left(\nn{\sigma}_{i}^{(k,d)}+\rho(k)\ee^{\imath\theta}\right)\right|\leq\Cr{hkd-bound-above}\jcbound k^{-2}\]
Therefore\[
\left|\nn{s_{k}^{d}}\left(\nn{\sigma}_{i}^{(k,d)}+\rho(k)\ee^{\imath\theta}\right)\right|>\left|h_{k}^{d}\left(\nn{\sigma}_{i}^{(k,d)}+\rho(k)\ee^{\imath\theta}\right)\right|\]
which completes the proof. \qedhere

\end{proof}
\begin{rem}
We have in fact shown that for each $k>\Cr{kk-skd-deviation}$ the
polynomial $s_{k}^{d}(u)$ has precisely $\ord+1$ distinct roots.
\end{rem}
Now we can go back to the original polynomial $p_{k}^{d}(z)$ and
accurately describe the location of its roots $\left\{ z_{i}^{\left(k,d\right)}\right\} $.
Recall from \prettyref{clm:skd-pkd-corr} that $z_{i}^{\left(k,d\right)}=\uz^{-1}\left(\sigma_{i}^{\left(k,d\right)}\right)$.
Being careful to avoid the singularity $\sigma_{i}^{\left(k,\ord\right)}=-1$
(by choosing large enough $k$), we now show that the geometry of
the roots $\sigma_{i}^{\left(k,d\right)}$ is preserved under $\uz^{-1}$.
In particular, the numbers $z_{i}^{\left(k,d\right)}$ remain separated
from each other (following \eqref{eq:sroots-bounds}), each of them
being close (following \prettyref{lem:skd-roots-deviation}) to one
of the numbers

\begin{equation}
y_{i}^{(k,d)}\isdef\uz^{-1}\left(\nn{\sigma}_{i}^{\left(k,d\right)}\right)=\frac{\omega}{\nn{\sigma}_{i}^{(k,d)}+1}\label{eq:ykd-def}\end{equation}
The only thing which is different are the constants.
\begin{lem}
\label{lem:first-perturbation-characterization} Let $y_{i}^{\left(k,d\right)}$
be defined by \eqref{eq:ykd-def}. Then
\begin{enumerate}
\item \label{enu:first-perturb-i1}there exist constants $\Cl{rootlower},\Cl{rootupper},\Cl[kk]{kk-rooty}$
such that for all $k>\Cr{kk-rooty}$ and $0\leq i<j\leq\ord$\[
\Cr{rootlower}k^{-1}\leq\left|y_{i}^{(k,d)}-y_{j}^{(k,d)}\right|\leq\Cr{rootupper}k^{-1}\]

\item \label{enu:first-perturb-i2}there exist constants $\Cl{firstdev},\Cl[kk]{kk-firstdev}$
such that for all $k>\Cr{kk-firstdev}\jcbound$\[
\left|z_{i}^{(k,d)}-y_{i}^{(k,d)}\right|<\Cr{firstdev}\jcbound\cdot k^{-2}\]

\item \label{enu:first-perturb-i3}there exist constants $\Cl{finalfirstdevlower},\Cl{finalfirstdevupper},\Cl[kk]{kk-finalfirstdev}$
such that for all $k>\Cr{kk-finalfirstdev}\jcbound$ and $0\leq i<j\leq\ord$\[
\Cr{finalfirstdevlower}k^{-1}\leq\left|z_{i}^{(k,d)}-z_{j}^{(k,d)}\right|\leq\Cr{finalfirstdevupper}k^{-1}\]

\end{enumerate}
\end{lem}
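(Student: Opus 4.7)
The plan is to transfer the results of Lemma \ref{lem:skd-roots-characterization} and Lemma \ref{lem:skd-roots-deviation} from the $u$-plane to the $z$-plane via the Möbius-type map $\uz^{-1}(u)=\omega/(u+1)$. The whole argument rests on one simple observation: all the relevant points $\nn{\sigma}_i^{(k,d)}$ and $\sigma_i^{(k,d)}$ cluster near $u=0$, so that $\uz^{-1}$ is bi-Lipschitz on a neighborhood of them, uniformly in $k$.

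First I would quantify this clustering. From Lemma \ref{lem:skd-roots-characterization}(\ref{enu:lag-p3}) applied with $j=0$ (so that $\nn{\sigma}_0^{(k,d)}=0$) we have $|\nn{\sigma}_i^{(k,d)}|\leq \Cr{s-roots-upper}k^{-1}$, and then Lemma \ref{lem:skd-roots-deviation} gives $|\sigma_i^{(k,d)}|\leq \Cr{s-roots-upper}k^{-1}+\Cr{skd-deviation}\jcbound k^{-2}$. Choosing $k$ large enough (depending on $\jcbound$), both sets of points lie in the disk $B_{1/2}(0)$. On this disk, $|u+1|\in[1/2,3/2]$, so the derivative $(\uz^{-1})'(u)=-\omega/(u+1)^{2}$ has $|(\uz^{-1})'(u)|\in[4/9,4]$. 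Hence for any two points $u,u'\in B_{1/2}(0)$,
\begin{equation}
\tfrac{4}{9}\,|u-u'|\;\leq\;|\uz^{-1}(u)-\uz^{-1}(u')|\;\leq\;4\,|u-u'|.\label{eq:bilip}
\end{equation}

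For part (\ref{enu:first-perturb-i1}), I apply \eqref{eq:bilip} to $u=\nn{\sigma}_i^{(k,d)}$, $u'=\nn{\sigma}_j^{(k,d)}$, and combine with the separation bound \eqref{eq:sroots-bounds}. This immediately yields the desired two-sided bound $\Cr{rootlower}k^{-1}\leq |y_i^{(k,d)}-y_j^{(k,d)}|\leq \Cr{rootupper}k^{-1}$, with $\Cr{rootlower}=\tfrac{4}{9}\Cr{s-roots-lower}$ and $\Cr{rootupper}=4\Cr{s-roots-upper}$. For part (\ref{enu:first-perturb-i2}), I note that $z_i^{(k,d)}=\uz^{-1}(\sigma_i^{(k,d)})$ by \prettyref{clm:skd-pkd-corr} and $y_i^{(k,d)}=\uz^{-1}(\nn{\sigma}_i^{(k,d)})$ by definition \eqref{eq:ykd-def}; applying the upper half of \eqref{eq:bilip} together with Lemma \ref{lem:skd-roots-deviation} gives $|z_i^{(k,d)}-y_i^{(k,d)}|\leq 4\Cr{skd-deviation}\jcbound k^{-2}$, so we can take $\Cr{firstdev}=4\Cr{skd-deviation}$.

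Finally, part (\ref{enu:first-perturb-i3}) follows from (\ref{enu:first-perturb-i1}) and (\ref{enu:first-perturb-i2}) by the triangle inequality:
\begin{equation}
|y_i^{(k,d)}-y_j^{(k,d)}|-2\Cr{firstdev}\jcbound k^{-2}\;\leq\;|z_i^{(k,d)}-z_j^{(k,d)}|\;\leq\;|y_i^{(k,d)}-y_j^{(k,d)}|+2\Cr{firstdev}\jcbound k^{-2}.
\end{equation}
Choosing $\Cr{kk-finalfirstdev}$ large enough so that $2\Cr{firstdev}\jcbound k^{-2}\leq \tfrac{1}{2}\Cr{rootlower}k^{-1}$ whenever $k>\Cr{kk-finalfirstdev}\jcbound$, the quadratic term is dominated and we obtain $\tfrac{1}{2}\Cr{rootlower}k^{-1}\leq |z_i^{(k,d)}-z_j^{(k,d)}|\leq 2\Cr{rootupper}k^{-1}$. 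I do not anticipate a genuine obstacle here: the only subtlety is bookkeeping of the threshold on $k$ so that all three estimates hold simultaneously, which is handled by taking the maximum of the thresholds coming from Lemmas \ref{lem:skd-roots-characterization} and \ref{lem:skd-roots-deviation} together with the condition ensuring \eqref{eq:bilip} is valid and the separation in (\ref{enu:first-perturb-i3}) survives the perturbation.
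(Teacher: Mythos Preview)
Your argument is correct and follows essentially the same route as the paper: transfer the separation and perturbation estimates from the $u$-plane to the $z$-plane via the explicit map $\uz^{-1}(u)=\omega/(u+1)$, then use the triangle inequality for part~(\ref{enu:first-perturb-i3}). One small point: the lower bound in \eqref{eq:bilip} does not follow from the derivative bound alone (a lower bound on $|f'|$ does not in general give $|f(u)-f(u')|\geq c|u-u'|$ for complex maps); rather, it comes from the explicit identity $|\uz^{-1}(u)-\uz^{-1}(u')|=|u-u'|/(|u+1|\,|u'+1|)$, which is exactly how the paper argues and which yields your constants directly.
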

\begin{proof}
If $k>2\Cr{s-roots-upper}$ then $\left|\nn{\sigma}_{i}^{(k,d)}\right|<\frac{1}{2}$
(see \eqref{eq:sroots-bounds}). It follows that $\frac{1}{2}<\left|\nn{\sigma}_{i}^{(k,d)}+1\right|\leq1$
and so by \eqref{eq:ykd-def} \[
\Cr{s-roots-lower}k^{-1}<\left|y_{i}^{(k,d)}-y_{j}^{(k,d)}\right|<4\Cr{s-roots-upper}k^{-1}\]
This proves \eqref{enu:first-perturb-i1} with $\Cr{rootlower}=\Cr{s-roots-lower}$,
$\Cr{rootupper}=4\Cr{s-roots-upper}$ and $\Cr{kk-rooty}=2\Cr{s-roots-upper}$.

If in addition $k>\frac{2\Cr{skd-deviation}}{\Cr{s-roots-lower}}$
then $\left|\nn{\sigma}_{i}^{(k,d)}-\sigma_{i}^{(k,d)}\right|<\frac{\left|\nn{\sigma}_{i}^{(k,d)}\right|}{2}<\frac{1}{4}$
and therefore $\left|\sigma_{i}^{(k,d)}+1\right|>\frac{1}{4}$. It
follows from \eqref{eq:skd-roots-deviation} that\[
\left|z_{i}^{(k,d)}-y_{i}^{(k,d)}\right|=\frac{\left|\nn{\sigma}_{i}^{(k,d)}-\sigma_{i}^{(k,d)}\right|}{\left|\nn{\sigma}_{i}^{(k,d)}+1\right|\left|\sigma_{i}^{(k,d)}+1\right|}<4\Cr{skd-deviation}k^{-2}\qquad k>\underbrace{\max\left(2\Cr{s-roots-upper},\frac{2\Cr{skd-deviation}}{\Cr{s-roots-lower}},\Cr{kk-skd-deviation}\right)}_{\isdef\Cr{kk-firstdev}}\jcbound\]
and this proves \eqref{enu:first-perturb-i2} with $\Cr{firstdev}=4\Cr{skd-deviation}$
and $\Cr{kk-firstdev}$ as above.

Let $k>\underbrace{\max\left(\Cr{kk-rooty},\Cr{kk-firstdev},\frac{4\Cr{firstdev}}{\Cr{rootlower}}\right)}_{\isdef\Cr{kk-finalfirstdev}}\jcbound$.
Using \eqref{enu:first-perturb-i1} and \eqref{enu:first-perturb-i2},
we have one one hand\[
\begin{split}\left|z_{i}^{(k,d)}-z_{j}^{(k,d)}\right| & <\left|y_{i}^{(k,d)}-y_{j}^{(k,d)}\right|+\left|z_{i}^{(k,d)}-y_{i}^{(k,d)}\right|+\left|y_{j}^{(k,d)}-z_{j}^{(k,d)}\right|\\
 & <\left|y_{i}^{(k,d)}-y_{j}^{(k,d)}\right|+\frac{\Cr{firstdev}}{k^{2}}+\frac{\Cr{firstdev}}{k^{2}}\\
 & <\left|y_{i}^{(k,d)}-y_{j}^{(k,d)}\right|+2\cdot\frac{\Cr{rootlower}}{4k}<\frac{3}{2}\left|y_{i}^{(k,d)}-y_{j}^{(k,d)}\right|<\underbrace{\frac{3}{2}\Cr{rootupper}}_{\isdef\Cr{finalfirstdevupper}}k^{-1}\end{split}
\]
and on the other hand\[
\left|z_{i}^{(k,d)}-z_{j}^{(k,d)}\right|>\left|y_{i}^{(k,d)}-y_{j}^{(k,d)}\right|-\left|y_{j}^{(k,d)}-z_{j}^{(k,d)}\right|-\left|y_{j}^{(k,d)}-z_{j}^{(k,d)}\right|>\frac{1}{2}\left|y_{i}^{(k,d)}-y_{j}^{(k,d)}\right|>\underbrace{\frac{1}{2}\Cr{rootlower}}_{\isdef\Cr{finalfirstdevlower}}k^{-1}\]
That proves \eqref{enu:first-perturb-i3}.
\end{proof}
\global\long\def\fsbb{\fcsmoothbound^{*}}
\global\long\def\jccbound{\jc^{**}}
\global\long\def\sab{H}

Remaining in the $z$-plane, we now turn to investigate $q_{k}^{d}(z)$
and its roots $\left\{ \kappa_{i}^{\left(k,d\right)}\right\} $. Recall
that we consider $q_{k}^{d}\left(z\right)$ to be a ``perturbation''
of $p_{k}^{d}(z)$ by another polynomial $e_{k}^{d}(z)$, i.e.\[
q_{k}^{d}(z)=p_{k}^{d}(z)+e_{k}^{d}(z)\]

The coefficients of $e_{k}^{d}(z)$ depend on the Fourier coefficients
of the ``smooth part'' of our pieceiwise-smooth function $\fun$.
It turns out that in the general setting, the coefficients of $e_{k}^{d}(z)$
are large compared to those of $p_{k}^{d}\left(z\right)$ and therefore
the perturbations of the roots are large too. If, however, there is
enough structure in those coefficients due to additional orders of
smoothness, then the perturbation of the roots is small. This is the
essense of the key \prettyref{lem:qkd-root-deviation} below.

Recall that $\fun$ has in fact $\ord_{1}\geq\ord$ continuous derivatives
everywhere in $[-\pi,\pi]\setminus\left\{ \jp\right\} $, and denote
the additional jump magnitudes at $\jp$ by $\jc_{\ord+1},\dots,\jc_{\ord_{1}}$.
For every $l\leq d_{1}$, let $\sing_{l}$ denote the piecewise polynomial
of degree $l$ with jump point $\jp$ and jump magnitudes $\jc_{0},\dotsc,\jc_{l}$.
Then we write

\begin{equation}
\fun=\sing_{d}+\left(\sing_{d_{1}}-\sing_{d}\right)+\smooth^{*}\label{eq:decomp-additional-smoothness}\end{equation}
where $\smooth^{*}$ is $\ord_{1}$-times smooth everywhere in $\left[-\pi,\pi\right]$.
Thus there exists a constant $\fsbb$ such that \begin{equation}
\left|\fc\left(\smooth^{*}\right)\right|\leq\fsbb k^{-\ord_{1}-2}\label{eq:add-smooth-fc-bound}\end{equation}
Let us also denote\[
\begin{split}\jccbound & \isdef\max\left(1,\sum_{l=\ord+1}^{\ord_{1}}\left|\jc_{l}\right|\right)\\
\sab & \isdef\left(\jcbound+\jccbound+\fsbb\right)\end{split}
\]

\begin{lem}
\label{lem:qkd-root-deviation}Let $d_{1}\geq2\ord+1$. Then there
exist constants $\Cl{qkd-deviation-reg},\Cl{qkd-deviation-omega},\Cl[kk]{kk-qkd-deviation}$
such that for all $k>\Cr{kk-qkd-deviation}\sab$\[
\left|\kappa_{i}^{(k,d)}-z_{i}^{(k,d)}\right|\leq\begin{cases}
\Cr{qkd-deviation-reg}\cdot\sab\cdot k^{-2} & i=1,2,\dots,\ord\\
\Cr{qkd-deviation-omega}\cdot\sab\cdot k^{-\ord-2} & i=0\end{cases}\]
\end{lem}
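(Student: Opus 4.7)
The plan is to adapt the Rouche-based argument of Lemma~\ref{lem:skd-roots-deviation}. Working in the $u$-plane via $u = \omega/z - 1$, I'd define the normalized perturbation
\[
\tilde e_k^d(u) \isdef \frac{(u+1)^{d+1}}{\omega^{k+d+1}} e_k^d\!\left(\frac{\omega}{u+1}\right) = \sum_{j=0}^{d+1}(-1)^j\binom{d+1}{j}\tilde\delta_{k+j}(u+1)^j,
\]
where $\tilde\delta_k \isdef \omega^{-k}(r_k - m_k)$. Using the decomposition $\fun = \sing_d + (\sing_{d_1} - \sing_d) + \smooth^*$ from \eqref{eq:decomp-additional-smoothness}, split $\tilde e_k^d = \tilde e_k^{d,\textup{jump}} + \tilde e_k^{d,\textup{smooth}}$. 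The target is to show that on a disk of radius $\sim k^{-1}$ containing all the roots $\sigma_i^{(k,d)}$ (in particular $\sigma_0^{(k,d)}=0$), one has $|\tilde e_k^d(u)| = O(\sab k^{-d-2})$. Given this, Rouche's theorem on a disk of radius $\rho_i \sim \sab k^{-d-2}$ around each $\sigma_i^{(k,d)}$ yields the stated bound, since $(s_k^d)'(\sigma_i^{(k,d)})$ is bounded below by a positive constant: by the scaling $\nn s_k^d(u)=k^{-1}\nn s_1^d(ku)$ together with the $O(k^{-2})$ closeness of $\sigma_i^{(k,d)}$ to $\nn\sigma_i^{(k,d)}$ (Lemma~\ref{lem:skd-roots-deviation}), this derivative converges as $k\to\infty$ to the nonzero value $\nn s_1^d{}'(\nn\sigma_i^{(1,d)})$, nonzero because the associated Laguerre roots are simple.

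The smooth part of $\tilde e_k^d$ is easy. By \eqref{eq:add-smooth-fc-bound},
\[
|\tilde\delta_k^{\textup{smooth}}| = \bigl|2\pi (\imath k)^{d+1}\omega^{-k}\fc(\smooth^*)\bigr| \leq 2\pi \fsbb\, k^{d-d_1-1},
\]
which is $O(\fsbb k^{-d-2})$ precisely because of the hypothesis $d_1 \geq 2d+1$. Since $(u+1)^j$ is bounded on the region of interest, $|\tilde e_k^{d,\textup{smooth}}(u)| = O(\fsbb k^{-d-2})$ uniformly there.

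The heart of the proof is the jump part, where the cancellation has to be exposed carefully. Writing
\[
\tilde e_k^{d,\textup{jump}}(u) = \sum_{l=d+1}^{d_1}\imath^{d-l}\jc_l\sum_{j=0}^{d+1}(-1)^j\binom{d+1}{j}(k+j)^{d-l}(u+1)^j,
\]
I would expand $(u+1)^j = \sum_s \binom{j}{s}u^s$, swap summation orders, and apply the identity $\binom{d+1}{j}\binom{j}{s} = \binom{d+1}{s}\binom{d+1-s}{j-s}$ together with $\sum_j(-1)^j\binom{n}{j}g(j)=(-1)^n\Delta^n g(0)$ to obtain
\[
\sum_{j=0}^{d+1}(-1)^j\binom{d+1}{j}(k+j)^{d-l}(u+1)^j = (-1)^{d+1}\sum_{s=0}^{d+1}\binom{d+1}{s}u^s\,\Delta^{d+1-s}h_l(s),
\]
with $h_l(j)\isdef (k+j)^{d-l}$. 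A standard estimate for finite differences of smooth functions gives $|\Delta^n h_l(s)| = O(k^{d-l-n})$ uniformly for bounded $s$, so the $s$-th summand is $O(|u|^s k^{s-l-1})$. On the disk $|u|\leq \textup{const}\cdot k^{-1}$, every such term is $O(k^{-l-1})$, and summing over $l\geq d+1$ the $l=d+1$ contribution dominates, yielding $|\tilde e_k^{d,\textup{jump}}(u)| = O(\jccbound k^{-d-2})$.

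Combining the two parts gives $|\tilde e_k^d(u)| = O(\sab k^{-d-2})$ throughout the relevant region. The Rouche step and the transfer back to the $z$-plane via $\uz^{-1}$ (whose Jacobian is bounded on this region for $k$ large enough, exactly as in Lemma~\ref{lem:first-perturbation-characterization}) then deliver $|\kappa_i^{(k,d)} - z_i^{(k,d)}| = O(\sab k^{-d-2})$ for every $i$ --- in fact stronger than the $O(\sab k^{-2})$ bound claimed in the lemma for $i\geq 1$, but that weaker form is all that is needed for the subsequent accuracy analysis. The main obstacle is the bookkeeping in the third paragraph: verifying that the $(d+1)$-fold finite-difference cancellation really is sharp enough to absorb the factor $k^{d-l}$, and checking that $d_1 \geq 2d+1$ is exactly the threshold that makes the smooth residual and the jump residual contribute at the same $k^{-d-2}$ order --- any less smoothness and the $\smooth^*$ term would dominate and spoil the estimate.
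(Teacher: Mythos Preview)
Your argument is correct, and in fact yields the uniform bound $|\kappa_i^{(k,d)}-z_i^{(k,d)}|=O(\sab\,k^{-d-2})$ for \emph{all} $i$, which is stronger than the $O(\sab\,k^{-2})$ the paper states for $i\ge 1$. The approach is genuinely different from the paper's in one structural respect. The paper transfers back to the $z$-plane \emph{before} estimating the perturbation: it bounds $|e_k^d(z_\theta)|$ by Taylor-expanding $z_\theta^{d+1-j}=(\omega+\zeta e^{\imath\theta})^{d+1-j}$ to second order in $\zeta$, applying the finite-difference lemma (\prettyref{lem:binomial-sum-fractions}) only to the zeroth- and first-order terms, and lumping the remainder as $O(k^{-1}\zeta^2)$. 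For $i\ge 1$ one has $\zeta\sim k^{-1}$, so this remainder term is $O(k^{-3})$, which the paper then loosens further to $O(k^{-2})$. You instead stay in the $u$-plane throughout and expand $(u+1)^j$ fully via the binomial theorem, which amounts to carrying the Taylor expansion to all orders; the identity $\binom{d+1}{j}\binom{j}{s}=\binom{d+1}{s}\binom{d+1-s}{j-s}$ lets you invoke the finite-difference cancellation at every order $s$, and every resulting term is $O(k^{-l-1})$ on $|u|\lesssim k^{-1}$. This is cleaner and sharper. What the paper's route buys is a slightly more concrete handling of the lower bound on $|p_k^d|$ near the roots (it reuses the exact inequalities from the proof of \prettyref{lem:skd-roots-deviation} rather than re-deriving a derivative bound for $s_k^d$ at $\sigma_i^{(k,d)}$), but your derivative argument is easily made rigorous via the same ingredients. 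Both proofs pivot on the same two facts: Rouche's theorem and the $(d{+}1{-}s)$-fold difference estimate of \prettyref{lem:binomial-sum-fractions}, with $d_1\ge 2d+1$ entering exactly as you identify, to make the $\smooth^*$ contribution $O(k^{-d-2})$.
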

\begin{proof}
The idea of the proof is the same as in \prettyref{lem:skd-roots-deviation}.
Namely, we shall seek the constants $\Cr{qkd-deviation-reg},\Cr{qkd-deviation-omega}$
and $\Cr{kk-qkd-deviation}$ such that if $\rho_{1}(k)=\Cr{qkd-deviation-reg}\cdot\sab\cdot k^{-2}$
and $\rho_{2}(k)=\Cr{qkd-deviation-omega}\cdot\sab\cdot k^{-\ord-2}$
then for $i=0,1,\dots,\ord$ and $k>\Cr{kk-qkd-deviation}\sab$ there
exist neighborhoods $D_{i}^{(k)}$ of $z_{i}^{(k,d)}$ such that $\left|p_{k}^{d}(z)\right|>\left|e_{k}^{d}(z)\right|$
on the boundary of $D_{i}^{(k)}$ and
\begin{itemize}
\item $\diam D_{i}^{(k)}=\rho_{1}(k)$ for $i=1,2,\dots,\ord$; 
\item $\diam D_{0}^{(k)}=\rho_{2}(k)$.\end{itemize}
\begin{enumerate}
\item In order to show that $\left|p_{k}^{d}(z)\right|$ is large in some
neighborhood of $z_{i}^{(k,d)}$, let us first show that $\left|s_{k}^{d}(u)\right|$
is large in some neighborhood of $\sigma_{i}^{(k,d)}$. Recall that
$s_{k}^{d}(u)=\nn s_{k}^{d}(u)+h_{k}^{d}(u)$. We have shown in the
proof of \prettyref{lem:skd-roots-deviation} that if $\eta(k)$ is
any function satisfying $0<\eta(k)<\frac{\Cr{skd-radius}}{k}$, then
$\left|\nn s_{k}^{d}(u)\right|>\Cr{skd-below}\eta(k)$ everywhere
on $\partial B_{\eta(k)}\left(\nn{\sigma}_{i}^{(k,d)}\right)$. Furthermore,
in this case $\left|h_{k}^{d}(u)\right|<\Cr{hkd-bound-above}\jcbound k^{-2}$.
We now require that\[
\Cr{hkd-bound-above}\jcbound k^{-2}<\frac{1}{2}\Cr{skd-below}\eta(k)\]
 which is true if $k>\frac{2\Cr{hkd-bound-above}\jcbound}{\Cr{skd-below}\Cr{skd-radius}}=\Cr{kk-skd-deviation}\jcbound$.
In that case we have\[
\left|s_{k}^{d}(u)\right|>\frac{1}{2}\Cr{skd-below}\eta(k)\qquad\forall u\in B_{\eta\left(k\right)}\left(\nn{\sigma}_{i}^{(k,d)}\right)\]
This is almost what we want - we would like to have such a bound on
the boundary of a neighborhood of $\sigma_{i}^{(k,d)}$ instead of
$\nn{\sigma}_{i}^{(k,d)}$. If $i=0$ then these values coincide,
and so we're done. Otherwise, recall that for $k>\Cr{kk-skd-deviation}\jcbound$
we also have $\left|\nn{\sigma}_{i}^{(k,d)}-\sigma_{i}^{(k,d)}\right|\leq\Cr{skd-deviation}\jcbound k^{-2}$.
So in order to make sure that $\sigma_{i}^{(k,d)}$ belongs to $B_{\eta(k)}\left(\nn{\sigma}_{i}^{(k,d)}\right)$,
we just require that $\eta(k)\geq\Cr{skd-deviation}\jcbound k^{-2}$.

We have thus shown the following: 
\begin{enumerate}
\item For every function $0<\eta(k)<\frac{\Cr{skd-radius}}{k}$ and for
every $k>\Cr{kk-skd-deviation}\jcbound$, the following bound holds
for every $u$ on the boundary of a neighborhood of $\sigma_{0}^{(k,d)}=0$
of diameter $2\eta(k)$:\begin{equation}
\left|s_{k}^{d}(u)\right|>\frac{1}{2}\Cr{skd-below}\eta(k)\label{eq:skd-lower-bound}\end{equation}

\item For every $k>\Cr{kk-skd-deviation}\jcbound$ and $\eta(k)$ as above,
which additionally satisfies $\eta(k)\geq\Cr{skd-deviation}\jcbound k^{-2}$,
the above bound holds for every $u$ on the boundary of a neighborhood
of $\sigma_{i}^{(k,d)}$ of the same diameter $2\eta(k)$, for every
$i=0,1,\dots,\ord$.
\end{enumerate}
\item We can now show that similar bounds hold for $p_{k}^{d}(z)$. Again,
only the constants will be different. The map $\uz^{-1}$ \eqref{eq:u-z-map},
being a Möbius transformation, maps $B_{\eta(k)}\left(\nn{\sigma}_{i}^{(k,d)}\right)$
to a circular neighborhood of $z_{i}^{(k,d)}$ (which is not necessarily
centered at $z_{i}^{(k,d)}$) . Let $u^{*}\in B_{\eta(k)}\left(\nn{\sigma}_{i}^{(k,d)}\right)$
. Now $\left|u^{*}-\nn{\sigma}_{i}^{(k,d)}\right|\leq\Cr{skd-radius}k^{-1}$
and also $-\frac{\Cr{s-roots-upper}}{k}<\nn{\sigma}_{i}^{(k,d)}<0$.
Therefore if $k>2\left(\Cr{s-roots-upper}+\Cr{skd-radius}\right)$
then $\Re\left(u^{*}\right)>-\frac{1}{2}$ and so $\left|u^{*}+1\right|>\frac{1}{2}$.
On the other hand, in this case $\left|u^{*}+1\right|<2$.

Now let $u_{1}$ and $u_{2}$ be two points in the $u$-plane, such
that $\left|u_{1}-u_{2}\right|=r$ and $\frac{1}{2}<\left|u_{1}\right|,\left|u_{2}\right|<2$.
They are mapped to the $z$-plane such that\[
\frac{r}{4}<\left|\frac{\omega}{u_{1}+1}-\frac{\omega}{u_{2}+1}\right|<4r\]

Recalling \eqref{eq:skd-lower-bound} and \eqref{eq:eliminant-shifted},
we conclude:
\begin{enumerate}
\item For every function $0<\eta(k)<\frac{\Cr{skd-radius}}{k}$ and every
$k>\underbrace{\max\left(\Cr{kk-skd-deviation},2\left(\Cr{s-roots-upper}+\Cr{skd-radius}\right)\right)}_{\isdef\Cl[kk]{kk-pkd-upper}}\jcbound$,
there exists a circular neighborhood of $\omega$ having diameter
between $\frac{\eta(k)}{2}$ and $8\eta(k)$, such that the magnitude
of $p_{k}^{d}(z)$ on the boundary of this neighborhood satisfies
\[
\left|p_{k}^{d}(z)\right|=\left|z^{\ord+1}\right|\left|s_{k}^{d}(u)\right|>2^{-\ord-2}\Cr{skd-below}\eta(k)=\Cl{pkd-below}\eta\left(k\right)\]

\item For every $k>\Cr{kk-pkd-upper}\jcbound$ and $\eta(k)$ as above,
which additionally satisfies $\eta(k)\geq\Cr{skd-deviation}\jcbound k^{-2}$,
the above bound holds for every $z$ on the boundary of a neighborhood
of $z_{i}^{(k,d)}$ of the same diameter as above, for every $i=0,1,\dots,\ord$.
\end{enumerate}
\item Once we have the lower bound for $\left|p_{k}^{d}(z)\right|$ on circles
of diameter at most $8\eta\left(k\right)<\frac{8\Cr{skd-radius}}{k}$
containing $z_{i}^{(k,d)}$, let us now establish an upper bound for
$\left|e_{k}^{d}(z)\right|$ on these circles. Let $z_{*}$ belong
to such a circle. On one hand, its distance from $z_{i}^{(k,d)}$
is at most $\frac{8\Cr{skd-radius}}{k}$. On the other hand, by \prettyref{lem:first-perturbation-characterization}
$\left|z_{i}^{(k,d)}-\omega\right|<\frac{\Cr{finalfirstdevupper}}{k}$
for all $k>\Cr{kk-finalfirstdev}\jcbound$. Therefore $\left|z_{*}-\omega\right|<\frac{8\Cr{skd-radius}+\Cr{finalfirstdevupper}}{k}$.
Denote $\Cl{zeta-bound}\isdef8\Cr{skd-radius}+\Cr{finalfirstdevupper}$
and let $z_{\theta}=\omega+\zeta(k)\ee^{\imath\theta}$ where $\zeta(k)$
is some function satisfying $0<\zeta(k)<\frac{\Cr{zeta-bound}}{k}$.
Our goal now is to find a uniform upper bound for $\left|e_{k}^{d}(z_{\theta})\right|$.

\begin{enumerate}
\item By \eqref{eq:decomp-additional-smoothness} we have\begin{align}
\begin{split}r_{k}-m_{k} & =2\pi(\imath k)^{d+1}\left\{ \fc(\fun)-\fc(\sing_{d})\right\} =2\pi(\imath k)^{d+1}\left\{ \fc(\sing_{d_{1}})-\fc(\sing_{d})+\fc(\smooth^{*})\right\} \\
 & =2\pi(\imath k)^{d+1}\left\{ \frac{\omega^{k}}{2\pi}\cdot\sum_{l=d+1}^{d_{1}}\frac{A_{l}}{(\imath k)^{l+1}}+\fc(\smooth^{*})\right\} =\omega^{k}\cdot\sum_{l=1}^{d_{1}-d}\frac{A_{d+l}}{(\imath k)^{l}}+\underbrace{2\pi(\imath k)^{d+1}\fc(\smooth^{*})}_{\isdef\delta_{k}}\end{split}
\label{eq:poly-coeffs-perturbation}\end{align}
Therefore\[
\begin{split}e_{k}^{d}(z_{\theta}) & =\sum_{j=0}^{d+1}(-1)^{j}\binom{d+1}{j}\left\{ \omega^{k+j}\cdot\sum_{l=1}^{d_{1}-d}\frac{A_{d+l}}{(\imath(k+j))^{l}}+\delta_{k+j}\right\} z_{\theta}^{d+1-j}\\
 & =\underbrace{\sum_{l=1}^{d_{1}-d}(-\imath)^{l}\jc_{d+l}\sum_{j=0}^{d+1}\frac{(-1)^{j}}{\left(k+j\right)^{l}}\binom{d+1}{j}\omega^{k+j}z_{\theta}^{d+1-j}}_{\isdef\Lambda_{k}\left(z_{\theta}\right)}+\underbrace{\sum_{j=0}^{d+1}(-1)^{j}\binom{d+1}{j}\delta_{k+j}z_{\theta}^{d+1-j}}_{\isdef\Delta_{k}(z_{\theta})}\end{split}
\]

\item On one hand, we have the bound \eqref{eq:add-smooth-fc-bound}. On
the other hand, $\left|z_{\theta}\right|<2$ and therefore\[
\left|\Delta_{k}(z_{\theta})\right|\leq\C2^{\ord+1}\cdot2\pi\cdot k^{\ord+1}\left|\fc\left(\smooth\right)\right|\leq\frac{\Cl{delta-bound}\fsbb}{k^{d_{1}-d+1}}\]
for some $\Cr{delta-bound}$.
\item Now we need to estimate $\Lambda_{k}\left(z_{\theta}\right)$. First
\[
\begin{split}z_{\theta}^{d+1-j} & =\left(\omega+\zeta(k)\ee^{\imath\theta}\right){}^{d+1-j}\\
 & =\omega^{d+1-j}+\left(\ord+1-j\right)\omega^{d-j}\zeta(k)\ee^{\imath\theta}+\alpha_{j}(k)\end{split}
\]
where $\left|\alpha_{j}(k)\right|\leq\Cl{ztheta-1}\zeta^{2}\left(k\right)$
for some constant $\Cr{ztheta-1}$. Furthermore, using the estimate
of \prettyref{lem:binomial-sum-fractions} we have \begin{eqnarray*}
\Lambda_{k}\left(z_{\theta}\right) & = & \omega^{k+d+1}\sum_{l=1}^{d_{1}-d}\frac{A_{d+l}}{\imath^{l}}\underbrace{\sum_{j=0}^{d+1}(-1)^{j}\binom{d+1}{j}\frac{1}{(k+j)^{l}}}_{\left|\cdot\right|\leq\C\cdot k^{-d-l-1}}\\
 &  & +\zeta(k)\ee^{\imath\theta}(d+1)\omega^{k+d}\sum_{l=1}^{d_{1}-d}\frac{A_{d+l}}{\imath^{l}}\underbrace{\sum_{j=0}^{d}(-1)^{j}\binom{d}{j}\frac{1}{(k+j)^{l}}}_{\left|\cdot\right|\leq\C\cdot k^{-d-l}}\\
 &  & \underbrace{\sum_{l=1}^{d_{1}-d}(-\imath)^{l}\jc_{d+l}\sum_{j=0}^{d+1}\frac{(-1)^{j}}{\left(k+j\right)^{l}}\binom{d+1}{j}\omega^{k+j}\alpha_{j}\left(k\right)}_{\left|\cdot\right|\leq\C\cdot k^{-1}\zeta^{2}\left(k\right)}\end{eqnarray*}
for all $k>\Cl[kk]{kk-lambdak}$ where $\Cr{kk-lambdak}$ is an explicit
constant (see \prettyref{lem:binomial-sum-fractions}). Therefore\[
\left|\Lambda_{k}\left(z_{\theta}\right)\right|<\jccbound\times\left\{ \Cl{lambda-1}\cdot k^{-\ord-2}+\Cl{lambda-2}\zeta(k)k^{-\ord-1}+\Cl{lambda-3}\cdot k^{-1}\zeta^{2}\left(k\right)\right\} \]
Combining all the above estimates we therefore have for $k>\max\left(\Cr{kk-finalfirstdev},\Cr{kk-lambdak}\right)\jcbound$\begin{equation}
\left|e_{k}^{d}(z_{\theta})\right|<\jccbound\times\left(\frac{\Cr{lambda-1}}{k^{\ord+2}}+\frac{\Cr{lambda-2}}{k^{\ord+1}}\zeta(k)+\frac{\Cr{lambda-3}}{k}\zeta^{2}\left(k\right)\right)+\frac{\Cr{delta-bound}\fsbb}{k^{d_{1}-d+1}}\label{eq:ekd-upper-bound}\end{equation}

\end{enumerate}
\item We can finally compare $\left|p_{k}^{d}\left(z\right)\right|$ and
$\left|e_{k}^{d}\left(z\right)\right|$. Let $k>\underbrace{\max\left(\Cr{kk-finalfirstdev},\Cr{kk-lambdak},\Cr{kk-pkd-upper}\right)}_{\isdef\Cl[kk]{kk-comparison-initial}}\jcbound$
and consider two cases.

\begin{enumerate}
\item Suppose $z_{i}^{(k,d)}\neq\omega$. Set $\rho\left(k\right)=\frac{\Cr{qkd-deviation-reg}\sab}{8k^{2}}$
where $\Cr{qkd-deviation-reg}$ is to be determined, and suppose also
that\begin{equation}
\Cr{skd-deviation}\jcbound k^{-2}\leq\rho\left(k\right)<\frac{\Cr{skd-radius}}{k}\label{eq:rhok-condition}\end{equation}
We have shown above that there exists a neighborhood $D_{i}^{(k)}$
containing $z_{i}^{(k,d)}$ of diameter at most $8\rho\left(k\right)=\Cr{qkd-deviation-reg}\cdot\sab\cdot k^{-2}$
such that for every $z^{*}\in\partial D_{i}^{(k,d)}$ we have $\left|p_{k}^{d}(z^{*})\right|>\Cr{pkd-below}\rho\left(k\right)=\frac{\Cr{pkd-below}\Cr{qkd-deviation-reg}\sab}{8k^{2}}$.
On the other hand, for every such $z^{*}$ we have by \eqref{eq:ekd-upper-bound}\[
\begin{split}\left|e_{k}^{d}(z^{*})\right| & <\jccbound\times\left(\frac{\Cr{lambda-1}}{k^{\ord+2}}+\frac{\Cr{lambda-2}}{k^{\ord+1}}\rho(k)+\frac{\Cr{lambda-3}}{k}\rho^{2}\left(k\right)\right)+\frac{\Cr{delta-bound}\fsbb}{k^{d_{1}-d+1}}\\
 & <\frac{\jccbound\times\left(\Cr{lambda-1}+\Cr{lambda-2}\Cr{skd-radius}+\Cr{lambda-3}\Cr{skd-radius}^{2}\right)+\Cr{delta-bound}\fsbb}{k^{2}}<\left(\jccbound+\fsbb\right)\frac{\Cl{ekd-reg-upper}}{k^{2}}\end{split}
\]
Therefore we must choose $\Cr{qkd-deviation-reg}$ and $k$ for which
the condition $\frac{\Cr{pkd-below}\Cr{qkd-deviation-reg}\sab}{8k^{2}}>\frac{\Cr{ekd-reg-upper}\left(\jccbound+\fsbb\right)}{k^{2}}$
is satisfied, together with \eqref{eq:rhok-condition}. For example:\[
\begin{split}\Cr{qkd-deviation-reg} & \isdef\max\left(\frac{8\Cr{ekd-reg-upper}}{\Cr{pkd-below}},8\Cr{skd-deviation}\right)\\
k & >\underbrace{\max\left(\Cr{kk-comparison-initial},\frac{\Cr{qkd-deviation-reg}}{8\Cr{skd-radius}}\right)}_{\isdef\Cr{kk-qkd-deviation}}\times\sab\end{split}
\]
In this case, $q_{k}^{d}(z)$ has a simple zero $\kappa_{i}^{(k,d)}$
in $D_{i}^{(k)}$ so that $\left|\kappa_{i}^{(k,d)}-z_{i}^{(k,d)}\right|<\Cr{qkd-deviation-reg}\left(\jcbound+\jccbound+\fsbb\right)k^{-2}$.
\item Now consider the case $z_{0}^{(k,d)}=\omega$. Set $\rho\left(k\right)=\frac{\Cr{qkd-deviation-omega}\sab}{8k^{\ord+2}}$
where $\Cr{qkd-deviation-omega}$ is to be determined. We again require
that $\rho\left(k\right)<\frac{\Cr{skd-radius}}{k}$. We have shown
that whenever $k>\Cr{kk-pkd-upper}\jcbound$, there exists a neighborhood
$D_{0}^{\left(k\right)}$containing $\omega$ such that for every
$z^{*}\in\partial D_{0}^{(k)}$ we have $\left|p_{k}^{d}\left(z^{*}\right)\right|>\Cr{pkd-below}\rho\left(k\right)=\frac{\Cr{qkd-deviation-omega}\Cr{pkd-below}\sab}{8k^{\ord+2}}$.
On the other hand, by \eqref{eq:ekd-upper-bound} for every such $z^{*}$
we have\[
\begin{split}\left|e_{k}^{d}(z^{*})\right| & <\jccbound\times\left(\frac{\Cr{lambda-1}}{k^{\ord+2}}+\frac{\Cr{lambda-2}}{k^{\ord+1}}\rho(k)+\frac{\Cr{lambda-3}}{k}\rho^{2}\left(k\right)\right)+\frac{\Cr{delta-bound}\fsbb}{k^{d_{1}-d+1}}\\
 & <\frac{\jccbound\times\left(\Cr{lambda-1}+\Cr{lambda-2}\Cr{skd-radius}+\Cr{lambda-3}\Cr{skd-radius}^{2}\right)+\Cr{delta-bound}\fsbb}{k^{\ord+2}}<\frac{\Cr{ekd-reg-upper}\left(\jccbound+\fsbb\right)}{k^{\ord+2}}\end{split}
\]
So we require $\frac{\Cr{qkd-deviation-omega}\Cr{pkd-below}\sab}{8k^{\ord+2}}>\frac{\Cr{ekd-reg-upper}\left(\jccbound+\fsbb\right)}{k^{\ord+2}}$
together with $\frac{\Cr{qkd-deviation-omega}\sab}{8k^{\ord+2}}<\frac{\Cr{skd-radius}}{k}$.
This is possible for example when\[
\begin{split}\Cr{qkd-deviation-omega} & =\frac{8\Cr{ekd-reg-upper}}{\Cr{pkd-below}}\\
k & >\Cr{kk-qkd-deviation}\sab\geq\left(\frac{\Cr{qkd-deviation-omega}\sab}{8\Cr{skd-radius}}\right)^{\frac{1}{\ord+1}}\end{split}
\]

\end{enumerate}
\end{enumerate}
Thus we have completed the proof of \prettyref{lem:qkd-root-deviation}.\qedhere

\end{proof}
We can finally combine everything and prove the main result of this
section.
\begin{thm}
\label{thm:jump-final-accuracy}Let $\fun$ have $d_{1}\geq2d+1$
continuous derivatives everywhere in $\left[-\pi,\pi\right]\setminus\left\{ \jp\right\} $.
Let $q_{k}^{d}(z)$ be as defined in \eqref{eq:q-def}, and let $\left\{ \kappa_{i}^{\left(k,d\right)}\right\} _{i=0}^{\ord}$
denote its roots, such that $\left|\kappa_{0}^{\left(k,d\right)}\right|\leq\dots\left|\kappa_{\ord}^{\left(k,d\right)}\right|$
. Let $\left\{ \phi_{i}\right\} _{i=1}^{\ord}$ denote the roots of
the Laguerre polynomial $\lag 1{\ord}$, such that $\left|\phi_{1}\right|<\dots\left|\phi_{\ord}\right|$.
Let $y_{0}^{\left(k,d\right)}=\omega$ and $y_{i}^{\left(k,d\right)}=\uz^{-1}\left(-\frac{\phi_{i}}{k}\right)$
for $i=1,\dots,\ord$ (see \eqref{eq:ykd-def}). Then there exist
constants $\Cr{rootlower},\Cr{qkd-deviation-omega},\Cl{qkd-distance-skd}$
and $\Cl[kk]{kk-qkd-final}$ such that for every $k>\Cr{kk-qkd-final}\sab$
the following statemenets are true:
\begin{enumerate}
\item \label{enu:jump-final-i1}The numbers $\left\{ y_{i}^{\left(k,d\right)}\right\} $
lie on the ray $O\omega$, so that $\left|y_{i}^{\left(k,d\right)}\right|\geq1$,
and:\[
\Cr{rootlower}k^{-1}\leq\left|y_{i}^{(k,d)}-y_{j}^{(k,d)}\right|\qquad0\leq i<j\leq\ord\]

\item \label{enu:jump-final-i2}Each of the numbers $\left\{ \kappa_{i}^{\left(k,d\right)}\right\} _{i=1}^{\ord}$
is close to some $y_{i}^{\left(k,d\right)}$: \[
\left|\kappa_{i}^{\left(k,d\right)}-y_{i}^{\left(k,d\right)}\right|\leq\Cr{qkd-distance-skd}\cdot\sab\cdot k^{-2}\]

\item \label{enu:jump-final-i3}The smallest $\kappa_{0}^{\left(k,d\right)}$
is very close to $\omega$:\[
\left|\kappa_{0}^{\left(k,d\right)}-\omega\right|\leq\Cr{qkd-deviation-omega}\cdot\sab\cdot k^{-d-2}\]

\item \label{jump-final-i4}\prettyref{alg:rec-single-jump} provides an
approximation for $\omega$ which is accurate up to order $k^{-\left(\ord+2\right)}$.\end{enumerate}
\begin{proof}
We have already proved \eqref{enu:jump-final-i1} (for $k>\Cr{kk-rooty}$)
and \eqref{enu:jump-final-i3} (for $k>\Cr{kk-firstdev}\jcbound$)
- see \prettyref{lem:qkd-root-deviation}. \eqref{enu:jump-final-i2}
follows from \prettyref{lem:qkd-root-deviation} and \prettyref{lem:first-perturbation-characterization}
by choosing $\Cr{qkd-distance-skd}=\Cr{qkd-deviation-reg}+\Cr{firstdev}$
and $k>\Cr{kk-qkd-deviation}\sab$. In order to prove \eqref{jump-final-i4},
we need to show that no root $\kappa_{i}^{\left(k,d\right)}$ is closer
to the unit circle than $\kappa_{0}^{\left(k,d\right)}$. From geometric
considerations (see \prettyref{fig:root-geometry}), it is sufficient
to require that\[
\left|\kappa_{i}^{\left(k,d\right)}-y_{i}^{\left(k,d\right)}\right|\leq\Cr{qkd-distance-skd}\cdot\sab\cdot k^{-2}<\frac{1}{2}\Cr{rootlower}k^{-1}<\frac{1}{2}\min_{j\neq i}\left|y_{i}^{\left(k,d\right)}-y_{j}^{\left(k,d\right)}\right|\]
which is true whenever\[
k>\frac{2\Cr{qkd-distance-skd}\sab}{\Cr{rootlower}}\]
Therefore we choose\[
\Cr{kk-qkd-final}=\max\left(\Cr{kk-rooty},\Cr{kk-firstdev},\Cr{kk-qkd-deviation},\frac{2\Cr{qkd-distance-skd}}{\Cr{rootlower}}\right)\qedhere\]

\end{proof}
\end{thm}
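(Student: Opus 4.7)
The plan is to assemble parts (1)--(3) directly from the preceding lemmas and then to deduce part (4) from a geometric comparison of distances to the unit circle.

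For part (1), I would invoke the fact that the Laguerre polynomial $\lag{1}{d}$ is orthogonal on $(0,\infty)$, so all of its roots $\phi_i$ are strictly positive real. By \prettyref{cor:skd-laguerre-roots-correspondence} this means $\nn{\sigma}_i^{(k,d)} = -\phi_i/k$ and hence
\[
y_i^{(k,d)} \;=\; \uz^{-1}\bigl(-\phi_i/k\bigr) \;=\; \frac{\omega}{1 - \phi_i/k} \;=\; \omega\cdot\frac{k}{k-\phi_i},
\]
which for $k > \phi_\ord$ is a positive real multiple of $\omega$ of modulus strictly larger than $1$. This places all $y_i^{(k,d)}$ on the ray $O\omega$ outside the closed unit disk, and the lower separation bound is exactly \prettyref{lem:first-perturbation-characterization}(1). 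Part (2) follows from the triangle inequality
\[
\bigl|\kappa_i^{(k,d)} - y_i^{(k,d)}\bigr| \;\leq\; \bigl|\kappa_i^{(k,d)} - z_i^{(k,d)}\bigr| + \bigl|z_i^{(k,d)} - y_i^{(k,d)}\bigr|,
\]
bounding the first summand by \prettyref{lem:qkd-root-deviation} (case $i\geq 1$) and the second by \prettyref{lem:first-perturbation-characterization}(2); the two $k^{-2}$ bounds add to give $\Cr{qkd-distance-skd} = \Cr{qkd-deviation-reg} + \Cr{firstdev}$. Part (3) is simply the $i=0$ case of \prettyref{lem:qkd-root-deviation}, since $z_0^{(k,d)} = \omega$ by construction.

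The substantive step is part (4), where I must verify that the algorithm's selection rule actually singles out $\kappa_0^{(k,d)}$. On one hand, part (3) gives
\[
\bigl|\,|\kappa_0^{(k,d)}| - 1\,\bigr| \;\leq\; |\kappa_0^{(k,d)} - \omega| \;\leq\; \Cr{qkd-deviation-omega}\,\sab\, k^{-d-2}.
\]
On the other hand, for $i\geq 1$, part (1) ensures $|y_i^{(k,d)}| \geq 1$ and $|y_i^{(k,d)} - \omega| \geq \Cr{rootlower} k^{-1}$, so combining with part (2) via a second triangle inequality yields
\[
\bigl|\,|\kappa_i^{(k,d)}| - 1\,\bigr| \;\geq\; \Cr{rootlower}\,k^{-1} \;-\; \Cr{qkd-distance-skd}\,\sab\, k^{-2}.
\]
This quantity dominates $\Cr{qkd-deviation-omega}\,\sab\, k^{-d-2}$ as soon as $k > 2\Cr{qkd-distance-skd}\,\sab/\Cr{rootlower}$, so $\kappa_0^{(k,d)}$ is the unique root closest to the unit circle; the resulting approximation then inherits the $O(k^{-d-2})$ error from part (3). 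Taking $\Cr{kk-qkd-final}$ to be the maximum of this threshold and of the thresholds already required by \prettyref{lem:first-perturbation-characterization} and \prettyref{lem:qkd-root-deviation} finishes the proof. The only real obstacle here is careful bookkeeping of constants --- all the analytic work (Rouch\'e arguments, the Laguerre structure of $\nn s_k^d$, and the crucial cancellations produced by the extra smoothness $d_1 \geq 2d+1$) has already been carried out in the lemmas leading up to this point.
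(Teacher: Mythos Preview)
Your proposal is correct and follows essentially the same route as the paper: parts (1)--(3) are assembled from \prettyref{lem:first-perturbation-characterization} and \prettyref{lem:qkd-root-deviation} with the identical choice $\Cr{qkd-distance-skd}=\Cr{qkd-deviation-reg}+\Cr{firstdev}$, and part (4) is deduced from the separation/perturbation inequalities with the same threshold $k>2\Cr{qkd-distance-skd}\sab/\Cr{rootlower}$. Your treatment of (4) is in fact a bit more explicit than the paper's --- you compute the distance of each $\kappa_i^{(k,d)}$ to the unit circle directly, whereas the paper simply appeals to ``geometric considerations'' and \prettyref{fig:root-geometry} --- but the underlying idea is the same.
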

\begin{figure}
\newcommand{\writelabelsroots}{
\rput(115,131){Roots of $\lag{1}{\ord}$}
\rput(75,90){$\phi_1$}
\rput(100,90){$\phi_2$}
\rput(205,90){$\phi_{\ord}$}
\rput(325,170){\small $y_i^{(k,\ord)}=\uz^{-1}\left(-\frac{\phi_i}{k}\right)$}
\rput(479,145){\tiny $\omega$}
\rput(490,40){\tiny $\omega$}
\rput(517,143){\tiny $y_{1}$}
\rput(610,185){\tiny $y_{\ord}$}
\rput(590,110){\tiny $y_{i}$}
\rput(609,79){\tiny $\kappa_{i}$}
\rput(611,134){\tiny $z_{i}$}
\rput(517,37){\tiny $\kappa_{0}$}
\rput(560,10){\tiny $r\sim k^{-\ord-2}$}
\rput(643,62){\tiny $r\sim k^{-2}$}
\rput(517,185){\tiny $\sim k^{-1}$}
\rput(456,166){\tiny $|z|=1$}
}
\input{roots.tex}

\caption{The geometry of $\left\{ \kappa_{i}^{\left(k,d\right)}\right\} ,\left\{ y_{i}^{\left(k,d\right)}\right\} ,\left\{ z_{i}^{\left(k,d\right)}\right\} $.
The superscripts $\left(k,\ord\right)$ are omitted. The picture on
the right shrinks towards the unit circle as $k\to\infty$.}
\label{fig:root-geometry}

\end{figure}

\subsection{\label{sub:jump-magnitudes-accuracy-analysis}Accuracy analysis:
jump magnitudes}

\global\long\def\nc{\ensuremath{B}}
\global\long\def\dfer{\ensuremath{\eta}}
\global\long\def\cfer{\ensuremath{\varepsilon}}
\newcommandx\mult[2][usedefault, addprefix=\global, 1=d, 2=k]{\ensuremath{S_{#2,#1}}}
\newcommandx\partialrow[1][usedefault, addprefix=\global, 1=k]{\ensuremath{\vec{v_{#1}}}}

Suppose that $d_{1}\geq2d+1$ and let $k>\Cr{kk-qkd-final}\sab$ so
that our algorithm gives an approximation $\nn{\omega}_{(k)}$ with
error at most $\Cr{qkd-deviation-omega}\sab\cdot k^{-d-2}$, in accordance
with \prettyref{thm:jump-final-accuracy}. Our goal is to analyze
the accuracy of calculating the approximate jump magnitudes $\jc_{l}^{(k)}$,
given by the solution of the linear system \eqref{eq:approximate-system-jump-magnitudes}.
For convenience, we denote\begin{eqnarray*}
\nc_{l} & \isdef\imath^{l}\jc_{d-l}\\
\nn{\nc}_{l}^{(k)} & \isdef\imath^{l}\nn{\jc}_{d-l}^{(k)}\end{eqnarray*}

We consider only the case of exactly $d+1$ equations. Thus we can
write this system in the following form: \begin{equation}
\left[\begin{array}{c}
r_{k}\widetilde{\omega}_{(k)}^{-k}\\
\vdots\\
r_{k+d}\widetilde{\omega}_{(k)}^{-k-d}\end{array}\right]=V_{k}\times\left[\begin{array}{c}
\nn{\nc}_{0}^{(k)}\\
\vdots\\
\nn{\nc}_{d}^{(k)}\end{array}\right]\label{eq:approx-coeffs-matrix-system}\end{equation}
where $V_{k}$ is the $(d+1)\times(d+1)$ system matrix\[
V_{k}\isdef\left[\begin{array}{cccc}
1 & k & \dotsc & k^{d}\\
1 & (k+1) & \dotsc & (k+1)^{d}\\
\vdots & \vdots & \vdots & \vdots\\
1 & (k+d) & \dotsc & (k+d)^{d}\end{array}\right]\]

By \eqref{eq:exact-system-jump-magnitudes}, the ``true'' coefficients
$\nc_{j}$ satisfy\begin{equation}
\left[\begin{array}{c}
m_{k}\omega^{-k}\\
\vdots\\
m_{k+d}\omega^{-k-d}\end{array}\right]=V_{k}\times\left[\begin{array}{c}
\nc_{0}\\
\vdots\\
\nc_{d}\end{array}\right]\label{eq:exact-coeffs-matrix-system}\end{equation}

Our goal is to estimate the error $\cfer_{j}^{(k)}\isdef\nc_{j}-\nn{\nc}_{j}^{(k)}$
for $j=0,1,\dots,\ord$. Let\[
\dfer_{j}^{(k)}\isdef r_{k+j}\nn{\omega}_{(k)}^{-k-j}-m_{k+j}\omega^{-k-j}\]
Then subtracting \eqref{eq:exact-coeffs-matrix-system} from \eqref{eq:approx-coeffs-matrix-system}
gives\begin{equation}
\left[\begin{array}{c}
\cfer_{0}^{(k)}\\
\cfer_{1}^{(k)}\\
\vdots\\
\cfer_{d}^{(k)}\end{array}\right]=V_{k}^{-1}\times\left[\begin{array}{c}
\dfer_{0}^{(k)}\\
\dfer_{1}^{(k)}\\
\vdots\\
\dfer_{d}^{(k)}\end{array}\right]\label{eq:coeffs-accuracy-matrix-estimate}\end{equation}

This is the key relation of this section. In order to estimate the
magnitude of $\cfer_{j}^{(k)}$, we shall first write out explicit
expansions for the quantities $\dfer_{j}^{(k)}$, and then investigate
how these expansions are transformed when multiplied by the matrix
$V_{k}^{-1}$. Our analysis will show that the special combination
of the structures of both this matrix and the expansion coefficients
results in remarkable cancellations.

Let us start by investigating the structure of the matrix $V_{k}$.
\begin{defn}
Let $\mult$ denote the $(d+1)\times(d+1)$ square matrix with entries:\[
\left(\mult[r]\right)_{m,n}=(-k)^{n-m}\binom{n-1}{n-m}\]
\end{defn}
\begin{example}
For $d=4$, we have\[
\mult[4]=\left(\begin{array}{ccccc}
1 & -k & k^{2} & -k^{3} & k^{4}\\
0 & 1 & -2k & 3k^{2} & -4k^{3}\\
0 & 0 & 1 & -3k & 6k^{2}\\
0 & 0 & 0 & 1 & -4k\\
0 & 0 & 0 & 0 & 1\end{array}\right)\]
\end{example}
\begin{defn}
For every $k\in\naturals$ let the symbol $\partialrow$ denote the
following $1\times(d+1)$ row vector\[
\partialrow\isdef\left[\begin{array}{cccc}
1 & k & \dots & k^{d}\end{array}\right]\]

\end{defn}
With this definition, we have\begin{equation}
V_{k}=\left[\begin{array}{c}
\partialrow\\
\partialrow[k+1]\\
\vdots\\
\partialrow[k+d]\end{array}\right]\label{eq:vv-def}\end{equation}

\begin{lem}
Let $k\geq0$, then\begin{equation}
V_{k}^{-1}=\mult\times V_{0}^{-1}\label{eq:vv-dependence-m}\end{equation}
\end{lem}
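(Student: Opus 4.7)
The plan is to verify the equivalent identity $V_{0}=V_{k}\cdot\mult$, from which the claim follows upon noting that $V_{0}$ is the Vandermonde matrix on the distinct nodes $0,1,\dots,d$ and is therefore invertible. Thus I would begin by observing that if one multiplies the proposed equation $V_{k}^{-1}=\mult\times V_{0}^{-1}$ on the right by $V_{0}$, one obtains $V_{k}^{-1}V_{0}=\mult$, i.e.\ $V_{0}=V_{k}\mult$, so the whole question reduces to this single matrix identity.

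Next, I would compute the $(i+1,j+1)$ entry of the product $V_{k}\mult$ directly from the definitions. Using \eqref{eq:vv-def} and the explicit formula $(\mult)_{m,n}=(-k)^{n-m}\binom{n-1}{n-m}$ (with the standard convention $\binom{n-1}{n-m}=0$ when $n<m$, which gives $\mult$ its upper-triangular shape), one has
\[
(V_{k}\mult)_{i+1,j+1}=\sum_{n=1}^{d+1}(k+i)^{n-1}(-k)^{j+1-n}\binom{j}{j+1-n}.
\]
Substituting $m=j+1-n$ collapses this to $\sum_{m=0}^{j}\binom{j}{m}(k+i)^{j-m}(-k)^{m}$, which by the binomial theorem equals $\bigl((k+i)-k\bigr)^{j}=i^{j}$. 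Since $(V_{0})_{i+1,j+1}=i^{j}$, the identity $V_{0}=V_{k}\mult$ is established entry by entry.

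Multiplying on the left by $V_{k}^{-1}$ and on the right by $V_{0}^{-1}$ then yields \eqref{eq:vv-dependence-m}. The only place where anything could go wrong is the index bookkeeping: one must be careful that the binomial $\binom{n-1}{n-m}$ is taken to vanish for $n<m$ so that the sum over $n\in\{1,\dots,d+1\}$ truncates to $n\in\{m,\dots,j+1\}$, matching the truncation of the binomial expansion of $((k+i)-k)^{j}$. Beyond this, the proof is a pure algebraic manipulation and presents no real obstacle; the structural content, which will be exploited in the subsequent accuracy analysis, is that the dependence of $V_{k}^{-1}$ on $k$ is captured entirely by the explicit triangular factor $\mult$.
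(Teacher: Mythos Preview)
Your proof is correct and follows essentially the same route as the paper: both verify the equivalent identity $V_{k}\,\mult=V_{0}$ entry by entry via the binomial theorem, the paper phrasing it as $\partialrow[k+t]\,\mult=\partialrow[t]$ for each row while you index directly by $(i+1,j+1)$. The underlying computation and the key observation (that the binomial expansion of $((k+i)-k)^{j}$ appears) are identical.
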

\begin{proof}
Let $1\leq m\leq d+1$ and $0\leq t\leq d$. The $m$-th entry of
the vector $\partialrow[k+t]\times\mult$ equals to\[
\left(\partialrow[k+t]\times\mult\right)_{m}=\sum_{l=0}^{m-1}(k+t)^{l}(-k)^{m-1-l}\binom{m-1}{m-1-l}=t^{m-1}\]
and therefore\[
\partialrow[k+t]\times\mult=\partialrow[t]\]
\eqref{eq:vv-dependence-m} then follows from \eqref{eq:vv-def}.
\end{proof}
Now we would like to expand $\dfer_{j}^{(k)}$. We can obviously assume
the equality \[
\nn{\omega}_{(k)}=\omega+\frac{\alpha\left(k\right)}{k^{\ord+2}}\qquad\text{such that}\qquad\left|\alpha\left(k\right)\right|\leq\Cr{qkd-deviation-omega}\sab\]

Now we estimate $\nn{\omega}_{\left(k\right)}$ by \prettyref{prop:neg-exp-estimate}
as follows:\[
\begin{split}\left(\omega+\frac{\alpha(k)}{k^{\ord+2}}\right)^{-\left(k+j\right)} & =\omega^{-k-j}\left(1+\frac{\alpha(k)\omega^{-1}}{k^{d+2}}\right)^{-k-j}=\omega^{-k-j}\left(1-\left(k+j\right)\frac{\alpha\left(k\right)\omega^{-1}}{k^{\ord+2}}+\Cl[rr]{rr-newomega}\left(k,j\right)\right)\end{split}
\]
where $k$ is large enough so that $\frac{\alpha(k)\omega^{-1}}{k^{d+2}}<\frac{3}{k+j+2}$
is satisfied, and \[
\left|\Cr{rr-newomega}\left(k,j\right)\right|<\frac{\left(k+j\right)\left(k+j+1\right)\alpha^{2}\left(k\right)\omega^{-2}}{2k^{2\left(\ord+2\right)}\left(1-\frac{\alpha(k)\omega^{-1}\left(k+j+2\right)}{3k^{\ord+2}}\right)}<\C\cdot\sab^{2}k^{-2\ord-3}\]
Obviously, $\left|r_{k}\right|\leq\C\cdot\sab\cdot k^{\ord}$. Now
by \eqref{eq:poly-coeffs-perturbation}, we have\[
\begin{split}\dfer_{j}^{(k)} & =\left(m_{k+j}+\omega^{k+j}\cdot\sum_{l=1}^{d_{1}-d}\frac{A_{d+l}}{\left(\imath\left(k+j\right)\right)^{l}}+\delta_{k+j}\right)\nn{\omega}_{(k)}^{-k-j}-m_{k+j}\omega^{-k-j}\\
 & =\left(m_{k+j}+\omega^{k+j}\cdot\sum_{l=1}^{d_{1}-d}\frac{A_{d+l}}{\left(\imath\left(k+j\right)\right)^{l}}+\delta_{k+j}\right)\omega^{-k-j}\left(1-\frac{\left(k+j\right)\alpha\left(k\right)\omega^{-1}}{k^{d+2}}\right)-m_{k+j}\omega^{-k-j}+r_{k+j}\omega^{-k-j}\Cr{rr-newomega}\left(k\right)\\
 & =\frac{\beta\left(k\right)}{k^{d+2}}\sum_{l=0}^{d}\nc_{l}\left(k+j\right)^{l+1}+\sum_{l=1}^{d_{1}-d}\frac{A_{d+l}}{\left(\imath\left(k+j\right)\right)^{l}}+\Cl[rr]{rr-etak}\left(k,j\right)\end{split}
\]

where $\left|\Cr{rr-etak}\left(k,j\right)\right|\leq\C\cdot\sab^{2}k^{-\ord-2}$
and $\left|\beta\left(k\right)\right|\leq\C\cdot\sab$.

Therefore we can write \begin{equation}
\begin{split}\left[\begin{array}{c}
\dfer_{0}^{(k)}\\
\vdots\\
\dfer_{j}^{(k)}\\
\vdots\\
\dfer_{d}^{(k)}\end{array}\right] & =\beta\left(k\right)\nc_{0}\left[\begin{array}{c}
\frac{k}{k^{\ord+2}}\\
\vdots\\
\frac{k+j}{k^{\ord+2}}\\
\vdots\\
\frac{k+\ord}{k^{\ord+2}}\end{array}\right]+\dots+\beta\left(k\right)\nc_{l}\left[\begin{array}{c}
\frac{k^{l}}{k^{\ord+2}}\\
\vdots\\
\frac{\left(k+j\right)^{l}}{k^{\ord+2}}\\
\vdots\\
\frac{\left(k+\ord\right)^{l}}{k^{\ord+2}}\end{array}\right]+\dots+\beta\left(k\right)\nc_{\ord}\left[\begin{array}{c}
\frac{k^{\ord+1}}{k^{\ord+2}}\\
\vdots\\
\frac{\left(k+j\right)^{\ord+1}}{k^{\ord+2}}\\
\vdots\\
\frac{\left(k+\ord\right)^{\ord+1}}{k^{\ord+2}}\end{array}\right]\\
 & +\frac{\jc_{\ord+1}}{\imath}\left[\begin{array}{c}
\frac{1}{k}\\
\vdots\\
\frac{1}{k+j}\\
\vdots\\
\frac{1}{k+\ord}\end{array}\right]+\dots+\frac{\jc_{\ord+l}}{\imath^{l}}\left[\begin{array}{c}
\frac{1}{k^{l}}\\
\vdots\\
\frac{1}{\left(k+j\right)^{l}}\\
\vdots\\
\frac{1}{\left(k+\ord\right)^{l}}\end{array}\right]+\dots+\frac{\jc_{d_{1}}}{\imath^{\ord}}\left[\begin{array}{c}
\frac{1}{k^{\ord+1}}\\
\vdots\\
\frac{1}{\left(k+j\right)^{\ord+1}}\\
\vdots\\
\frac{1}{\left(k+\ord\right)^{\ord+1}}\end{array}\right]+\left[\begin{array}{c}
\Cr{rr-etak}\left(k,0\right)\\
\vdots\\
\Cr{rr-etak}\left(k,j\right)\\
\vdots\\
\Cr{rr-etak}\left(k,\ord\right)\end{array}\right]\end{split}
\label{eq:error-expansion-at-infinity}\end{equation}

In light of \eqref{eq:coeffs-accuracy-matrix-estimate}, we would
now like to examine the action of the matrix $V_{0}^{-1}$ on the
vectors in the right hand side of \eqref{eq:error-expansion-at-infinity}.
\begin{lem}
Let $j=0,1,\dots,\ord$.
\begin{enumerate}
\item If $l=1,2,\dots,\ord$ then \begin{equation}
\left[\begin{array}{c}
k^{l}\\
\vdots\\
\left(k+j\right)^{l}\\
\vdots\\
\left(k+\ord\right)^{l}\end{array}\right]=V_{0}\times\left[\begin{array}{c}
k^{l}\binom{l}{0}\\
\vdots\\
k^{l-j}\binom{l}{j}\\
\vdots\\
1\cdot\binom{l}{l}\\
0\\
\vdots\\
0\end{array}\right]\label{eq:action-v0-1-1}\end{equation}

\item Otherwise, there exists a function $\Cl[rr]{rr-powers}:\left\{ 0,1,\dots,\ord\right\} \to\reals$
such that \begin{equation}
\left[\begin{array}{c}
k^{\ord+1}\\
\vdots\\
\left(k+j\right)^{\ord+1}\\
\vdots\\
\left(k+\ord\right)^{\ord+1}\end{array}\right]=V_{0}\times\left[\begin{array}{c}
k^{\ord+1}\binom{\ord+1}{0}\\
\vdots\\
k^{\ord+1-j}\binom{\ord+1}{j}\\
\vdots\\
k\binom{\ord+1}{1}\end{array}\right]+\left[\begin{array}{c}
\Cr{rr-powers}\left(0\right)\\
\vdots\\
\Cr{rr-powers}\left(j\right)\\
\vdots\\
\Cr{rr-powers}\left(\ord\right)\end{array}\right]\label{eq:action-v0-1-2}\end{equation}

\end{enumerate}
\end{lem}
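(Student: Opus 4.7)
The plan is to prove both parts by a direct binomial expansion of $(k+j)^{l}$ and then simply recognize the result as the $j$-th coordinate of $V_{0}\vec{w}$ for an appropriate vector $\vec{w}$. Recall that the rows of $V_{0}$ are indexed $j=0,1,\dots,d$ and its $(j,m)$-entry is $j^{m}$ (with $0^{0}=1$), so for any vector $\vec{w}=[w_{0},\dots,w_{d}]^{T}$ the $j$-th entry of $V_{0}\vec{w}$ equals $\sum_{m=0}^{d}j^{m}w_{m}$.

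For part (1), fix $1\leq l\leq d$ and apply the binomial theorem:
\[
(k+j)^{l}=\sum_{m=0}^{l}\binom{l}{m}k^{l-m}j^{m}=\sum_{m=0}^{d}j^{m}w_{m},
\]
where we set $w_{m}=\binom{l}{m}k^{l-m}$ for $0\leq m\leq l$ and $w_{m}=0$ for $l<m\leq d$. Since the identity holds for every $j\in\{0,1,\dots,d\}$, the column vector $\bigl[(k+j)^{l}\bigr]_{j=0}^{d}$ is exactly $V_{0}\vec{w}$ with $\vec{w}$ as in the statement, which settles \eqref{eq:action-v0-1-1}.

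For part (2), take $l=d+1$. The binomial expansion now has one extra term that $V_{0}\vec{w}$ cannot absorb (the matrix only provides powers $j^{0},\dots,j^{d}$ of $j$):
\[
(k+j)^{d+1}=\sum_{m=0}^{d}\binom{d+1}{m}k^{d+1-m}j^{m}+j^{d+1}.
\]
Setting $w_{m}=\binom{d+1}{m}k^{d+1-m}$ for $0\leq m\leq d$ and $\Cr{rr-powers}(j)\isdef j^{d+1}$, the first sum is the $j$-th entry of $V_{0}\vec{w}$ (with $\vec{w}$ the vector appearing on the right-hand side of \eqref{eq:action-v0-1-2}) and the remainder is a bounded function of $j\in\{0,1,\dots,d\}$ independent of $k$, as required.

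There is no real obstacle here: both identities are immediate consequences of the binomial theorem combined with the definition of $V_{0}$. The only thing worth noting is that the ``remainder'' $\Cr{rr-powers}(j)=j^{d+1}$ in part (2) is in fact independent of $k$, which is precisely the property that will be exploited in the subsequent cancellation analysis of $V_{0}^{-1}$ acting on \eqref{eq:error-expansion-at-infinity}.
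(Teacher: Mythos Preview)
Your proof is correct and takes essentially the same approach as the paper, which simply says ``Straightforward application of the binomial theorem.'' Your version spells out the details (including the explicit identification $\Cr{rr-powers}(j)=j^{d+1}$), but the underlying argument is identical.
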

\begin{proof}
Straightforward application of the binomial theorem.
\end{proof}
\global\long\def\co{\ensuremath{\tau}}

\begin{lem}
\label{lem:action-v0}For $j=1,2,\dotsc$ and $i=1,\dotsc,d+1$ denote\[
\co_{j}^{i}\isdef(-1)^{i-1}\binom{j+i-2}{j-1}\]
Then there exists a bounded function $\Cl[rr]{rr-powers2}:\left\{ 0,1,\dots,\ord\right\} \times\naturals\to\reals$
such that\begin{equation}
\left[\begin{array}{c}
\frac{1}{k^{j}}\\
\frac{1}{\left(k+1\right)^{j}}\\
\vdots\\
\frac{1}{\left(k+d\right)^{j}}\end{array}\right]=V_{0}\times\left[\begin{array}{c}
\frac{\co_{j}^{1}}{k^{j}}\\
\frac{\co_{j}^{2}}{k^{j+1}}\\
\vdots\\
\frac{\co_{j}^{d+1}}{k^{j+d}}\end{array}\right]+\frac{1}{k^{\ord+j+1}}\left[\begin{array}{c}
\Cr{rr-powers2}\left(0,j\right)\\
\vdots\\
\Cr{rr-powers2}\left(l,j\right)\\
\vdots\\
\Cr{rr-powers2}\left(\ord,j\right)\end{array}\right]\label{eq:action-v0-2}\end{equation}
\end{lem}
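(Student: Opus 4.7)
The strategy is to expand $\tfrac{1}{(k+l)^{j}}$ as a Taylor polynomial in $1/k$ about $l=0$, identify the polynomial part as $V_{0}$ applied to the claimed coefficient vector, and estimate the tail using the Lagrange form of the remainder.

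First I would write, for $l=0,1,\dots,\ord$ and $k\geq 1$,
\[
\frac{1}{(k+l)^{j}}=\frac{1}{k^{j}}\left(1+\frac{l}{k}\right)^{-j}=\frac{1}{k^{j}}\sum_{n=0}^{d}\binom{-j}{n}\left(\frac{l}{k}\right)^{n}+R_{l,j}(k),
\]
where the Taylor coefficients satisfy $\binom{-j}{n}=(-1)^{n}\binom{j+n-1}{n}$. Using the symmetry $\binom{j+n-1}{n}=\binom{j+n-1}{j-1}$ and setting $i=n+1$ in the definition of $\co_{j}^{i}$, I obtain $(-1)^{n}\binom{j+n-1}{n}=\co_{j}^{n+1}$, so the explicit sum above equals $\sum_{n=0}^{d}\co_{j}^{n+1}\,l^{n}k^{-j-n}$. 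Since $(V_{0})_{l+1,n+1}=l^{n}$, this partial sum is precisely the $(l{+}1)$-st entry of $V_{0}\times\bigl[\co_{j}^{1}/k^{j},\dots,\co_{j}^{d+1}/k^{j+d}\bigr]^{T}$, matching the leading term in \eqref{eq:action-v0-2}.

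It remains to bound $R_{l,j}(k)$. Applying Taylor's theorem with Lagrange remainder to $f(x)=(1+x)^{-j}$ at $x=l/k$, there exists $\xi\in[0,l/k]$ such that
\[
\left(1+\frac{l}{k}\right)^{-j}-\sum_{n=0}^{d}\binom{-j}{n}\left(\frac{l}{k}\right)^{n}=(-1)^{d+1}\binom{j+d}{d+1}(1+\xi)^{-j-d-1}\left(\frac{l}{k}\right)^{d+1}.
\]
Because $\xi\geq 0$ and $j+d+1>0$, we have $(1+\xi)^{-j-d-1}\leq 1$, and multiplying by $k^{-j}$ gives
\[
|R_{l,j}(k)|\;\leq\;\binom{j+d}{d+1}\,\frac{l^{d+1}}{k^{\ord+j+1}}.
\]
Thus I may define $\Cr{rr-powers2}(l,j)\isdef\binom{j+d}{d+1}\,l^{d+1}$, which is finite for every $(l,j)$ and, for each fixed $j$, bounded on the finite index set $\{0,1,\dots,\ord\}$. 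Plugging this back yields exactly \eqref{eq:action-v0-2}.

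No step here is expected to be difficult: the algebraic identification of the Taylor coefficients with $\co_{j}^{i}$ reduces to the symmetry $\binom{j+n-1}{n}=\binom{j+n-1}{j-1}$, and the Lagrange remainder delivers the prescribed power $k^{-\ord-j-1}$ at once. The only mild point of care is matching the sign convention $(-1)^{i-1}$ in the definition of $\co_{j}^{i}$ with $(-1)^{n}$ from the binomial expansion, which is resolved by the substitution $i=n+1$.
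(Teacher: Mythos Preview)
Your proof is correct and follows essentially the same route as the paper: expand $\frac{1}{(k+l)^{j}}=\frac{1}{k^{j}}(1+l/k)^{-j}$ as a Taylor polynomial of degree $d$ in $l/k$, identify the coefficients with $\co_{j}^{i}$ via $\binom{-j}{n}=(-1)^{n}\binom{j+n-1}{j-1}$, and control the tail. The only cosmetic difference is that the paper bounds the remainder by invoking its \prettyref{prop:neg-exp-estimate} (geometric-series majorization), whereas you use the Lagrange form directly; both yield the same $k^{-(\ord+j+1)}$ order. One small wording point: you should define $\Cr{rr-powers2}(l,j)$ as the actual remainder $k^{\ord+j+1}R_{l,j}(k)$ (which implicitly depends on $k$) and then observe that your Lagrange estimate bounds it by $\binom{j+d}{d+1}l^{d+1}$, rather than setting $\Cr{rr-powers2}$ equal to that bound --- the lemma asserts an equality, not an inequality. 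This is the same level of precision as the paper's own proof, and the resulting $\Cr{rr-powers2}$ is in either case bounded for each fixed $j$ (which is all that is used downstream, since $j$ ranges over $1,\dots,d_{1}-d$).
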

\begin{proof}
First recall the well-known%
\footnote{It can be proven by induction on $j$, using the identity $\sum_{k=0}^{n}\binom{r+k}{r}=\binom{r+n+1}{r+1}$.%
} power series expansion\[
\frac{1}{(1+x)^{j}}=\sum_{n=0}^{\infty}(-1)^{n}\binom{j-1+n}{j-1}x^{n}\]

Now let $l=0,1,\dotsc,d$.
\begin{enumerate}
\item On one hand, the $(l+1)$-st entry in the product on the right-hand
side of \eqref{eq:action-v0-2} equals to\[
g_{j,l}\isdef\sum_{i=0}^{d}(-1)^{i}\frac{\binom{j-1+i}{i}}{k^{i+j}}l^{i}\]

\item On the other hand, by \prettyref{prop:neg-exp-estimate} we have for
some bounded function $\Cr{rr-powers2}:\left\{ 0,1,\dots,\ord\right\} \times\naturals\to\reals$
\[
\begin{split}\frac{1}{(k+l)^{j}} & =\frac{1}{k^{j}}\cdot\frac{1}{\left(1+\frac{l}{k}\right)^{j}}=\frac{1}{k^{j}}\left\{ \sum_{i=0}^{d}(-1)^{i}\binom{j-1+i}{j-1}\left(\frac{l}{k}\right)^{i}+\frac{\Cr{rr-powers2}\left(l,j\right)}{k^{d+1}}\right\} \\
 & =\underbrace{\sum_{i=0}^{d}(-1)^{i}\frac{\binom{j-1+i}{i}}{k^{i+j}}l^{i}}_{=g_{j,l}}+\frac{\Cr{rr-powers2}\left(l,j\right)}{k^{j+d+1}}\end{split}
\]

\end{enumerate}
Thus \eqref{eq:action-v0-2} is proved.\qedhere

\end{proof}
It is now easily seen that the multiplication by $V_{0}^{-1}$ {}``orders
up'' the vectors in \eqref{eq:error-expansion-at-infinity} by decreasing
powers of $k$. Further multiplication by $\mult$ from the left preserves
this structure, as is evident from the following calculation.

\global\long\def\oco{\ensuremath{\gamma}}

\begin{lem}
\label{lem:action-s}Let $c_{i,j}$ be arbitrary constants. Then there
exist constants $\oco_{i,j}$ such that\begin{equation}
\mult\times\left[\begin{array}{c}
\frac{c_{1,j}}{k^{j}}\\
\frac{c_{2,j}}{k^{j+1}}\\
\vdots\\
\frac{c_{d+1,j}}{k^{j+d}}\end{array}\right]=\left[\begin{array}{c}
\frac{\oco_{1,j}}{k^{j}}\\
\frac{\oco_{2,j}}{k^{j+1}}\\
\vdots\\
\frac{\oco_{d+1,j}}{k^{j+d}}\end{array}\right]\label{eq:action-s}\end{equation}
\end{lem}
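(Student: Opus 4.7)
The plan is a direct computation using the upper-triangular structure of $\mult$. First I would record the key observation that $(\mult)_{m,n} = (-k)^{n-m}\binom{n-1}{n-m}$ vanishes whenever $n < m$, so $\mult$ is upper triangular. This is exactly what we need, because the claim asserts that the product vector has a very specific $k$-scaling pattern in each coordinate, and this can only hold if the action of $\mult$ does not mix high-order terms into low-order slots.

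Next, I would write out the $m$-th coordinate of the product directly. Applying the definition of $\mult$ to the input vector gives
\[
\bigl(\mult \cdot v\bigr)_m \;=\; \sum_{n=m}^{d+1} (-k)^{n-m}\binom{n-1}{n-m}\,\frac{c_{n,j}}{k^{j+n-1}}.
\]
The crucial cancellation is then purely algebraic: the factor $(-k)^{n-m}$ meets $k^{-(j+n-1)}$ to produce $(-1)^{n-m}\,k^{-(j+m-1)}$, a single power of $k$ that is independent of the summation index $n$. Factoring it out yields
\[
\bigl(\mult \cdot v\bigr)_m \;=\; \frac{1}{k^{j+m-1}}\sum_{n=m}^{d+1}(-1)^{n-m}\binom{n-1}{n-m} c_{n,j},
\]
which is precisely of the form $\oco_{m,j}/k^{j+m-1}$ claimed by \eqref{eq:action-s}, with the explicit identification
\[
\oco_{m,j} \;\isdef\; \sum_{n=m}^{d+1}(-1)^{n-m}\binom{n-1}{n-m} c_{n,j}.
\]

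There is really no obstacle here. The whole lemma is a bookkeeping statement: it records that the matrix $\mult$ was designed so that the exponent $n-m$ in its entries compensates the drop $k^{-(n-1)} \to k^{-(m-1)}$ between rows of a ``tail expansion'' vector. The only mildly non-trivial piece is noting that the sum in $\oco_{m,j}$ is finite because of the upper-triangularity, so no boundedness or asymptotic estimates are needed. Together with \prettyref{lem:action-v0}, this lemma will then allow, in the subsequent argument, the matrix product $\mult\times V_0^{-1}$ applied to the expansion \eqref{eq:error-expansion-at-infinity} to preserve the coordinate-wise decay rates, which is what drives the final cancellations in the accuracy analysis of the jump magnitudes.
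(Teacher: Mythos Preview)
Your proposal is correct and follows essentially the same approach as the paper: a direct computation of the $m$-th entry of the product, using the upper-triangular structure of $\mult$ so that the factor $(-k)^{n-m}$ cancels against $k^{-(j+n-1)}$ to leave $k^{-(j+m-1)}$. Your explicit formula for $\oco_{m,j}$ agrees with the paper's (after the index shift $l+1=n$).
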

\begin{proof}
Let $i=1,\dotsc,d+1$ and consider the $i$-th entry of the product,
say $y_{i}$:

\[
\begin{split}y_{i} & =\sum_{l=1}^{d+1}\left(\mult\right)_{i,l}\times\frac{c_{l,j}}{k^{j+l-1}}=\sum_{l=0}^{d}(-k)^{l+1-i}\binom{l}{l+1-i}\times\frac{c_{l+1,j}}{k^{l+j}}=\frac{1}{k^{j+i-1}}\oco_{i,j}\end{split}
\]

where $\oco_{i,j}=\sum_{l=i-1}^{d}(-1)^{l+1-i}\binom{l}{l-(i-1)}c_{l+1,j}$.
This proves the claim.
\end{proof}
We can now prove the main result of this section. 
\begin{thm}
\label{thm:magnitude-final-accuracy}Assume that $d_{1}\geq2d+1$
and $k>\Cr{kk-qkd-final}\sab$, so that by \prettyref{thm:jump-final-accuracy}
we have $\left|\nn{\omega}_{(k)}-\omega\right|\leq\Cr{qkd-deviation-omega}\cdot\sab\cdot k^{-d-2}$.
Then there exist constants $\Cl{magnitude-final},\Cl[kk]{kk-magnitude-final}$
such that for every $k>\Cr{kk-magnitude-final}\sab$ and $l=0,1,\dotsc d$
the error in determining $\jc_{l}$ is \[
\left|\nn{\jc}_{l}^{(k)}-\jc_{l}\right|\leq\Cr{magnitude-final}\cdot\sab^{2}\cdot k^{l-d-1}\]
\end{thm}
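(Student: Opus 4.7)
The starting point is the matrix identity \eqref{eq:coeffs-accuracy-matrix-estimate} together with the factorization $V_k^{-1}=\mult\,V_0^{-1}$ from \eqref{eq:vv-dependence-m}, so that $\vec\cfer=\mult\,V_0^{-1}\,\vec\dfer$. The plan is to substitute the block decomposition \eqref{eq:error-expansion-at-infinity} of $\vec\dfer$, apply $V_0^{-1}$ to each block using \eqref{eq:action-v0-1-1}, \eqref{eq:action-v0-1-2} and \prettyref{lem:action-v0}, and then apply $\mult$ via \prettyref{lem:action-s}, ultimately showing that the $i$-th component of the result has magnitude $O(\sab^2\cdot k^{-i})$ for every $i=1,\dotsc,\ord+1$. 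Since $\nc_j=\imath^j\jc_{\ord-j}$, this bound on $\cfer_{\ord-l}^{(k)}$ is equivalent to the claimed bound on $|\nn\jc_l^{(k)}-\jc_l|$ after setting $i=\ord-l+1$, so that $k^{-i}=k^{l-\ord-1}$.

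For a ``polynomial'' block $(\beta(k)\nc_l/k^{\ord+2})\,[(k+j)^{l+1}]_{j=0}^{\ord}$ with $l\leq\ord-1$, \eqref{eq:action-v0-1-1} expresses $V_0^{-1}$ applied to $[(k+j)^{l+1}]_j$ exactly as a polynomial vector supported in the first $l+2$ positions, the $n$-th entry being $k^{l+2-n}\binom{l+1}{n-1}$. Multiplying by $\mult$ collapses the $i$-th entry into $k^{l+2-i}\cdot\Sigma_{i,l}$, where $\Sigma_{i,l}=\sum_{n=i}^{l+2}(-1)^{n-i}\binom{n-1}{n-i}\binom{l+1}{n-1}$ is a bounded combinatorial sum independent of $k$. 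Including the prefactor $\beta(k)\nc_l/k^{\ord+2}$, the contribution to $\cfer_{i-1}^{(k)}$ is of order $\sab^2\cdot k^{l-\ord-i}$, strictly smaller than $k^{-i}$ whenever $l<\ord$. The critical case $l=\ord$ uses \eqref{eq:action-v0-1-2} instead: its main structured part yields the dominant contribution $O(\sab^2\cdot k^{-i})$, while its bounded correction vector, after passing through the $O(k^{\ord+1-i})$ entries in the $i$-th row of $\mult$, adds only an $O(\sab^2\cdot k^{-i-1})$ term.

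For a ``smoothness'' block $(\jc_{\ord+l}/\imath^l)\,[1/(k+j)^l]_{j=0}^{\ord}$, \prettyref{lem:action-v0} decomposes $V_0^{-1}$ applied to it into the structured vector $[\co_l^n/k^{l+n-1}]_{n=1}^{\ord+1}$ plus a remainder of order $k^{-l-\ord-1}$. The key observation, visible from \prettyref{lem:action-s}, is that every summand in the $i$-th entry of $\mult$ applied to this structured vector contributes the same power of $k$, namely $k^{1-i-l}$, so the total contribution is $O(\sab\cdot k^{1-i-l})$; the dominant $l=1$ case gives $O(\sab\cdot k^{-i})$. The small remainder contributes at most $O(k^{\ord+1-i}\cdot k^{-l-\ord-1})=O(k^{-l-i})$, while the auxiliary error $\Cr{rr-etak}(k,j)=O(\sab^2\cdot k^{-\ord-2})$ from the last column of \eqref{eq:error-expansion-at-infinity} propagates through $\mult\,V_0^{-1}$ into an $O(\sab^2\cdot k^{-i-1})$ term by the same row-bound on $\mult$. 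This last step is where the hypothesis $d_1\geq 2\ord+1$ is used, since it forces $\delta_k$ and hence $\Cr{rr-etak}$ to decay fast enough to be absorbed.

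Summing all contributions over blocks yields $|\cfer_{i-1}^{(k)}|\leq\Cr{magnitude-final}\sab^2\cdot k^{-i}$, equivalent to the stated bound. The main technical obstacle is the algebraic cancellation that collapses each structured vector into a single power of $k$ after multiplication by $\mult$, thereby neutralising the $O(k^{\ord})$ amplification inherent in the rows of $\mult$. Verifying this reduces to careful bookkeeping of binomial identities, confirming that the sums $\Sigma_{i,l}$ and the analogous sums $\sum_n(-1)^{n-i}\binom{n-1}{n-i}\co_l^n$ are bounded by constants depending only on $\ord$, and that no hidden positive power of $k$ enters through the combinatorial prefactors.
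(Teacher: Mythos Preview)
Your proposal is correct and follows exactly the paper's approach: the paper's proof literally reads ``Combine \eqref{eq:coeffs-accuracy-matrix-estimate}, \eqref{eq:error-expansion-at-infinity}, \eqref{eq:action-v0-1-1}, \eqref{eq:action-v0-1-2}, \eqref{eq:action-v0-2} and \eqref{eq:action-s},'' and your write-up is a careful unpacking of precisely that combination, including the block-by-block accounting and the identification of where $d_1\ge 2d+1$ enters via the $\delta_k$ term.
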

\begin{proof}
Combine \eqref{eq:coeffs-accuracy-matrix-estimate}, \eqref{eq:error-expansion-at-infinity},
\eqref{eq:action-v0-1-1}, \eqref{eq:action-v0-1-2}, \eqref{eq:action-v0-2}
and \eqref{eq:action-s}.
\end{proof}

\section{Localizing the discontinuities\label{sec:localization}}

\global\long\def\jmin{J_{1}}
\global\long\def\jmax{J_{2}}
\global\long\def\jdist{J_{3}}
\global\long\def\fsbbn{\widehat{\fsbb}}

As we have seen, both the location and the magnitudes of the jump
can be reconstructed with high accuracy. The remaining ingredient
in our method is to divide the initial function into regions containing
a single jump, and subsequently apply the reconstruction algorithm
in each region.

Our approach is to multiply the initial function $\fun$ by a {}``bump''
$g_{j}$ which vanishes outside some neighborhood of the $j$-th jump.
This step requires a-priori estimates of the jump positions, which
can fortunately be obtained by a variety of methods, for example:
\begin{enumerate}
\item The concentration method of Gelb\&Tadmor \cite{gelb1999detection};
\item The method of partial sums due to Banerjee\&Geer \cite{banerjee1998exponentially};
\item Eckhoff's method with order zero (the Prony method).
\end{enumerate}
All the above methods provide accurate estimates of $\left\{ \jp_{j}\right\} $
up to first order. For definiteness, we present the description of
the last method and a rigorous proof of its convergence in \prettyref{app:initial-estimates-prony}.

Now, the multiplication is implemented as Fourier domain convolution.
Because of the Fourier uncertainty principle, the Fourier series of
our bump will have inifinite support and therefore every practically
computable convolution will always be an approximation to the exact
one. Nevertheless, an error of order at most $k^{-\ord_{1}-2}$ in
the Fourier coefficients will be ``absorbed'' in the constant $\fsbb$
\eqref{eq:add-smooth-fc-bound} and therefore we will still have accurate
estimates for the reconstruction of each separate jump. This will
require us to use bump functions which are $C^{\infty}$. An explicit
construction of such a function is provided in \prettyref{app:mollifier-explicit-construction}.

We assume that the following quantities are known a-priori:
\begin{itemize}
\item the lower and upper bounds for the jump magnitudes of order zero:
$\jmin\leq\left|\jc_{0,j}\right|\leq\jmax$;
\item the minimal distance between any two jumps $\left|\jp_{i}-\jp_{j}\right|\geq\jdist>0$;
\item a constant T for which\begin{equation}
\left|2\pi\left(\imath k\right)\fc\left(f\right)-\sum_{j=1}^{\np}\jc_{0,j}\omega_{j}^{k}\right|\leq T\cdot k^{-1}\label{eq:order0-fc-bound}\end{equation}

\end{itemize}
Our localization algorithm can be summarized as follows.

\global\long\def\nnn#1{\widehat{#1}}

\begin{algorithm}
\label{alg:localization-alg}Let $\fun$ be a piecewise-smooth function
of order $d_{1}\geq2\ord+1$ with $\np$ jumps $\left\{ \jp_{j}\right\} _{j=1}^{\np}$
and jump magnitudes $\left\{ \jc_{l,j}\right\} _{l=0,\dots,\ord}^{j=1,\dots,\np}$.
Let there be given the Fourier coefficients $\left\{ \fc\left(\fun\right)\right\} _{\left|k\right|=0}^{2\startcoeff+\ord+1}$,
where $\startcoeff$ is large enough (see below). 
\begin{enumerate}
\item Using the a-priori bounds $\jmin,\jmax,\jdist,T$, obtain approximate
locations of the jumps $\left\{ \nnn{\jp_{j}}\right\} $ via \prettyref{alg:prony-initial-estimates}.
In particular, the error $\left|\nnn{\jp_{j}}-\jp_{j}\right|$ should
not exceed $\frac{\jdist}{3}$, and this will be possible if $\startcoeff$
is not smaller than required by \prettyref{thm:initial-estimates-prony}.
\item For each $\nnn{\jp_{j}}$:

\begin{enumerate}
\item Construct the bump $g_{j}$ centered at $\nnn{\jp_{j}}$ with parameters
$t=2\cdot\frac{\jdist}{3}$ and $E=\jdist$, according to \prettyref{app:mollifier-explicit-construction}.
Calculate its Fourier coefficients in the range $k=-3\startcoeff\dots3\startcoeff$
according to \eqref{eq:bump-fc-formula}. 
\item Now let $h_{j}=f\cdot g_{j}$. For each $k=0,1,\dots,\startcoeff+\ord+1$
calculate\begin{equation}
\widetilde{\fc}^{(\startcoeff)}(h_{j})=\sum_{i=-2\startcoeff}^{2\startcoeff}\fc[i](f)\fc[k-i](g_{j})\label{eq:convolution-coeffs-approximate}\end{equation}

\item Use the above approximate Fourier coefficients $\nn{\fc}^{(\startcoeff)}\left(h_{j}\right)$
as the input to \prettyref{alg:rec-single-jump} for reconstructing
all the parameters of a single jump.
\end{enumerate}
\end{enumerate}
\end{algorithm}
\begin{thm}
\label{thm:localization-preserves-accuracy} \prettyref{alg:localization-alg}
will produce estimates of all these parameters with the accuracy as
stated in Theorems \ref{thm:jump-final-accuracy} and \ref{thm:magnitude-final-accuracy},
$\fsbb$ being replaced with some other constant $\fsbbn=\fsbbn\left(\fsbb,T,\jmax,\jdist\right)$.\end{thm}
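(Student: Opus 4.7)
The plan is to reduce the accuracy analysis of \prettyref{alg:localization-alg} to the single-jump setting already treated in \prettyref{thm:jump-final-accuracy} and \prettyref{thm:magnitude-final-accuracy}. For each index $j$, I would set $h_j = \fun \cdot g_j$ and observe that, because the preliminary Prony estimate gives $|\nnn{\jp_j}-\jp_j|\leq \jdist/3$ while $g_j$ vanishes outside a neighborhood of $\nnn{\jp_j}$ of radius at most $\jdist$, the support of $g_j$ meets exactly one of the original discontinuities, namely $\jp_j$. Hence $h_j$ is piecewise $C^{\ord_1}$ with a single jump, and the Leibniz formula yields
\[
\jc_{l,j}^{(h)} \;=\; \sum_{m=0}^{l}\binom{l}{m}\,\jc_{l-m,j}\,g_j^{(m)}(\jp_j), \qquad l=0,1,\dots,\ord_1.
\]
In particular $\jc_{0,j}^{(h)} = g_j(\jp_j)\,\jc_{0,j}$, and since by construction $g_j\equiv 1$ on a sub-interval around $\nnn{\jp_j}$ that contains $\jp_j$, this is bounded above and below by quantities depending only on $\magbounds$ and $\jdist$.

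Next I would verify that the smooth component of $h_j$ still satisfies a bound of the form \eqref{eq:add-smooth-fc-bound}. Writing $\fun = \sing + \smooth^{*}$ with $\smooth^{*}\in C^{\ord_1}$, the decomposition $h_j = g_j\sing + g_j\smooth^{*}$ splits $h_j$ into its own piecewise-polynomial singular part (attached to the single jump $\jp_j$ with magnitudes $\jc_{l,j}^{(h)}$) plus a $C^{\ord_1}$ residual, namely $g_j\smooth^{*}$ together with the portion of $g_j\sing$ that is smooth across $\jp_j$. Since $g_j\in C^{\infty}$ with all its seminorms controlled purely by $\jdist$ (via the explicit construction in \prettyref{app:mollifier-explicit-construction}), the Fourier coefficients of this residual decay as $\fsbbn\cdot k^{-\ord_1-2}$, with $\fsbbn$ depending only on $\fcsmoothbound,\jmax,\jdist$.

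Finally I would estimate the truncation error in \eqref{eq:convolution-coeffs-approximate}. For $|k|\leq \startcoeff+\ord+1$ and $|i|>2\startcoeff$ one has $|k-i|\geq \startcoeff$, so
\[
\bigl|\fc(h_j) - \widetilde{\fc}^{(\startcoeff)}(h_j)\bigr| \;\leq\; \sum_{|i|>2\startcoeff} |\fc[i](\fun)|\,|\fc[k-i](g_j)|.
\]
The a priori bound \eqref{eq:order0-fc-bound} gives $|\fc[i](\fun)| \leq C(\np,\jmax,T)\,|i|^{-1}$, while the super-polynomial decay of the Fourier coefficients of $g_j\in C^{\infty}$ supplies any desired rate $|\ell|^{-N}$ with constants depending only on $\jdist$; taking $N=\ord_1+3$ makes the tail $O(\startcoeff^{-\ord_1-2})$, which is absorbed into $\fsbbn$. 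Applying \prettyref{alg:rec-single-jump} to the sequence $\widetilde{\fc}^{(\startcoeff)}(h_j)$ is then formally identical to applying it to the exact coefficients of a single-jump function satisfying the hypotheses of \prettyref{thm:jump-final-accuracy} and \prettyref{thm:magnitude-final-accuracy} with $\fsbb$ replaced by $\fsbbn$, which proves the claim.

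The main obstacle will be the uniformity of $\fsbbn$: one must show that the constant produced in the second and third steps depends only on the a priori data $(\fcsmoothbound,T,\jmax,\jdist)$ and not on the truncation parameter $\startcoeff$. This forces a quantitative control of every $C^{\infty}$ seminorm of $g_j$ by a function of $\jdist$ alone, which is precisely the reason for the explicit scale-controlled mollifier built in \prettyref{app:mollifier-explicit-construction}; a merely qualitative $C^{\infty}$ bump would allow $\fsbbn$ to grow with $\startcoeff$ and destroy the asymptotic statement.
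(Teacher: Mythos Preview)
Your approach is correct and matches the paper's: exhibit $h_j$ as a single-jump function, bound the convolution-truncation tail by $O(\startcoeff^{-\ord_1-2})$ using the decay of $\fc(g_j)$, and absorb everything into a new constant $\fsbbn$. One point to tighten: the Leibniz expansion is unnecessary overhead, because $g_j\equiv 1$ on a full \emph{neighborhood} of $\jp_j$ forces $g_j^{(m)}(\jp_j)=0$ for every $m\geq 1$, so in fact $\jc_{l,j}^{(h)}=\jc_{l,j}$ exactly for all $l$; you should state this rather than merely that $\jc_{0,j}^{(h)}$ is ``bounded above and below'', since the single-jump algorithm recovers the jump data of $h_j$ and the theorem asserts these coincide with the original $\jc_{l,j}$ of $\fun$.
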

\begin{proof}
It is clear that the exact function $h_{j}=f\cdot g_{j}$ has exactly
one jump at $\jp$ and jump magnitudes $\jc_{0,j},\dots,\jc_{\ord,j}$.
In order to prove that Theorems \ref{thm:jump-final-accuracy} and
\ref{thm:magnitude-final-accuracy} can be applied, it is sufficient
to show that the error $\left|\nn{\fc}^{\left(\startcoeff\right)}\left(h_{j}\right)-\fc\left(h_{j}\right)\right|$
is of the order $k^{-\left(\ord_{1}+2\right)}$. By the Fourier convolution
theorem the exact Fourier coefficients of $h_{j}$ are equal to:\[
\fc(h_{j})=\sum_{i=-\infty}^{\infty}\fc[i](\fun)\fc[k-i](g_{j})\]
while our algorithm approximates these by the truncated convolution
\eqref{eq:convolution-coeffs-approximate}. Let us estimate the convolution
tail\[
\Delta\fc^{(\startcoeff)}(h_{j})=\sum_{i=-\infty}^{-2\startcoeff}\fc[i](f)\fc[k-i](g_{j})+\sum_{i=2\startcoeff}^{\infty}\fc[i](f)\fc[k-i](g_{j})\]
On one hand, the Fourier coefficients of $\fun$ can be bounded using
\eqref{eq:order0-fc-bound}: \[
\left|\fc\left(\fun\right)\right|\leq\Cl{jcbound-all}\left(\jmax+T\right)k^{-1}\]
On the other hand, taking $\alpha=\ord_{1}+1$ we have by \prettyref{thm:bump-fourier-decay}\[
\left|\fc\left(g_{j}\right)\right|\leq\frac{\Cl{each-bump-fc-bound}}{\jdist^{\ord_{1}+1}}\cdot\frac{1}{k^{\ord_{1}+2}}\]
Finally\[
\left|\Delta\fc^{(\startcoeff)}(h_{j})\right|\leq\frac{\Cl{deltack-loc}\left(\jmax+T\right)}{\jdist^{\ord_{1}+1}}\sum_{i=2\startcoeff}^{\infty}\frac{1}{i^{\ord_{1}+3}}\leq\frac{\Cr{deltack-loc}\left(\jmax+T\right)}{\jdist^{\ord_{1}+1}}\zeta(\ord_{1}+3,2\startcoeff)\leq\frac{\C\cdot\left(\jmax+T\right)}{\jdist^{\ord_{1}+1}}\startcoeff^{-\ord_{1}-2}\]
where $\zeta(s,q)$ is the Hurwitz zeta function. Therefore \prettyref{alg:rec-single-jump}
will produce estimates of $\left\{ \nn{\jp}_{j}\right\} $ and $\left\{ \nn{\jc}_{l,j}\right\} $
with accuracy as guaranteed by Theorems \ref{thm:jump-final-accuracy}
and \ref{thm:magnitude-final-accuracy} where\[
\fsbbn=\fsbb+\frac{\left(\jmax+T\right)}{\jdist^{\ord_{1}+1}}\qedhere\]

\end{proof}

\section{\label{sec:final-accuracy}Final accuracy}

\global\long\def\nsmooth{\ensuremath{\smooth}}
\global\long\def\singerr{\Theta}
\newcommandx\jfr[1][usedefault, addprefix=\global, 1=r]{\mathbf{D_{#1}}}

In this section we are going to calculate the overall accuracy of
approximation. Let us therefore suppose that $d_{1}\geq2d+1$, and
so using the Fourier coefficients ${\fc[-2\startcoeff](\fun),\dotsc,\fc[2\startcoeff](\fun)}$
we have reconstructed the singular part $\sing(x)$ with accuracy
specified by \prettyref{thm:localization-preserves-accuracy}.  Recall
from \eqref{eq:final-approximation-explicit} that our final approximation
is defined by\[
\begin{split}\nn{\fun} & =\sum_{\left|k\right|\leq\startcoeff}\left(\fc(\fun)-\fc(\widetilde{\sing})\right)\ee^{\imath kx}+\nn{\sing}\end{split}
\]
where $\nn{\sing}=\sum_{j=1}^{\np}\sum_{l=0}^{d}\nn{\jc}_{l,j}V_{l}(x;\nn{\jp_{j}})$.
Intuitively, the approximation error function $\nn{\fun}-\fun$ will
look as depicted in \prettyref{fig:overall-error} - very small almost
everywhere except in some shrinking neighborhoods of the jump points.
Let $y\in\left[-\pi,\pi\right]\setminus\left\{ \jp_{j}\right\} _{j=1}^{\np}$.
If we take $\startcoeff$ large enough so that the error estimate
of \prettyref{thm:jump-final-accuracy} will be less than the distance
to the nearest jump $\left|y-\jp_{j}\right|$, then $y$ will lie
in the ``flat'' region of \prettyref{fig:overall-error} and the
error $\left|\nn{\fun}\left(y\right)-\fun\left(y\right)\right|$ will
be small. This is precisely the content of our final theorem.

Let us denote the ``jump-free'' region by\[
\jfr\isdef\left[-\pi,\pi\right]\setminus\left(\bigcup_{j=1}^{\np}B_{r}\left(\jp_{j}\right)\right)\]

\begin{thm}
\label{thm:final-accuracy}Let $\fun:\left[-\pi,\pi\right]\to\reals$
have $\np$ jump discontinuities $\left\{ \jp_{j}\right\} _{j=1}^{\np}$,
and let it be $d_{1}$-times continuously differentiable between the
jumps. Let $r>0$. Then for every integer $d$ satisfying $2d+1\leq\ord_{1}$,
there exist explicit functions $F=F\left(\magbounds,\fsbbn\right),G=G\left(\magbounds,\fsbbn,r\right)$
depending on all the a-priori bounds such that for all $M>G$ \prettyref{alg:entire-reconstruction}
reconstructs the locations and the magnitudes of the jumps with accuracy
provided by \prettyref{thm:localization-preserves-accuracy}, and
with the pointwise accuracy \[
\left|\nn{\fun}(y)-\fun(y)\right|\leq F\cdot\startcoeff^{-\ord-1}\qquad y\in\jfr\]
\end{thm}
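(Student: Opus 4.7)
The plan is to start from an exact rearrangement. Since by construction $\fc(\widetilde{\smooth})=\fc(\fun)-\fc(\widetilde{\sing})$ for $|k|\leq\startcoeff$ and $\fun=\smooth+\sing$, substituting into \eqref{eq:final-approximation-explicit} yields
\[
\widetilde{\fun}(y)-\fun(y)\;=\;\bigl[S_{\startcoeff}\smooth(y)-\smooth(y)\bigr]+\bigl[(\widetilde{\sing}-\sing)(y)-S_{\startcoeff}(\widetilde{\sing}-\sing)(y)\bigr],
\]
where $S_{\startcoeff}g(x):=\sum_{|k|\leq\startcoeff}\fc(g)\ee^{\imath kx}$. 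The first bracket is the classical Fourier truncation error for the smooth component; using $|\fc(\smooth)|\leq\fsbbn\,k^{-\ord-2}$ (the localized variant of \eqref{eq:fcsmoothbound} provided by \prettyref{thm:localization-preserves-accuracy}) it is immediately bounded by $C\fsbbn\,\startcoeff^{-\ord-1}$, which already matches the target rate.

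For the singular-difference bracket I plan to use Eckhoff's representation \eqref{eq:sing-part-explicit-bernoulli} to split $\widetilde{\sing}-\sing=h^{(A)}+h^{(\jp)}$ with
\[
h^{(A)}=\sum_{j,l}(\widetilde{\jc}_{l,j}-\jc_{l,j})V_l(\cdot;\jp_j),\qquad h^{(\jp)}=\sum_{j,l}\widetilde{\jc}_{l,j}\bigl[V_l(\cdot;\widetilde{\jp}_j)-V_l(\cdot;\jp_j)\bigr],
\]
and estimate the Fourier truncation error of each building block at $y\in\jfr$. The tail of $V_l(y;\jp_j)$ obeys $|V_l(y;\jp_j)-S_{\startcoeff}V_l(y;\jp_j)|\leq C/\startcoeff^{l}$ for $l\geq1$ (from $|\fc(V_l)|\leq Ck^{-l-1}$) and $\leq C/(\startcoeff\,r)$ for $l=0$ (from the standard estimate $\sum_{k>N}\sin(k\theta)/k=O(1/(N|\theta|))$). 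Combined with $|\widetilde{\jc}_{l,j}-\jc_{l,j}|\leq C\sab^{2}\,\startcoeff^{l-\ord-1}$ from \prettyref{thm:magnitude-final-accuracy}, each $(j,l)$-term in $h^{(A)}$ therefore contributes at most $O(\startcoeff^{-\ord-1})$, with a harmless $r^{-1}$ in the $l=0$ summand.

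The main obstacle is $h^{(\jp)}$. For $l\geq1$ the identity $\partial_{\jp}V_l=-V_{l-1}$ lets me express $V_l(y;\widetilde{\jp}_j)-V_l(y;\jp_j)$ as $-\int_{\jp_j}^{\widetilde{\jp}_j}V_{l-1}(y;\jp)\,d\jp$ and so reduce to the previous tail estimates multiplied by the extra factor $|\widetilde{\jp}_j-\jp_j|\leq C\sab\,\startcoeff^{-\ord-2}$ given by \prettyref{thm:jump-final-accuracy} --- one order better than required. The genuinely new case is $l=0$, where $\partial_{\jp}V_0$ is distributional and the Taylor-in-$\jp$ argument breaks down. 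My remedy is a direct computation: $V_0(\cdot;\widetilde{\jp}_j)-V_0(\cdot;\jp_j)$ equals, modulo a global constant, the indicator of the short interval between $\jp_j$ and $\widetilde{\jp}_j$ of length $O(\sab\,\startcoeff^{-\ord-2})$, and its Fourier partial sum at $y$ admits the representation $\int_{\jp_j}^{\widetilde{\jp}_j}D_{\startcoeff}(y-t)\,dt$, which is bounded by $C|\widetilde{\jp}_j-\jp_j|/r=O(\startcoeff^{-\ord-2}/r)$ via the Dirichlet-kernel bound $|D_{\startcoeff}(t)|\leq C/|t|$. Summing all contributions yields $|\widetilde{\fun}(y)-\fun(y)|\leq F\cdot\startcoeff^{-\ord-1}$; the threshold $G$ is chosen so that (i) the hypotheses of Theorems \ref{thm:jump-final-accuracy}, \ref{thm:magnitude-final-accuracy} and \ref{thm:localization-preserves-accuracy} are in force, and (ii) $\sab\,G^{-\ord-2}<r/2$, ensuring that $y\in\jfr$ also stays at distance $\geq r/2$ from each $\widetilde{\jp}_j$.
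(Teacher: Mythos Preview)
Your argument is correct and uses the same architecture as the paper: write $\widetilde{\fun}-\fun$ as the Fourier truncation error of $\smooth$ plus the Fourier truncation error of $\Theta:=\widetilde{\sing}-\sing$, then split $\Theta$ into a magnitude-perturbation part $Z=h^{(A)}$ and a location-perturbation part $W=h^{(\jp)}$ exactly as you do. The one substantive difference is in the handling of $W$. The paper does not use the identity $\partial_{\jp}V_{l}=-V_{l-1}$ or the Dirichlet-kernel estimate; instead it observes that $W$ is $O(1)$ on the ``no man's land'' of length $|\widetilde{\jp}_{j}-\jp_{j}|=O(\startcoeff^{-\ord-2})$ and $O(\startcoeff^{-\ord-2})$ elsewhere on $\jfr$, which immediately gives $|W(y)|=O(\startcoeff^{-\ord-2})$ and $|\fc(W)|=O(\startcoeff^{-\ord-2})$, hence $|S_{\startcoeff}W(y)|=O(\startcoeff^{-\ord-1})$. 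This is shorter and avoids the explicit $r^{-1}$ factors in your bound (so that $F$ is $r$-independent as stated), at the cost of being less structural. Your route has the minor advantage of making the $r$-dependence transparent and of treating all $l$ uniformly via $\partial_{\jp}V_{l}=-V_{l-1}$, but you then need to absorb the stray $r^{-1}$ into the threshold $G$ to match the theorem's claim that $F=F(\magbounds,\fsbbn)$.
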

\begin{proof}
Define\[
\begin{split}\fun_{\startcoeff} & \isdef\sum_{\left|k\right|\leq\startcoeff}\left(\fc(\fun)-\fc(\sing)\right)\ee^{\imath kx}+\sing\end{split}
\]
We write the overall approximation error as \begin{equation}
\begin{split}\left|\nn{\fun}(y)-\fun(y)\right| & \leq\left|\nn{\fun}(y)-\fun_{\startcoeff}(y)\right|+\left|\fun_{\startcoeff}(y)-\fun(y)\right|\end{split}
\label{eq:final-error-breakup}\end{equation}
Let us examine the two terms on the right-hand side separately.

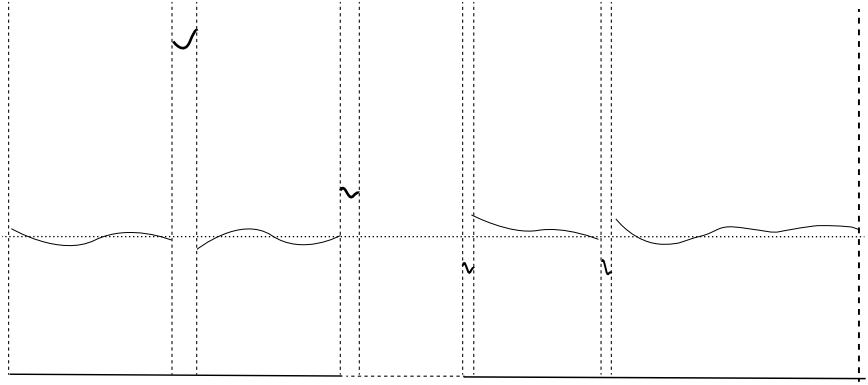
\begin{figure}
\psset{xunit=.5pt,yunit=.5pt,runit=.5pt}
\begin{pspicture}(663.50164795,324.19845581)
{
\newrgbcolor{curcolor}{0 0 0}
\pscustom[linewidth=0.80000001,linecolor=curcolor,linestyle=dashed,dash=2.4 2.4]
{
\newpath
\moveto(268.253057,323.46520781)
\lineto(268.253057,41.23989581)
}
}
{
\newrgbcolor{curcolor}{0 0 0}
\pscustom[linewidth=0.80000001,linecolor=curcolor,linestyle=dashed,dash=2.4 2.4]
{
\newpath
\moveto(159.636407,41.23989581)
\lineto(159.636407,323.46518781)
}
}
{
\newrgbcolor{curcolor}{0 0 0}
\pscustom[linewidth=0.80000001,linecolor=curcolor,linestyle=dashed,dash=2.4 2.4]
{
\newpath
\moveto(17.360282,41.23989581)
\lineto(17.360282,323.46520781)
}
}
{
\newrgbcolor{curcolor}{0 0 0}
\pscustom[linewidth=1.08071089,linecolor=curcolor]
{
\newpath
\moveto(18.073601,41.61772581)
\lineto(269.809807,40.25051581)
}
}
{
\newrgbcolor{curcolor}{0 0 0}
\pscustom[linewidth=1.46649683,linecolor=curcolor,linestyle=dashed,dash=4.39949053 4.39949053]
{
\newpath
\moveto(660.627627,318.09554581)
\lineto(660.627627,35.87023581)
}
}
{
\newrgbcolor{curcolor}{0 0 0}
\pscustom[linewidth=0.80000001,linecolor=curcolor,linestyle=dashed,dash=2.4 2.4]
{
\newpath
\moveto(473.305157,41.23989581)
\lineto(473.305157,323.46520781)
}
}
{
\newrgbcolor{curcolor}{0 0 0}
\pscustom[linewidth=0.80000001,linecolor=curcolor,linestyle=dashed,dash=2.4 2.4]
{
\newpath
\moveto(360.837117,41.23989581)
\lineto(360.837117,323.46520781)
}
}
{
\newrgbcolor{curcolor}{0 0 0}
\pscustom[linewidth=1.15019548,linecolor=curcolor]
{
\newpath
\moveto(361.578677,39.56702581)
\lineto(665.733997,38.28525581)
}
}
{
\newrgbcolor{curcolor}{0 0 0}
\pscustom[linewidth=0.82122475,linecolor=curcolor,linestyle=dashed,dash=2.46367434 2.46367434]
{
\newpath
\moveto(362.325197,40.12077581)
\lineto(272.820817,40.12077581)
}
}
{
\newrgbcolor{curcolor}{0 0 0}
\pscustom[linewidth=2,linecolor=curcolor]
{
\newpath
\moveto(141.881767,293.54776581)
\curveto(141.881767,293.54776581)(149.883957,282.14883581)(154.516817,293.54776581)
\curveto(159.149657,304.94668581)(159.886707,301.68985581)(159.886707,301.68985581)
}
}
{
\newrgbcolor{curcolor}{0 0 0}
\pscustom[linewidth=2,linecolor=curcolor]
{
\newpath
\moveto(268.340477,180.63114581)
\curveto(268.340477,180.63114581)(269.316207,183.84512581)(271.365227,181.91672581)
\curveto(273.414267,179.98834581)(275.170577,173.17470581)(278.195327,176.64580581)
\curveto(281.220087,180.11690581)(282.390957,178.95987581)(282.390957,178.95987581)
}
}
{
\newrgbcolor{curcolor}{0 0 0}
\pscustom[linewidth=0.80000001,linecolor=curcolor,linestyle=dashed,dash=2.4 2.4]
{
\newpath
\moveto(140.947527,41.23989581)
\lineto(140.947527,323.46520781)
}
}
{
\newrgbcolor{curcolor}{0 0 0}
\pscustom[linewidth=0.80000001,linecolor=curcolor,linestyle=dashed,dash=2.4 2.4]
{
\newpath
\moveto(282.639507,41.23989581)
\lineto(282.639507,323.46518781)
}
}
{
\newrgbcolor{curcolor}{0 0 0}
\pscustom[linewidth=0.80000001,linecolor=curcolor,linestyle=dashed,dash=2.4 2.4]
{
\newpath
\moveto(369.242827,41.23989581)
\lineto(369.242827,323.46520781)
}
}
{
\newrgbcolor{curcolor}{0 0 0}
\pscustom[linewidth=0.80000001,linecolor=curcolor,linestyle=dashed,dash=2.4 2.4]
{
\newpath
\moveto(465.523887,41.23989581)
\lineto(465.523887,323.46520781)
}
}
{
\newrgbcolor{curcolor}{0 0 0}
\pscustom[linewidth=1.59905803,linecolor=curcolor]
{
\newpath
\moveto(360.902937,123.74071581)
\curveto(360.902937,123.74071581)(361.492217,127.14262581)(362.729687,125.10146581)
\curveto(363.967177,123.06033581)(365.027867,115.84829581)(366.854617,119.52235581)
\curveto(368.681377,123.19641581)(369.388497,121.97172581)(369.388497,121.97172581)
}
}
{
\newrgbcolor{curcolor}{0 0 0}
\pscustom[linewidth=1.454656,linecolor=curcolor]
{
\newpath
\moveto(466.278547,126.68205581)
\curveto(466.278547,126.68205581)(466.768177,129.59789581)(467.796417,126.57322581)
\curveto(468.824657,123.54856581)(469.705997,115.51563581)(471.223867,117.71073581)
\curveto(472.741747,119.90585581)(473.329307,118.11939581)(473.329307,118.11939581)
}
}
{
\newrgbcolor{curcolor}{0 0 0}
\pscustom[linewidth=0.60000002,linecolor=curcolor]
{
\newpath
\moveto(19.436016,151.81925581)
\curveto(19.436016,151.81925581)(56.308789,130.13069581)(82.646477,143.14383581)
\curveto(108.984177,156.15697581)(140.589397,143.14383581)(140.589397,143.14383581)
}
}
{
\newrgbcolor{curcolor}{0 0 0}
\pscustom[linewidth=1.00463331,linecolor=curcolor,linestyle=dashed,dash=1.00463331 2.00926661]
{
\newpath
\moveto(12.295301,145.81947581)
\lineto(664.774767,145.69570581)
}
}
{
\newrgbcolor{curcolor}{0 0 0}
\pscustom[linewidth=0.60000002,linecolor=curcolor]
{
\newpath
\moveto(159.820427,136.13884581)
\curveto(159.820427,136.13884581)(192.841357,162.31246581)(216.427727,146.60828581)
\curveto(240.014117,130.90410581)(268.317757,146.60828581)(268.317757,146.60828581)
}
}
{
\newrgbcolor{curcolor}{0 0 0}
\pscustom[linewidth=0.60000002,linecolor=curcolor]
{
\newpath
\moveto(367.457737,162.26926581)
\curveto(367.457737,162.26926581)(396.703657,146.95122581)(417.593597,150.47132581)
\curveto(438.483547,153.99141581)(463.551457,143.69110581)(463.551457,143.69110581)
}
}
{
\newrgbcolor{curcolor}{0 0 0}
\pscustom[linewidth=0.61028296,linecolor=curcolor]
{
\newpath
\moveto(660.767967,151.05379581)
\curveto(660.767967,151.05379581)(659.111427,151.71157581)(656.153087,152.74475581)
\curveto(653.199597,153.77624581)(632.268197,154.58806581)(626.908167,153.87331581)
\curveto(595.443967,149.67757581)(601.422777,148.06467581)(590.536357,149.78463581)
\curveto(584.605877,150.72159581)(571.379067,152.84117581)(565.087367,153.18136581)
\curveto(553.892747,153.78663581)(552.259777,148.99145581)(541.629767,146.34771581)
\curveto(524.691067,142.13497581)(527.728877,140.08627581)(513.071677,139.98263581)
\curveto(491.241297,139.82827581)(476.713337,159.27650581)(476.713337,159.27650581)
}
}
\end{pspicture}

\caption{The approximation error}
\label{fig:overall-error}
\end{figure}

According to our previous notation, $\nsmooth=\fun-\sing$ is a $d$-times
continuously differentiable everywhere function. The term $\left|\fun_{\startcoeff}-\fun\right|$
is easily seen to be the usual Fourier truncation error of $\nsmooth$,
since\[
\begin{split}\left|\fun_{\startcoeff}(y)-\fun(y)\right| & =\left|\sum_{\left|k\right|\leq\startcoeff}\left(\fc(\fun)-\fc(\sing)\right)\ee^{\imath ky}+\sing(y)-f(y)\right|=\left|\sum_{\left|k\right|\leq\startcoeff}\fc(\nsmooth)\ee^{\imath ky}-\nsmooth(y)\right|=\left|\sum_{\left|k\right|>\startcoeff}\fc(\nsmooth)\ee^{\imath ky}\right|\end{split}
\]
Now recall that the Fourier coefficients of $\nsmooth\in C^{d}$ are
bounded by \eqref{eq:fcsmoothbound}, therefore\begin{equation}
\left|\fun_{\startcoeff}(y)-\fun(y)\right|\leq\C\cdot\fcsmoothbound\cdot\startcoeff^{-d-1}\label{eq:truncation-error-smooth}\end{equation}

Let $\singerr\isdef\nn{\sing}-\sing$ denote the ``singular error
function'' (see \prettyref{fig:overall-error}). Then the second
term can be written as:\[
\begin{split}\left|\nn{\fun}(y)-\fun_{\startcoeff}(y)\right| & =\left|\sum_{\left|k\right|\leq\startcoeff}\left(\fc(\sing)-\fc(\widetilde{\sing})\right)\ee^{\imath ky}+\left(\nn{\sing}(y)-\sing(y)\right)\right|\end{split}
=\left|\sum_{\left|k\right|\leq\startcoeff}\fc(\singerr)\ee^{\imath ky}-\singerr\left(y\right)\right|\]
Write \[
\begin{split}\nn{\jp}_{j}=\jp_{j}+\alpha\left(\startcoeff\right)\qquad & \left|\alpha\left(M\right)\right|\leq F_{\alpha}\left(\jcbound,\jccbound,\fsbbn\right)\cdot\startcoeff^{-\ord-2}\\
\nn{\jc}_{l,j}=\jc_{l,j}+\beta_{l}\left(\startcoeff\right)\qquad & \left|\beta_{l}\left(\startcoeff\right)\right|\leq F_{\beta}\left(\jcbound,\jccbound,\fsbbn\right)\cdot\startcoeff^{l-\ord-1}\end{split}
\]
where $F_{\alpha}$ and $F_{\beta}$ are provided by \prettyref{thm:localization-preserves-accuracy}.
For every $\epsilon<r$, we define\[
U_{l,\epsilon}\left(y\right)\isdef V_{l}\left(y;\jp_{j}+\epsilon\right)-V_{l}\left(y;\jp_{j}\right)\]
 Using the formula \eqref{eq:sing-part-explicit-bernoulli} we therefore
have \[
\begin{split}\singerr\left(y\right) & =\sum_{j=1}^{\np}\sum_{l=0}^{\ord}\left\{ \nn{\jc}_{l,j}V_{l}\left(y;\nn{\jp}_{j}\right)-\jc_{l,j}V_{l}\left(y;\jp_{j}\right)\right\} =\sum_{j=1}^{\np}\sum_{l=0}^{\ord}\left\{ \left(\jc_{l,j}+\beta_{l}\left(\startcoeff\right)\right)\left(V_{l}\left(y;\jp_{j}\right)+U_{l,\alpha\left(\startcoeff\right)}\left(y\right)\right)-\jc_{l,j}V_{l}\left(y;\jp_{j}\right)\right\} \\
 & =\underbrace{\sum_{j=1}^{\np}\sum_{l=0}^{\ord}\beta_{l}\left(\startcoeff\right)V_{l}\left(y;\jp_{j}\right)}_{\isdef Z\left(y\right)}+\underbrace{\sum_{j=1}^{\np}\sum_{l=0}^{\ord}\nn{\jc}_{l,j}U_{l,\alpha\left(\startcoeff\right)}\left(y\right)}_{\isdef W\left(y\right)}\end{split}
\]

and so\begin{equation}
\left|\nn{\fun}(y)-\fun_{\startcoeff}(y)\right|\leq\left|\sum_{\left|k\right|>\startcoeff}\fc\left(Z\right)\ee^{\imath ky}\right|+\left|\sum_{\left|k\right|<\startcoeff}\fc\left(W\right)\ee^{\imath ky}-W\left(y\right)\right|\label{eq:final-error-z-w-sum}\end{equation}
The functions $V_{l}$ belong to $C^{l}$, and therefore by the well-known
estimate (see also \eqref{eq:best-approximation-smooth}), there exist
constants $S_{l}$ such that\[
\left|\sum_{\left|k\right|>\startcoeff}\fc\left(V_{l}\right)\ee^{\imath ky}\right|\leq S_{l}\cdot\startcoeff^{-l}\]

and therefore\begin{equation}
\left|\sum_{\left|k\right|>\startcoeff}\fc\left(Z\right)\ee^{\imath ky}\right|\leq\C\cdot F_{\beta}\cdot\startcoeff^{-\ord-1}\label{eq:z-truncation-error}\end{equation}

Let us now investigate $W\left(y\right)$. Let $\jcallboundu$ denote
an upper bound for the magnitudes of the jumps:\[
\left|\jc_{l,j}\right|<\jcallboundu\qquad j=1,\dots,\np;\quad l=0,1,\dots,\ord_{1}\]
 Clearly the functions $U_{l,\epsilon}\left(y\right)$ satisfy:
\begin{enumerate}
\item Since $y\in\jfr$, then $\left|U_{l,\epsilon}\left(y\right)\right|\leq\Cl{bern}\epsilon$
for some absolute constant $\Cr{bern}$. This bound can be obtained
by just Taylor-expanding the functions $V_{l}\left(y;\jp_{j}+\epsilon\right)$
at $\epsilon=0$. In particular for $\epsilon=\alpha\left(\startcoeff\right)$
we have\begin{equation}
\left|W\left(y\right)\right|\leq\sum_{j=1}^{\np}\sum_{l=0}^{\ord}\left|\nn{\jc}_{l,j}\right|\left|U_{l,\alpha\left(\startcoeff\right)}\left(y\right)\right|\leq\C\cdot\jcallboundu\cdot F_{\alpha}\cdot\startcoeff^{-\ord-2}\label{eq:w-sum1}\end{equation}

\item In the ``no man's land'' of length $\alpha\left(\startcoeff\right)$
between $\jp_{j}$ and $\nn{\jp_{j}}$, $U_{l,\epsilon}$ is bounded
by $\C\cdot\jcallboundu$. Furthermore, as we have just seen, in the
flat regions $U_{l,\alpha\left(\startcoeff\right)}$ is bounded by
$\Cr{bern}F_{\alpha}\startcoeff^{-\ord-2}$. Therefore the Fourier
coefficients of $W$ are certainly bounded by\[
\left|\fc\left(W\right)\right|\leq\C\cdot\jcallboundu\cdot F_{\alpha}\cdot\startcoeff^{-\ord-2}\]
and so\begin{equation}
\left|\sum_{\left|k\right|<\startcoeff}\fc\left(W\right)\ee^{\imath ky}\right|\leq\C\cdot\jcallboundu\cdot F_{\alpha}\cdot\startcoeff^{-\ord-1}\label{eq:w-sum2}\end{equation}

\end{enumerate}
Combining \eqref{eq:final-error-breakup}, \eqref{eq:truncation-error-smooth},
\eqref{eq:final-error-z-w-sum}, \eqref{eq:z-truncation-error}, \eqref{eq:w-sum1}and
\eqref{eq:w-sum2} completes the proof. \qedhere

\end{proof}

\section{Numerical results\label{sec:numerical-results}}

In this section we present results of various numerical simulations
whose primary goal is to validate the asymptotic accuracy predictions
for large $\startcoeff$. We have used a straightforward implementation
and made no attempt to optimize it further. In particular, the Fourier
coefficients are assumed to be known with arbitrary precision (this
is important for the localization, see below).

\subsection{Recovery of a single jump}

Given $d,d_{1}$ and $\startcoeff$, a piecewise function with one
discontinuity is generated according to the formula\[
\fun(x)=\sum_{l=0}^{\ord_{1}}\jc_{l}V_{l}(x;\jp)+\sum_{k=-\startcoeff}^{\startcoeff}f_{k}\ee^{\imath kx}\]
where the numbers $\jp\in[-\pi,\pi],\left\{ \jc_{l}\in\reals\right\} _{l=0}^{d_{1}}$
and $\left\{ f_{k}\in\complexfield\right\} _{k=-\startcoeff}^{\startcoeff}$
are chosen at random, such that $f_{k}\sim k^{-d_{1}-2}$ and $f_{-k}=\bar{f_{k}}$.
The Fourier coefficients are calculated with the exact formula\[
\fc(\fun)=\frac{\ee^{-\imath k\jp}}{2\pi}\sum_{l=0}^{d_{1}}\frac{\jc_{l}}{\left(\imath k\right)^{l+1}}+f_{k}\]

These coefficients are then passed to the reconstruction routine for
a single jump, of order $d$. This routine implements \prettyref{alg:rec-single-jump}
in a standard MATLAB environment with double-precision calculations.

\begin{figure}
\subfloat[Accuracy of reconstruction with $d=3$ and $d_{1}=11$, as a function
of $\startcoeff$.]{\includegraphics[scale=0.5]{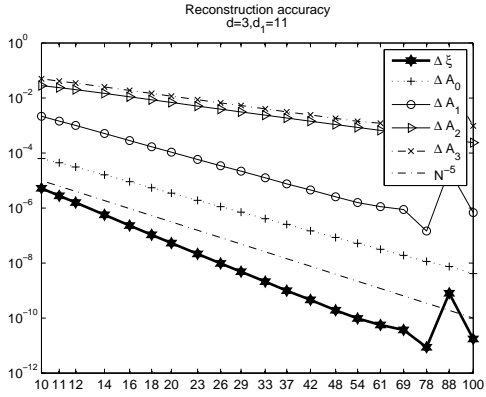}

}\subfloat[The roots of $q_{\startcoeff}^{d}(z)$]{\includegraphics[scale=0.5]{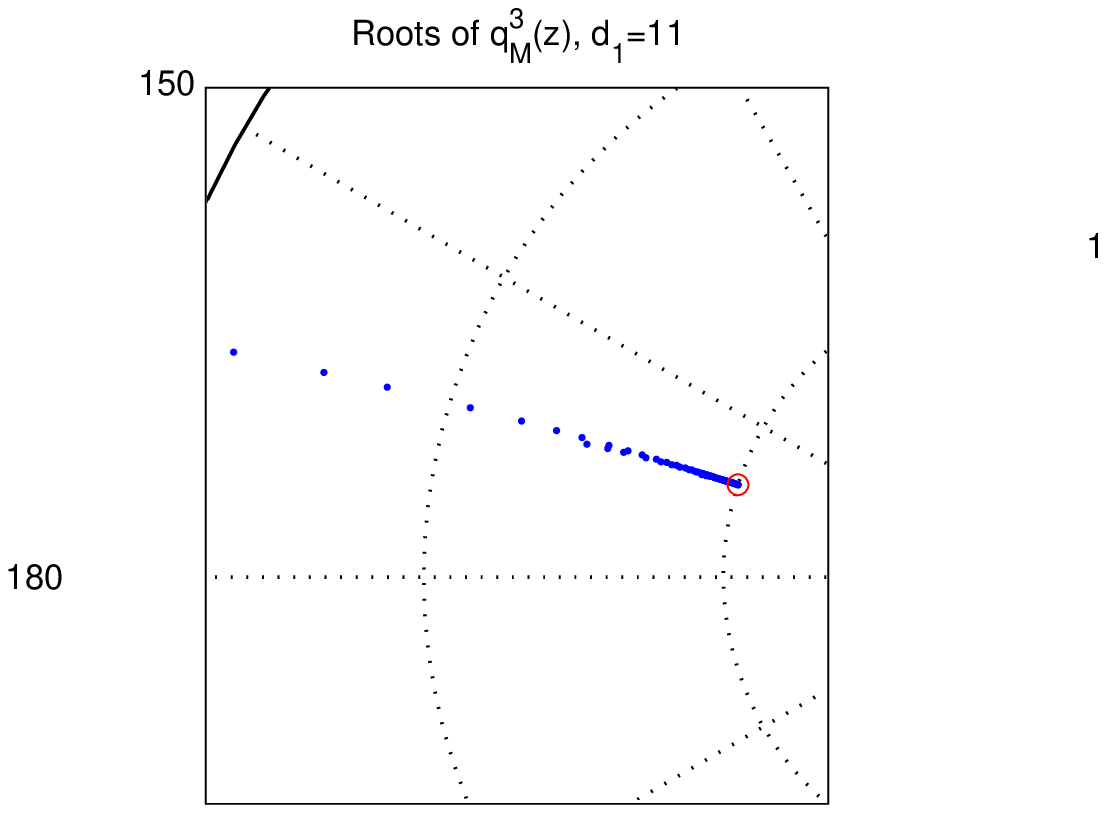}

}

\caption{Reconstruction of a single jump}
\label{fig:single-jump}
\end{figure}

\begin{figure}
\subfloat[$d_{1}=8$]{\includegraphics[scale=0.5]{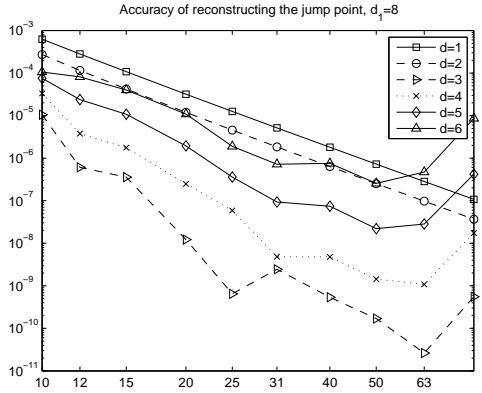}

}\subfloat[$d_{1}=12$]{\includegraphics[scale=0.5]{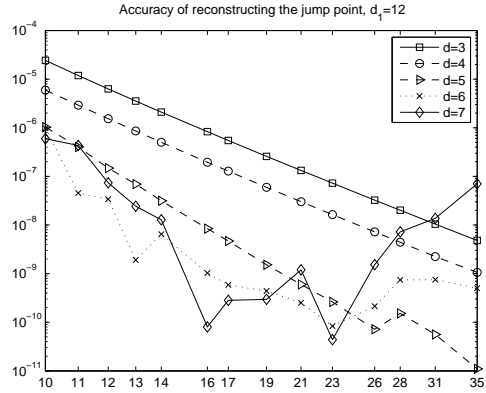}

}

\caption{Dependence of the accuracy on the order with fixed smoothness, with
increasing $\startcoeff$.}
\label{fig:single-jump-fixed-smoothness}
\end{figure}

\begin{figure}
\subfloat[$d=3$]{\includegraphics[scale=0.5]{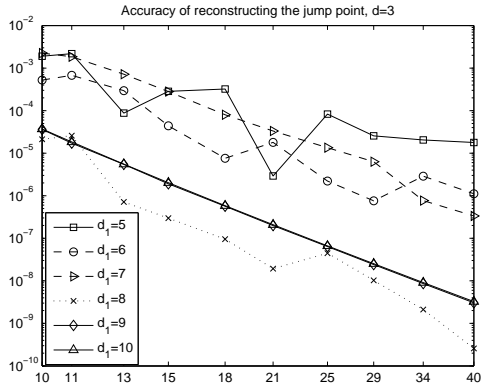}

}\subfloat[$d=4$]{\includegraphics[scale=0.5]{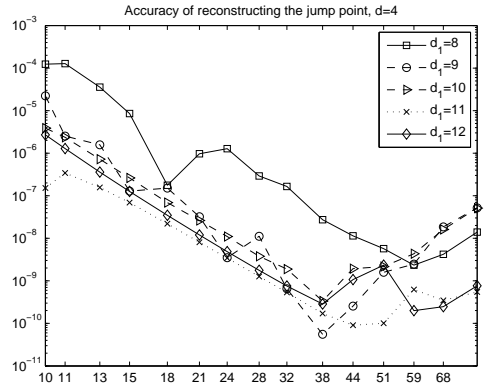}

}

\caption{Dependence of the accuracy on the smoothness with fixed order, with
increasing $\startcoeff$.}
\label{fig:single-jump-fixed-order}
\end{figure}

The following experiments were carried out:
\begin{enumerate}
\item Keeping $d$ and $d_{1}$ fixed, compare the accuracy of recovering
the jump location and all the jump magnitudes for different values
of $\startcoeff$. The results can be seen in \prettyref{fig:single-jump}.
We also plot the distribution of roots of the corresponding polynomials
$q_{\startcoeff}^{d}(z)$ - compare with \prettyref{fig:root-geometry}.
\item Keeping $d_{1}$ fixed, compare the accuracy for different reconstruction
orders $d=1,\dots,d_{1}$. The results are presented in \prettyref{fig:single-jump-fixed-smoothness}.
\item Keeping the reconstruction order $d$ fixed, compare the accuracy
for different smoothness values $d_{1}$. The results are presented
in \prettyref{fig:single-jump-fixed-order}.
\end{enumerate}
The optimality of $d=\frac{d_{1}}{2}-1$, as well as the asymptotic
order of convergence, are clearly seen to fit the theoretical predictions.
The instability and eventual breakup of the measured accuracy for
large values of $\startcoeff$ is due to the finite-precision calculations.

\subsection{Localization}

We have restricted ourselves to the following simplified setting:
the function has two jumps at $\jp_{1}=0$ and $\jp_{2}=3$, and we
localize the jump at the origin by a bump having width $\frac{8}{3}$
around the initial approximation $\widehat{\jp_{1}}=\frac{1}{40}$.
The explicit formulas for the Fourier coefficients of the bump are
derived in \prettyref{app:mollifier-explicit-construction}. We have
used Mathematica in order to carry out the computations with arbitrary
precision.

The results can be seen in \prettyref{fig:localization}. Localization
convergence can clearly be seen here, although it starts from very
large coefficients.

\begin{figure}
\subfloat[The case $\ord=1,$ $\ord_{1}=6$]{\includegraphics{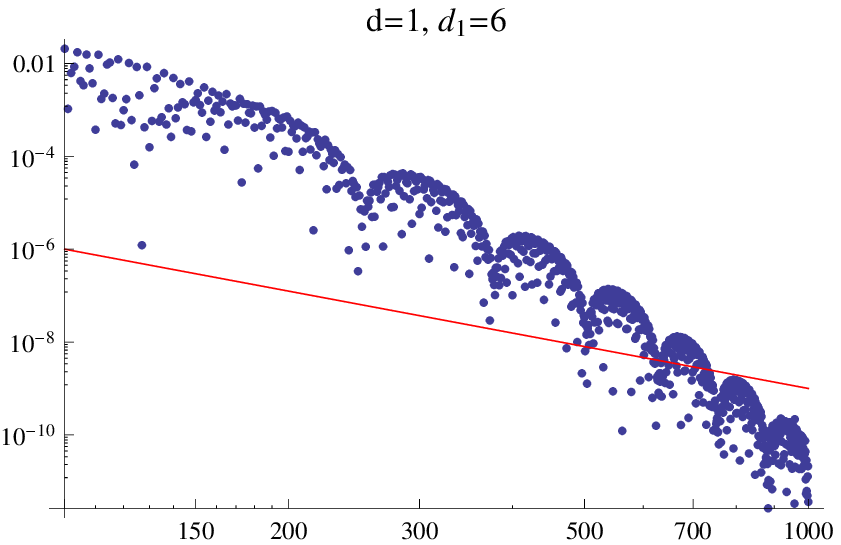}

}\subfloat[The case $\ord=2$, $\ord_{1}=6$]{\includegraphics{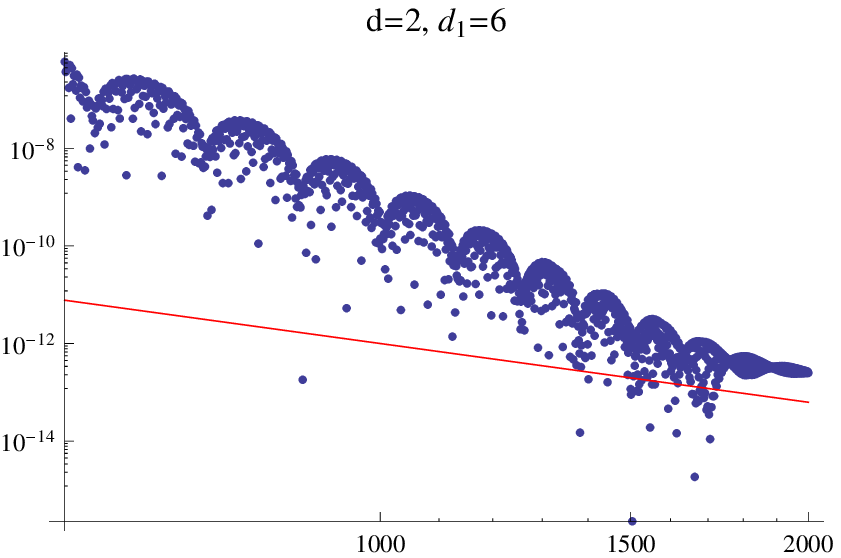}

}

\caption{Localization: accuracy of recovering the jump. The predicted accuracy
$\startcoeff^{-d-2}$ is drawn for comparison.}
\label{fig:localization}
\end{figure}

\section{Discussion}

In this paper we have demonstrated that nonlinear Fourier reconstruction
of piecewise-smooth functions can achieve accuracy with asymptotic
order of at least half the order of smoothness. As indicated by our
theoretical results as well as the numerical simulations, a reconstruction
method whose order is more than half the order of smoothness becomes
less accurate. So it appears that the algebraic approach has certain
limitations, and the interesting question is whether these limitations
are inherent or superficial. We hope that our results may provide
a clue towards obtaining sharp upper bounds.

In addition, it seems that Eckhoff's conjecture is false as stated
in \cite{eckhoff1995arf}, namely that the jumps of a piecewise-smooth
$C^{\ord}$ function can be reconstructed with accuracy $k^{-\ord-2}$.
Using a method of highest possible order doesn't take into account
the stiffness of the problem. In fact, it can be shown that the Lipschitz
constant of the solution map $\left\{ \fc(\fun)\right\} _{k=\startcoeff}^{\startcoeff+\ord+1}\to\left\{ \jp_{j},\jc_{l,j}\right\} $
of order $\ord$ is proportional to $\startcoeff^{d}$. We plan to
present these results elsewhere.

Hopefully, our analysis can be related to the algebraic reconstruction
schemes of Kvernadze and Banerjee\&Geer as well.

We would like to point out the connection of the algebraic system
\eqref{eq:singular-fourier-explicit} as well as the well-known Prony
system of equations  \eqref{eq:prony-system} (which plays a central
role in many branches of mathematics - see \cite{sig_ack} and \cite{lyubich2004sylvester})
to other recent nonlinear reconstruction methods in Signal Processing,
in particular: finite rate of innovation techniques \cite{vetterli2002ssf,dragotti2007sma},
reconstruction of shapes from moments \cite{gustafsson2000rpd,golub2000snm}
and piecewise $D$-finite moment inversion \cite{bat2008,batenkov2009arp}.
We therefore hope that our results can be extended to these subjects
as well.

\bibliographystyle{plain}
\bibliography{../../bibliography/all-bib}

\appendix

\section{\label{app:difference-calculus-and-related-results}Discrete difference
calculus and related results}

In this appendix we provide proofs of several combinatorial auxiliary
results.

Let $\shift$ denote the discrete ``shift'' operator in $k$, i.e.
for every function $g(k):\reals\to\reals$ we have\[
\shift g(k)\isdef g(k+1)\]
Furthermore, let $\Delta$ denote the discrete difference operator,
i.e. $\Delta=\shift-\id$ where $\id$ is the identity operator. Then
by the binomial theorem we have \begin{equation}
\Delta^{d}g(k)=\left(\shift-\id\right)^{d}g(k)=(-1)^{d}\sum_{j=0}^{d}(-1)^{j}\binom{d}{j}g(k+j)\label{eq:difference-operator-power}\end{equation}

\begin{lem}
\label{lem:polynomial-delta}Let $p(k)=a_{0}k^{n}+a_{1}k^{n-1}+\dots+a_{n}$
be a polynomial of degree $n$. Then\[
\begin{split}\Delta^{n}p(k) & =a_{0}n!\\
\Delta^{n+1}p(k) & =0\end{split}
\]
\end{lem}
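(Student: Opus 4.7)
The plan is to reduce the statement to a simple induction on the degree $n$, exploiting the fact that the forward difference operator $\Delta$ reduces the degree of a polynomial by exactly one while scaling the leading coefficient in a controlled way.

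First I would establish the key sublemma: if $p(k) = a_0 k^n + a_1 k^{n-1} + \dots + a_n$ with $n \geq 1$, then $\Delta p(k) = p(k+1) - p(k)$ is a polynomial in $k$ of degree exactly $n-1$ whose leading coefficient is $n \cdot a_0$. This is a direct computation: the leading term $a_0(k+1)^n - a_0 k^n$ expands via the binomial theorem as $a_0 \bigl(n k^{n-1} + \binom{n}{2} k^{n-2} + \dots\bigr)$, and the lower-order terms of $p$ contribute polynomials of degree at most $n-2$ to $\Delta p$. So the leading coefficient of $\Delta p$ is $n a_0$, as claimed.

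Next I would iterate this observation by induction on $n$. The base case $n=0$ is trivial: $p(k) = a_0$, so $\Delta p(k) = 0 = a_0 \cdot 0!$ fails --- actually $\Delta^0 p = p = a_0 = a_0 \cdot 0!$, and $\Delta^1 p = 0$. For the induction step, assume the claim holds up to degree $n-1$. Given $p$ of degree $n$ with leading coefficient $a_0$, the sublemma says $\Delta p$ has degree $n-1$ with leading coefficient $n a_0$. By the inductive hypothesis, $\Delta^{n-1}(\Delta p) = (n a_0)(n-1)! = a_0 n!$, which proves the first identity. The second identity then follows immediately: $\Delta^{n+1} p = \Delta(\Delta^n p) = \Delta(a_0 n!) = 0$ since $a_0 n!$ is a constant.

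There is no real obstacle here --- the result is classical and both parts follow from a single structural fact about how $\Delta$ acts on leading coefficients. The only thing to be slightly careful about is bookkeeping in the binomial expansion of $(k+1)^n - k^n$, but nothing genuinely subtle is required. An alternative route, if one prefers a direct proof that avoids induction, is to invoke formula \eqref{eq:difference-operator-power} together with the identity $\sum_{j=0}^{d}(-1)^{j}\binom{d}{j} j^{m} = 0$ for $m < d$ and $= (-1)^d d!$ for $m = d$ (which itself follows from the same shift-operator calculus applied to $k^m$), but the inductive argument seems cleaner and self-contained.
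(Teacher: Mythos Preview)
Your inductive argument is correct; the only cosmetic issue is the mid-sentence self-correction in the base case, which you should clean up before final form. The paper itself does not supply a proof of this lemma at all: it simply refers the reader to a standard textbook (Elaydi's \emph{Introduction to Difference Equations}), so your write-up is strictly more self-contained than what appears there.
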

\begin{proof}
See e.g. \cite{elaydi2005ide}.
\end{proof}
Now assume that $g\left(k\right):\reals^{+}\to\reals$ is a given
function. Let us perform a change of variable $y=\frac{1}{k}$, and
define $G(y)\isdef g\left(\frac{1}{y}\right)=g(k)$. With this notation,
we have\[
\Delta g\left(k\right)=g(k+1)-g(k)=g\left(\frac{1}{y}+1\right)-g\left(\frac{1}{y}\right)=G\left(\frac{y}{1+y}\right)-G\left(y\right)\]
We subsequently define the ``dual'' operator $\combop$ as:\[
\combop\left\{ G(y)\right\} \isdef G\left(\frac{y}{1+y}\right)-G\left(y\right)\]

The operator $\combop$ has an interesting property of ``killing''
the lowest-order Taylor coefficient at 0.
\begin{prop}
\label{prop:L-property}Let $H(y)$ be analytic at $y=0$, such that
$H(y)=h_{m}y^{m}+h_{m+1}y^{m+1}+\dots$. Then for $n\in\naturals$,
the function $\combop^{n}\left\{ H\left(y\right)\right\} $ is analytic
at $0$ with Taylor expansion\[
\combop^{n}\left\{ H\left(y\right)\right\} =h_{m+n}^{*}y^{m+n}+h_{m+n+1}^{*}y^{m+n+1}+\dots\]
\end{prop}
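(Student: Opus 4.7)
The plan is to prove the statement by induction on $n$, with the induction reducing everything to showing that a single application of $\combop$ raises the vanishing order at $0$ by at least one.

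The base case $n=0$ is trivial. For the inductive step, observe that $\combop^{n+1} H = \combop(\combop^n H)$, so it suffices to prove: if a function $F$ is analytic at $0$ with $F(y) = c_p y^p + c_{p+1} y^{p+1} + \dots$ (so $F$ vanishes to order at least $p$ at $0$), then $\combop F$ is analytic at $0$ and vanishes there to order at least $p+1$. Applying this with $F = \combop^n H$ and $p = m+n$ yields the claim.

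To prove this single-step statement, I would factor $F(y) = y^p h(y)$ where $h$ is analytic near $0$ with $h(0) = c_p$. Since $w(y) \isdef y/(1+y)$ is analytic near $y=0$ and vanishes to order $1$, the composition $h(w(y))$ is analytic near $0$. A direct computation gives
\[
\combop F(y) = F\!\left(\frac{y}{1+y}\right) - F(y) = \frac{y^p}{(1+y)^p}\, h\!\left(\frac{y}{1+y}\right) - y^p h(y) = y^p\Bigl[(1+y)^{-p} h(w(y)) - h(y)\Bigr].
\]
The bracketed expression is analytic near $0$ and, when evaluated at $y = 0$, equals $h(0) - h(0) = 0$. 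Hence the bracket is divisible by $y$, and $\combop F(y)$ has order at least $p+1$ at $0$, which completes the induction.

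There is no real obstacle; the only point requiring slight care is ensuring that the composition $h(y/(1+y))$ is genuinely analytic in a neighborhood of the origin, which follows because $y \mapsto y/(1+y)$ fixes $0$ and is analytic on, say, $|y| < 1/2$, so for small enough $y$ the argument $w(y)$ lies inside the disk of convergence of $h$. Note that the resulting coefficient $h_{m+n}^*$ in the statement is allowed to be zero; the proposition only asserts that no nonzero terms appear before $y^{m+n}$, which is exactly what the induction delivers.
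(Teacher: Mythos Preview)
Your proof is correct and follows essentially the same approach as the paper: both proceed by induction on $n$, reducing to the single-step claim that $\combop$ raises the vanishing order by one, and both establish this by observing that the leading terms of $F(y/(1+y))$ and $F(y)$ coincide. The only cosmetic difference is that the paper substitutes the expansion $z(y)=y-y^2+\dots$ into $U$ to see that the leading coefficient is preserved, whereas you factor $F(y)=y^p h(y)$ and check that the bracketed expression vanishes at $0$; these are equivalent computations.
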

\begin{proof}
The proof is by induction on $n$. The basis $n=0$ is given. Assuming
that $U\left(y\right)=\combop^{n-1}\left\{ H(y)\right\} $ is analytic
at 0 and $U\left(y\right)=u_{m+n-1}^{*}y^{m+n-1}+u_{m+n-1}^{*}y^{m+n-1}+\dots$,
consider the function $\combop\left\{ U\left(y\right)\right\} =\combop^{n}\left\{ H(y)\right\} $.
Let $z=\frac{y}{1+y}$, and so for $\left|y\right|<1$ we have\[
z\left(y\right)=y\left(1-y+y^{2}+\dots\right)=y-y^{2}+\dots\]
Making the analytic change of coordinates $y\to z\left(y\right)$
we conclude that the Taylor expansion of $U\left(z\left(y\right)\right)$
at the origin is \[
U\left(z\left(y\right)\right)=u_{m+n-1}^{*}z^{m+n-1}+\dots=u_{m+n-1}^{*}y^{m+n-1}+\dots\]
That is, the leading coefficient is the same as in the Taylor expansion
of $U\left(y\right)$. Therefore\[
\begin{split}\combop\left\{ U\left(y\right)\right\}  & =U\left(z\right)-U\left(y\right)\\
 & =u_{m+n}^{**}y^{m+n}+u_{m+n+1}^{**}y^{m+n+1}+\dots\end{split}
\]
This expansion holds in some neighborhood of the origin.\end{proof}
\begin{lem}
\label{lem:binomial-sum-fractions}Let $l,d\in\naturals$. Then there
exist positive constants $\Cl{dd-bound},\Cl[kk]{kk-dd-bound}$ such
that for all $k>\Cr{kk-dd-bound}$\[
\left|\sum_{j=0}^{d}(-1)^{j}\binom{d}{j}\frac{1}{(k+j)^{l}}\right|<\frac{\Cr{dd-bound}}{k^{d+l}}\]
\end{lem}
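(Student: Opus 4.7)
The plan is to recognize the sum as a $d$-fold forward difference and then exploit the ``dual'' operator $\combop$ introduced just before the statement. Specifically, setting $g(k)\isdef k^{-l}$ and invoking formula \eqref{eq:difference-operator-power}, we have
\[
\sum_{j=0}^{d}(-1)^{j}\binom{d}{j}\frac{1}{(k+j)^{l}}=(-1)^{d}\,\Delta^{d}g(k).
\]
Under the change of variables $y=1/k$, the function $g(k)$ becomes $G(y)=y^{l}$, and the operator $\Delta$ in the $k$-variable corresponds precisely to the operator $\combop$ in the $y$-variable. Consequently
\[
\Delta^{d}g(k)=\combop^{d}\{G(y)\}\Big|_{y=1/k}=\combop^{d}\bigl\{y^{l}\bigr\}\Big|_{y=1/k}.
\]

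Next I apply \prettyref{prop:L-property} with $H(y)=y^{l}$, i.e.\ with leading exponent $m=l$. The proposition guarantees that $\combop^{d}\{y^{l}\}$ is analytic at $y=0$ and its Taylor expansion begins with $y^{l+d}$:
\[
\combop^{d}\bigl\{y^{l}\bigr\}=h^{*}_{l+d}\,y^{l+d}+h^{*}_{l+d+1}\,y^{l+d+1}+\cdots,
\]
valid in some fixed neighborhood $|y|<\rho$ of the origin (the radius $\rho$ depends only on $l$ and $d$, since $\combop$ is built from the Möbius transformation $y\mapsto y/(1+y)$ whose iterates are analytic in a uniform neighborhood of $0$).

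From this point the bound is immediate. Choose $\Cr{kk-dd-bound}=1/\rho$ so that $k>\Cr{kk-dd-bound}$ ensures $y=1/k$ lies in the disk of convergence. Within this disk the analytic function $\combop^{d}\{y^{l}\}/y^{l+d}$ is continuous on a closed sub-disk, hence bounded by some constant $\Cr{dd-bound}>0$ depending only on $l$ and $d$. Therefore
\[
\left|\combop^{d}\bigl\{y^{l}\bigr\}\right|\leq\Cr{dd-bound}\,|y|^{l+d}\qquad\text{for }|y|<\tfrac{\rho}{2},
\]
which upon setting $y=1/k$ yields the claimed estimate. The only non-routine step is the invocation of \prettyref{prop:L-property}; everything else is bookkeeping in the change of variables, and no real obstacle arises.
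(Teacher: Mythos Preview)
Your proof is correct and follows essentially the same route as the paper: identify the sum as $(-1)^{d}\Delta^{d}g(k)$ with $g(k)=k^{-l}$, pass to the $y=1/k$ variable so that $\Delta$ becomes $\combop$, and invoke \prettyref{prop:L-property} on $G(y)=y^{l}$ to kill the first $l+d$ Taylor coefficients. The only cosmetic difference is in the final bounding step: the paper writes $\combop^{d}\{y^{l}\}$ explicitly as $\sum_{j}(-1)^{j}\binom{d}{j}\bigl(y/(1+jy)\bigr)^{l}$, reads off the radius $1/d$, bounds the coefficients $f_{i}$ by evaluating at $k=d+1$, and sums a geometric tail, whereas you appeal directly to boundedness of the analytic quotient $\combop^{d}\{y^{l}\}/y^{l+d}$ on a compact sub-disk.
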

\begin{proof}
Let $g(k)=\frac{1}{k^{l}}$. It is easy to check (see \eqref{eq:difference-operator-power})
that\[
\begin{split}A_{l,d}(k)\isdef\sum_{j=0}^{d}(-1)^{j}\binom{d}{j}\frac{1}{(k+j)^{l}} & =\Delta^{d}g(k)\end{split}
\]

The proof of the claim is in two steps. First, we shall develop the
expression $A_{l,d}(k)$ into power series in $\frac{1}{k}$ converging
for sufficiently large $k$. Then, based on this representation we
shall establish the required bound.

Let $y=\frac{1}{k}$. According to our notation, let $G\left(y\right)=g\left(\frac{1}{y}\right)=g\left(k\right)$
and so $A_{l,d}(k)=\combop^{d}\left\{ G\left(y\right)\right\} \isdef F(y)$.
Furthermore, for all $j=0,1,\dots$ we have\[
k+j=\frac{1}{y}+j=\frac{1+jy}{y}\]
and so\[
F\left(y\right)=\sum_{j=0}^{d}\left(-1\right)^{j}{d \choose j}G\left(\frac{y}{1+jy}\right)\]
Substituting $G\left(y\right)=y^{l}$, we conclude that $F(y)$ is
a real analytic function of $y$ in the disk $\left|y\right|<\frac{1}{d}$,
and so it can be written as a converging power series\[
F(y)=\sum_{i=0}^{\infty}f_{i}y^{i}\]
Applying \prettyref{prop:L-property} to $G\left(y\right)$ we conclude
that $f_{0}=\dots=f_{l+d-1}=0$. Therefore the expansion\begin{equation}
A_{l,d}\left(k\right)=\sum_{i=l+d}^{\infty}\frac{f_{i}}{k^{i}}\label{eq:ald-exp-k}\end{equation}
holds for $k>d$.

Let us now estimate the magnitude of the coefficients $f_{i}$. Since
\eqref{eq:ald-exp-k} is valid for $k=d+1$, then there exists a constant
$\Cl{ald-coeffs-bound}$ such that $\left|f_{i}\left(d+1\right)^{-i}\right|<\Cr{ald-coeffs-bound}$
for all $i\in\naturals$ and therefore\[
\left|f_{i}\right|<\Cr{ald-coeffs-bound}\left(d+1\right)^{i}\]
But then for arbitrary $k\geq d+2$ we have\[
\begin{split}\left|A_{l,d}\left(k\right)\right| & =\left|\sum_{i=0}^{\infty}\frac{f_{l+d+i}}{k^{l+d+i}}\right|<\frac{\Cr{ald-coeffs-bound}\left(d+1\right)^{l+d}}{k^{l+d}}\sum_{i=0}^{\infty}\frac{\left(d+1\right)^{i}}{k^{i}}\\
 & \leq\frac{\Cr{ald-coeffs-bound}\left(d+1\right)^{l+d}}{k^{l+d}}\cdot\frac{1}{1-\frac{d+1}{d+2}}\leq\frac{\Cr{dd-bound}}{k^{l+d}}\qedhere\end{split}
\]
\end{proof}
\begin{lem}
\label{lem:basic-recurrence}Let $\omega\in\complexfield$, $a_{0},\dotsc,a_{n}\in\complexfield$
and $n,k\in\naturals$. Denote $b_{k}\isdef\omega^{k}\cdot\left(a_{0}+a_{1}k+\dotsc a_{n}k^{n}\right)$.
Then\[
\sum_{j=0}^{n+1}\left(-1\right)^{j}\binom{n+1}{j}b_{k+j}\omega^{n+1-j}\equiv0\]
\end{lem}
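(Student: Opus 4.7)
The plan is to reduce the claim to the two already-established facts about the difference operator $\Delta$: the binomial expansion \eqref{eq:difference-operator-power} that expresses $\Delta^{d}$ applied to a sequence $g(k)$ as an alternating binomial sum of shifts of $g$, and \prettyref{lem:polynomial-delta} which says that $\Delta^{n+1}$ annihilates every polynomial of degree at most $n$.

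First I would factor out the common power of $\omega$. Since $b_{k+j}=\omega^{k+j}p(k+j)$ with $p(t)\isdef a_{0}+a_{1}t+\dots+a_{n}t^{n}$, we have
\[
b_{k+j}\,\omega^{n+1-j}=\omega^{k+n+1}\,p(k+j),
\]
and the factor $\omega^{k+n+1}$ is independent of the summation index $j$. Pulling it out of the sum we are left with
\[
\sum_{j=0}^{n+1}(-1)^{j}\binom{n+1}{j}b_{k+j}\omega^{n+1-j}=\omega^{k+n+1}\sum_{j=0}^{n+1}(-1)^{j}\binom{n+1}{j}p(k+j).
\]

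Next I would identify the inner sum with a forward difference of $p$. By \eqref{eq:difference-operator-power} applied to $g=p$ and $d=n+1$,
\[
\sum_{j=0}^{n+1}(-1)^{j}\binom{n+1}{j}p(k+j)=(-1)^{n+1}\Delta^{n+1}p(k).
\]
Since $p$ is a polynomial of degree exactly $n$ (or less, if some leading $a_{i}$ vanish), \prettyref{lem:polynomial-delta} gives $\Delta^{n+1}p(k)=0$ for every $k$. Substituting back yields the desired identity. There is no real obstacle here; the lemma is essentially a one-line translation of the classical fact that the $(n{+}1)$-st finite difference kills polynomials of degree $\le n$, with the exponential factor $\omega^{k+j}$ canceled against the companion weight $\omega^{n+1-j}$.
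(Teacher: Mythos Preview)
Your proof is correct and follows essentially the same approach as the paper: factor out $\omega^{k+n+1}$, recognize the remaining alternating binomial sum as $(-1)^{n+1}\Delta^{n+1}p(k)$ via \eqref{eq:difference-operator-power}, and invoke \prettyref{lem:polynomial-delta} to conclude it vanishes.
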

\begin{proof}
Denote\[
p(k)\isdef a_{0}+a_{1}k+\dotsc a_{n}k^{n}\]
Then we rewrite the given expression as\begin{eqnarray*}
\begin{split}\sum_{j=0}^{n+1}\left(-1\right)^{j}\binom{n+1}{j}b_{k+j}\omega^{n+1-j} & =\sum_{j=0}^{n+1}(-1)^{j}\binom{n+1}{j}p(k+j)\omega^{n+k+1}\\
 & =(-1)^{d+1}\omega^{n+k+1}\Delta^{n+1}p(k)\\
\explntext{Lemma\;\ref{lem:polynomial-delta}} & =0\end{split}
\end{eqnarray*}
The claim is therefore proved.\end{proof}
\begin{lem}
\label{lem:dimdim-special-values}Let\[
\dimdim(d,t,s)\isdef\sum_{j=s}^{d+1}(-1)^{j}\binom{j}{s}\binom{d+1}{j}j^{d-t}\]
The following statements are true for all $s=0,1,\dots,d+1$:
\begin{enumerate}
\item If $t\geq s$ then $\dimdim(d,t,s)=0$
\item $\dimdim(d,s-1,s)=(-1)^{d+1}(d+1-s)!\binom{d+1}{s}$
\end{enumerate}
\end{lem}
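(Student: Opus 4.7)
The plan is to reduce both statements to a single computation involving the finite difference operator $\Delta$ applied to the monomial $k^{d-t}$, and then invoke \prettyref{lem:polynomial-delta}.

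First I would use the well-known trinomial identity $\binom{j}{s}\binom{d+1}{j}=\binom{d+1}{s}\binom{d+1-s}{j-s}$ to factor out $\binom{d+1}{s}$, obtaining
\[
\dimdim(d,t,s)=\binom{d+1}{s}\sum_{j=s}^{d+1}(-1)^{j}\binom{d+1-s}{j-s}j^{d-t}.
\]
Then the substitution $i=j-s$ turns this into
\[
\dimdim(d,t,s)=(-1)^{s}\binom{d+1}{s}\sum_{i=0}^{d+1-s}(-1)^{i}\binom{d+1-s}{i}(s+i)^{d-t}.
\]
Comparing with \eqref{eq:difference-operator-power}, the inner sum is precisely $(-1)^{d+1-s}\Delta^{d+1-s}g(s)$ where $g(k)=k^{d-t}$. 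Therefore
\[
\dimdim(d,t,s)=(-1)^{d+1}\binom{d+1}{s}\,\Delta^{d+1-s}\bigl[k^{d-t}\bigr]_{k=s}.
\]

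From here both assertions follow directly from \prettyref{lem:polynomial-delta}. For part (1), if $t\geq s$ then $d-t\leq d-s<d+1-s$, so the $(d+1-s)$-th finite difference of a polynomial of strictly smaller degree vanishes identically, giving $\dimdim(d,t,s)=0$. For part (2), setting $t=s-1$ gives $d-t=d+1-s$, and the leading coefficient of $k^{d+1-s}$ is $1$, so by \prettyref{lem:polynomial-delta} we have $\Delta^{d+1-s}[k^{d+1-s}]=(d+1-s)!$, yielding
\[
\dimdim(d,s-1,s)=(-1)^{d+1}(d+1-s)!\binom{d+1}{s}.
\]

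There is no serious obstacle here; the only mildly delicate point is the choice of index manipulation that converts the double-binomial sum into a pure $s$-independent finite difference on the monomial $k^{d-t}$. Once the trinomial identity is applied and the index shifted, the rest is a mechanical invocation of the polynomial degree lemma recorded earlier in the appendix.
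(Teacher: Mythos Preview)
Your proof is correct and follows essentially the same route as the paper's: the paper also shifts the index by $s$, simplifies the product of binomials (it expands the factorials directly rather than citing the trinomial identity, and introduces the shorthand $\alpha=d-s$, $\beta=d-t$), and arrives at the identical formula $\dimdim(d,t,s)=(-1)^{d+1}\binom{d+1}{s}\,\Delta^{d+1-s}g(s)$ with $g(k)=k^{d-t}$, after which both parts follow from \prettyref{lem:polynomial-delta} exactly as you indicate.
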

\begin{proof}
Let $\alpha\isdef d-s$ and $\beta\isdef d-t$. Now\[
\begin{split}\dimdim(d,t,s) & =\sum_{j=0}^{d+1-s}(-1)^{j+s}\binom{j+s}{s}\binom{d+1}{j+s}(j+s)^{d-t}\\
 & =\sum_{j=0}^{\alpha+1}(-1)^{j+s}\frac{(j+s)!}{s!j!}\cdot\frac{(d+1)!}{(j+s)!(d+1-j-s)!}(j+s)^{\beta}\\
 & =(-1)^{d-\alpha}\sum_{j=0}^{\alpha+1}(-1)^{j}\frac{(d+1)!}{(d-\alpha)!j!(\alpha+1-j)!}(j+s)^{\beta}\cdot\frac{(\alpha+1)!}{(\alpha+1)!}\\
 & =(-1)^{d-\alpha}\binom{d+1}{\alpha+1}\sum_{j=0}^{\alpha+1}(-1)^{j}\binom{\alpha+1}{j}(j+s)^{\beta}\end{split}
\]
Now let $g(s)\isdef s^{\beta}$ be a polynomial of degree $\beta$.
By \eqref{eq:difference-operator-power}, the above expression can
be rewritten as\begin{equation}
\dimdim(d,t,s)=(-1)^{d+1}\binom{d+1}{\alpha+1}\Delta^{\alpha+1}g(s)\label{eq:dimdim-through-delta}\end{equation}

\begin{enumerate}
\item Assume $t\geq s$. Then $\alpha+1\geq\beta+1$, and so by \prettyref{lem:polynomial-delta}
we have $\Delta^{\alpha+1}g(s)=0$. This completes the proof of the
first part.
\item Let $t=s-1$. By \prettyref{lem:polynomial-delta} we have $\Delta^{\alpha+1}g(s)=(\alpha+1)!$
and so by \eqref{eq:dimdim-through-delta} \[
\dimdim(d,t,s)=(-1)^{d+1}\binom{d+1}{\alpha+1}(\alpha+1)!=(-1)^{d+1}(d-s+1)!\binom{d+1}{s}\]
This completes the proof of the second part.\qedhere
\end{enumerate}
\end{proof}

\section{\label{app:misc-results}Miscellaneous auxiliary results}
\begin{thm}[Rouche's theorem]
\label{thm:rouche}Let the polynomial $q(z)\in\complexfield[z]$
be a sum $q(z)=p(z)+e(z)$. Let $z_{0}$ be a simple zero of $p(z)$.
If there exists $\rho\in\reals^{+}$ such that \[
\left|p(z)\right|>\left|e(z)\right|\qquad\forall z\in\partial B_{\rho}(z_{0})\]
then $q(z)$ has a simple zero inside $B_{\rho}(z_{0})$.\end{thm}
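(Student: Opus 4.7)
The plan is to deduce this specialized statement from the classical Rouch\'e theorem of complex analysis, which asserts that if two holomorphic functions $p$ and $q = p+e$ satisfy $|p(z)| > |e(z)|$ on a simple closed contour $\gamma$, then $p$ and $q$ have the same number of zeros (counted with multiplicity) inside $\gamma$. Applied to $\gamma = \partial B_{\rho}(z_{0})$, the hypothesis forces $p$ to be nonvanishing on $\gamma$ (since $|p| > |e| \geq 0$ there), so both the number of zeros of $p$ inside and the contour integral used below are well-defined.

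The standard way to prove the classical Rouch\'e theorem is via the argument principle. One considers the homotopy $p_{t}(z) = p(z) + t\, e(z)$ for $t \in [0,1]$. On $\partial B_{\rho}(z_{0})$, the triangle inequality gives $|p_{t}(z)| \geq |p(z)| - t|e(z)| \geq |p(z)| - |e(z)| > 0$, so $p_{t}$ has no zeros on the boundary for any $t$. Consequently, the winding-number integral
\[
N(t) = \frac{1}{2\pi \imath} \oint_{\partial B_{\rho}(z_{0})} \frac{p_{t}'(z)}{p_{t}(z)}\, dz,
\]
which counts zeros of $p_{t}$ inside $B_{\rho}(z_{0})$ with multiplicity, depends continuously on $t$. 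Since it takes only integer values, $N$ is constant, and in particular $N(0) = N(1)$, i.e.\ $p$ and $q$ have the same zero-count inside.

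To conclude, I would note that the hypothesis of the theorem is always invoked in contexts where the radius $\rho$ is chosen small enough that $z_{0}$ is the \emph{only} zero of $p$ inside $B_{\rho}(z_{0})$ (this is the situation throughout \prettyref{lem:skd-roots-deviation} and \prettyref{lem:qkd-root-deviation}, where $\rho(k)$ shrinks faster than the separation of roots of $p_{k}^{d}$). Since $z_{0}$ is a simple zero of $p$, this gives $N(0) = 1$, hence $N(1) = 1$, so $q$ possesses exactly one zero in $B_{\rho}(z_{0})$ counted with multiplicity, which is therefore a simple zero. The only real obstacle is bookkeeping the simplicity/uniqueness statement cleanly; the analytic content is entirely the classical Rouch\'e argument, which is standard and requires no adaptation.
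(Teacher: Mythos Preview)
The paper does not actually prove this statement; \prettyref{thm:rouche} is merely quoted in the appendix as a standard auxiliary result, with no proof attached. Your argument via the homotopy $p_t = p + te$ and the argument principle is the textbook proof of the classical Rouch\'e theorem, and it is correct.

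You are also right to flag the one genuine subtlety: as literally stated, the theorem only assumes $z_0$ is \emph{a} simple zero of $p$, not that it is the \emph{only} zero of $p$ in $B_\rho(z_0)$. Without that extra assumption the conclusion ``$q$ has a simple zero'' does not follow (Rouch\'e only gives equality of zero-counts). Your observation that in every application in the paper (\prettyref{lem:skd-roots-deviation}, \prettyref{lem:qkd-root-deviation}) the radius $\rho(k)$ is chosen smaller than the root separation, so that $z_0$ is indeed the unique zero of $p$ in the disk, is exactly the missing hypothesis and the correct way to close the gap.
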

\begin{lem}
\label{lem:lower-bound-values-circle-via-derivatives}Let there be
given a sequence of polynomials $P_{k}(z):\complexfield\to\complexfield$
and a point $z_{0}\in\complexfield$ such that
\begin{enumerate}
\item $P_{k}(z_{0})=0$ for all $k\in\naturals$;
\item $\bigl|P'_{k}(z_{0})\bigr|\geq\Cl{derivative-bound}$ for all $k\in\naturals$
and some constant $\Cr{derivative-bound}$ independent of $k$;
\item For every fixed $k$ the following inequality holds for all $z\in B_{k^{-1}}(z_{0})$\[
\bigl|P''_{k}(z)\bigr|\leq\Cl{second-derivative-bound}k\]
where $\Cr{second-derivative-bound}$ is a constant independent of
$k$.
\end{enumerate}
Let $\rho(k):\naturals\to\reals$ satisfy\[
0<\rho(k)<\min\left(\frac{1}{k},\frac{\Cr{derivative-bound}}{\Cr{second-derivative-bound}k}\right)\]

Then there exists a constant $\Cl{generic-1}$ independent of $\rho(k)$
such that for all $k$, the following holds for every $z\in\partial B_{\rho(k)}(z_{0})$:\[
\bigl|P_{k}(z)\bigr|\geq\Cr{generic-1}\rho(k)\]

\end{lem}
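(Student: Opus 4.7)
The plan is a straightforward Taylor expansion of $P_k$ around $z_0$, using the hypothesis $P_k(z_0) = 0$ to eliminate the constant term, the lower bound on $|P_k'(z_0)|$ to control the linear term, and the upper bound on $|P_k''|$ in $B_{k^{-1}}(z_0)$ to control the remainder.

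First, I would fix $k$ and an arbitrary $z \in \partial B_{\rho(k)}(z_0)$. Since $\rho(k) < 1/k$, the closed segment from $z_0$ to $z$ lies inside the disk $B_{k^{-1}}(z_0)$, so by the integral form of Taylor's remainder theorem applied to the polynomial $P_k$,
\[
P_k(z) \;=\; P_k'(z_0)\,(z-z_0) \;+\; R_k(z),\qquad R_k(z) \;=\; \int_0^1 (1-t)\,P_k''\bigl(z_0 + t(z-z_0)\bigr)\,(z-z_0)^2\,\dd t.
\]
The hypothesis on the second derivative then yields
\[
\bigl|R_k(z)\bigr| \;\leq\; \tfrac{1}{2}\,\Cr{second-derivative-bound}\,k\,\rho(k)^2.
\]

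Next, the reverse triangle inequality combined with the lower bound on $|P_k'(z_0)|$ gives
\[
\bigl|P_k(z)\bigr| \;\geq\; \bigl|P_k'(z_0)\bigr|\,\rho(k) \;-\; \bigl|R_k(z)\bigr| \;\geq\; \Cr{derivative-bound}\,\rho(k) \;-\; \tfrac{1}{2}\,\Cr{second-derivative-bound}\,k\,\rho(k)^2 \;=\; \rho(k)\Bigl(\Cr{derivative-bound} - \tfrac{1}{2}\,\Cr{second-derivative-bound}\,k\,\rho(k)\Bigr).
\]
The assumption $\rho(k) < \Cr{derivative-bound}/\Cr{second-derivative-bound}\,k$ implies $\Cr{second-derivative-bound}\,k\,\rho(k) < \Cr{derivative-bound}$, so the parenthesized factor exceeds $\tfrac{1}{2}\,\Cr{derivative-bound}$, and we obtain the desired bound with $\Cr{generic-1} \isdef \tfrac{1}{2}\,\Cr{derivative-bound}$, which is independent of $\rho(k)$ and of $k$.

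There is essentially no obstacle here: the argument is a routine quantitative version of the implicit function type estimate near a simple zero, and the two constraints on $\rho(k)$ are exactly what is needed so that (i) the Taylor remainder estimate is valid on the whole segment and (ii) the quadratic error is dominated by the linear main term. The only mild subtlety is that we must use an integral (or Hadamard-type) remainder rather than a mean-value form, since $P_k$ is complex-valued; but for polynomials this is automatic.
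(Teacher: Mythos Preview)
Your proof is correct and follows essentially the same approach as the paper: Taylor-expand $P_k$ at $z_0$, bound the linear term below by $\Cr{derivative-bound}\rho(k)$ and the quadratic remainder above by $\tfrac{1}{2}\Cr{second-derivative-bound}k\rho(k)^2$, then use the hypothesis on $\rho(k)$ to conclude with $\Cr{generic-1}=\tfrac{1}{2}\Cr{derivative-bound}$. Your use of the integral remainder is in fact slightly more careful than the paper's invocation of the Lagrange form, which is not literally available for complex-valued functions, though it yields the same bound.
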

\begin{proof}
Let us write the truncated Taylor expansion of $P_{k}$ around $z_{0}$
with remainder in Lagrange form:\[
P_{k}\left(z_{0}+\rho(k)e^{\imath\theta}\right)=P_{k}(z_{0})+\underbrace{P'_{k}(z_{0})\rho(k)\ee^{\imath\theta}}_{=E_{1}}+\underbrace{\frac{P''_{k}(\xi)}{2}\rho^{2}(k)\ee^{2\imath\theta}}_{=E_{2}}\]
for some $\xi\in B_{\rho}(z_{0})$. Now since $\rho(k)\leq\frac{1}{k}$
we have\[
\begin{split}\left|E_{1}\right| & \geq\Cr{derivative-bound}\rho(k)\\
\left|E_{2}\right| & \leq\frac{\Cr{second-derivative-bound}k\rho^{2}(k)}{2}\end{split}
\]

On the other hand,\[
\begin{split}\rho(k) & \leq\frac{\Cr{derivative-bound}}{\Cr{second-derivative-bound}k}\\
\frac{\Cr{second-derivative-bound}k\rho^{2}(k)}{2} & \leq\frac{\Cr{derivative-bound}\rho(k)}{2}\end{split}
\]

Therefore $\frac{\left|E_{1}\right|}{2}\geq\left|E_{2}\right|$ and
so by taking $\Cr{generic-1}\isdef\frac{\Cr{derivative-bound}}{2}$
we have $\bigl|P_{k}(z)\bigr|\geq\Cr{generic-1}\rho(k)$.\end{proof}
\begin{prop}
\label{prop:neg-exp-estimate}Let $n\in\naturals$ be given. Then
for $\left|x\right|<\frac{3}{n+2}$ the following estimate holds:\[
\left(1+x\right)^{-n}=1-nx+\frac{n\left(n+1\right)}{2}x^{2}\Cl[rr]{rr-geomtail}\left(x\right)\qquad\text{where}\quad\left|\Cr{rr-geomtail}\left(x\right)\right|<\frac{1}{1-\frac{x\left(n+2\right)}{3}}\]
In general, for approximation of order $d$, we have for $\left|x\right|<\frac{d+2}{n+d+1}$\[
\left(1+x\right)^{-n}=1-nx+\frac{n\left(n+1\right)}{2}x^{2}+\dots+\left(-1\right)^{d}\frac{n\times\cdots\times\left(n+d-1\right)}{d!}x^{d}+\left(-1\right)^{d+1}\frac{n\times\cdots\times\left(n+d\right)}{\left(d+1\right)!}x^{d+1}\Cl[rr]{rr-geomtail2}\left(x\right)\]
where\[
\left|\Cr{rr-geomtail2}\left(x\right)\right|<\frac{1}{1-\frac{\left(n+d+1\right)}{d+2}x}\]
\end{prop}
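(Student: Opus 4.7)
The plan is to prove both estimates simultaneously by regarding the first as the specialization $d=1$ of the second. The starting point will be the generalized binomial expansion
\[
(1+x)^{-n} = \sum_{k=0}^{\infty} (-1)^{k}\,\frac{n(n+1)\cdots(n+k-1)}{k!}\,x^{k},
\]
valid for $|x|<1$, which immediately matches the displayed polynomial part up to degree $d$. It then remains to control the tail, i.e.\ the sum from $k=d+1$ to $\infty$.

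The key step will be to factor out the leading tail term. Writing $k=d+1+j$ with $j\geq 0$, I would express the remainder as
\[
(-1)^{d+1}\frac{n(n+1)\cdots(n+d)}{(d+1)!}\,x^{d+1}\cdot \sum_{j=0}^{\infty}(-1)^{j}\,x^{j}\prod_{i=1}^{j}\frac{n+d+i}{d+1+i},
\]
so that the function $\Cr{rr-geomtail2}(x)$ is identified with the inner series (with the empty product for $j=0$ equal to $1$). The task therefore reduces to bounding $\bigl|\sum_{j\geq0}(-1)^{j} x^{j}\prod_{i=1}^{j}\tfrac{n+d+i}{d+1+i}\bigr|$ by $1/(1-\tfrac{n+d+1}{d+2}|x|)$.

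The estimate on the product is the technical heart, but it is elementary. I would show that the ratio $\tfrac{n+d+i}{d+1+i}=1+\tfrac{n-1}{d+1+i}$ is nonincreasing in $i$ for $n\geq 1$, so that each factor is bounded above by its first instance $\tfrac{n+d+1}{d+2}$. (For $n=0$ the statement is vacuous since $(1+x)^{0}=1$; and for $n\geq 1$ this monotonicity argument applies.) Consequently $\prod_{i=1}^{j}\tfrac{n+d+i}{d+1+i}\leq\bigl(\tfrac{n+d+1}{d+2}\bigr)^{j}$, and the absolute value of the tail series is dominated by the geometric series $\sum_{j\geq0}\bigl(\tfrac{n+d+1}{d+2}\bigr)^{j}|x|^{j}$, which converges exactly to $\bigl(1-\tfrac{n+d+1}{d+2}|x|\bigr)^{-1}$ precisely under the hypothesis $|x|<\tfrac{d+2}{n+d+1}$. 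Specializing $d=1$ yields the first displayed bound with $|x|<3/(n+2)$ and constant $1/(1-\tfrac{(n+2)x}{3})$.

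The only real obstacle is the monotonicity claim for $\tfrac{n+d+i}{d+1+i}$; everything else is bookkeeping. Once that monotonicity is in hand, the remainder is majorized term by term by a convergent geometric series whose sum is exactly the claimed bound, and there is no need to invoke any integral form of the remainder or delicate analytic continuation.
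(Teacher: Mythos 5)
Your argument is correct and is precisely the route the paper takes: its entire proof reads ``Standard majorization of the Taylor series tail by a geometric series,'' and your factoring out of the leading tail term together with the monotonicity bound $\prod_{i=1}^{j}\frac{n+d+i}{d+1+i}\leq\left(\frac{n+d+1}{d+2}\right)^{j}$ is exactly that majorization, with the first display recovered as the case $d=1$. No gap to report.
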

\begin{proof}
Standard majorization of the Taylor series tail by a geometric series.
\end{proof}

\section{\label{app:mollifier-explicit-construction}Explicit construction
of a bump}

In this appendix we present an explicit construction of the bump function
which we used in our numerical simulations. We also derive an explicit
bound for the size of its Fourier coefficients, to be used in the
proof of localization accuracy.

Let there be given two parameters $t$ and $E$ with $2E>t$, together
with the point $\jp\in\reals$. Our goal is to build a function $g=g_{E,t}(x;\jp$)
which satisfies the following conditions:
\begin{itemize}
\item [(G1)]$g\equiv0$ for $x\notin\left[\jp-E,\jp+E\right]$;
\item [(G2)]$g\equiv1$ for $x\in\left[\jp-\frac{t}{2},\jp+\frac{t}{2}\right]$;
\item [(G3)]$g\in C^{\infty}\left(\reals\right)$;
\item [(G4)]the Fourier coefficients of $g$ decay as rapidly as possible.
\end{itemize}
The idea is to take a standard $C^{\infty}$ mollifier, scale it and
convolve with a box function.

We define two new parameters: the scaling factor $s$ and the width
of the box $r$. Note that our construction implies $r\geq2s$, because
otherwise the result of the convolution will not have a flat segment
in the middle.

Let us therefore take the standard $C^{\infty}$ mollifier\[
\Psi(x)=\begin{cases}
\ee^{-1/(1-x^{2})} & \mbox{ for }|x|<1\\
0 & \mbox{ otherwise}\end{cases}\]
and scale it between $-s$ and $s$ for some $s>0$:\[
m_{s}(x)=\frac{1}{s\Delta}\Psi\left(\frac{x}{s}\right)\]
where\[
\Delta=\frac{1}{s}\int_{-s}^{s}\Psi\left(\frac{x}{s}\right)\dd x=\int_{-1}^{1}\Psi(y)\dd y\sim0.443994\]

Now we take a box function centered at $\jp$, having width $r$:\[
b_{r}(x;\xi)=\begin{cases}
1 & \mbox{ for }-\frac{r}{2}\leq x-\xi\leq\frac{r}{2}\\
0 & \mbox{ otherwise}\end{cases}\]

Finally we convolve the two and get a smooth bump:\[
g=g_{r,s}(x;\xi)=b_{r}(x;\xi)*m_{s}(x)=\frac{1}{s\Delta}\int_{\xi-\frac{r}{2}}^{\xi+\frac{r}{2}}\Psi\left(\frac{x-t}{s}\right)\dd t\]

The new parameters $s,r$ should be compatible with the original $E,t$.
In particular, we want to have a strip of width $t$ in the center,
and the extent of the whole bump should not exceed $E$. Therefore
we have the following compatibility conditions:\begin{equation}
\begin{split}s+\frac{t}{2} & <\frac{r}{2}\\
2s+\frac{r}{2} & <E\end{split}
\label{eq:bump-params-compatibility}\end{equation}

The function $g$ so constructed clearly satisfies conditions $\left(G1\right)-\left(G3\right)$
above. Let us now maximize the decay of its Fourier coefficients.
By definition:\[
c_{k}(g)=\frac{1}{2\pi s\Delta}\int_{-\pi}^{\pi}\ee^{-\imath kx}\Biggl\{\int_{\xi-\frac{r}{2}}^{\xi+\frac{r}{2}}\Psi\left(\frac{x-t}{s}\right)\dd t\Biggr\}\dd x\]

Notice first that $\Psi(z)$ is zero outside the region $-1\leq z\leq1$,
therefore we can make the change of variables $z\to t-x,t\to t$ and
rewrite the integral as \[
c_{k}(g)=\frac{1}{2\pi s\Delta}\int_{-s}^{s}\ee^{\imath kz}\Psi\left(\frac{z}{s}\right)\Biggl\{\int_{\xi-\frac{r}{2}}^{\xi+\frac{r}{2}}\ee^{-\imath kt}\dd t\Biggr\}\dd z\]

So now the two integrals are completely separated. Explicit calculation
gives\[
\int_{\xi-\frac{r}{2}}^{\xi+\frac{r}{2}}\ee^{-\imath kt}\dd t=-\frac{\imath\ee^{-\frac{1}{2}\imath k(r+2\xi)}\left(-1+\ee^{\imath kr}\right)}{k}\]

Now we scale back: $z=sy$ and obtain the explicit formula\begin{equation}
\begin{split}c_{k}(g) & =-\frac{\imath\ee^{-\frac{1}{2}\imath k(r+2\xi)}\left(-1+\ee^{\imath kr}\right)}{k}\cdot\frac{1}{2\pi\Delta}\int_{-1}^{1}\ee^{\imath ksy}\Psi(y)\dd y\\
 & =-\frac{\imath\ee^{-\frac{1}{2}\imath k(r+2\xi)}\left(-1+\ee^{\imath kr}\right)}{2\pi\Delta k}c_{-ks}(\Psi)\end{split}
\label{eq:bump-fc-formula}\end{equation}

Finally we would like to determine the optimal values for $s$ and
$r$ so that $\left|\fc\left(g\right)\right|$ decrease as rapidly
as possible with $k\to\infty$.

First note that since $\Psi\in C^{\infty}$, then for every $\alpha>1$
there exists a constant $\Cl{mollifier-fc-bound}\left(\alpha\right)$
such that\[
\left|\fc\left(\Psi\right)\right|\leq\Cr{mollifier-fc-bound}\cdot\left|k\right|^{-\alpha}\]

The formula \eqref{eq:bump-fc-formula} suggests that we should take
$s$ to be as large as possible. Applying the conditions \eqref{eq:bump-params-compatibility}
we get that the following values maximize $s$:\[
\begin{split}s^{*} & =\frac{1}{3}\left(E-\frac{t}{2}\right)\\
r^{*} & =\frac{2}{3}\left(E+t\right)\end{split}
\]

We have thus proved the following result:
\begin{thm}
\label{thm:bump-fourier-decay}Given $E,t$ with $2E>t$ and a point
$\jp$, let $g_{E,t}\left(x;\jp\right)$ be the bump constructed above.
Then it satisfies the conditions $\left(G1\right)-\left(G4\right)$
such that for every $\alpha>1$ there exists a constant $\Cl{bump-final}=\Cr{bump-final}\left(\alpha\right)$
such that for all $k\in\naturals$\[
\left|\fc\left(g_{E,t}\right)\right|\leq\Cr{bump-final}\cdot\left(2E-t\right)^{-\alpha}k^{-1-\alpha}\]

\end{thm}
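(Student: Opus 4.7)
The plan is to bound each factor appearing in the explicit formula~\eqref{eq:bump-fc-formula}
\[
c_{k}(g) = -\frac{\imath\,\ee^{-\frac{1}{2}\imath k(r+2\jp)}\bigl(-1+\ee^{\imath kr}\bigr)}{2\pi\Delta k}\, c_{-ks}(\Psi)
\]
and then substitute the optimal choices $s^{*}=\tfrac{1}{3}(E-\tfrac{t}{2})$, $r^{*}=\tfrac{2}{3}(E+t)$. The prefactor is trivially bounded: $|e^{-\imath k(r+2\jp)/2}|=1$ and $|{-1+e^{\imath kr}}|\leq 2$, so the whole prefactor has modulus at most $\frac{1}{\pi\Delta\,k}$.

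The heart of the argument is the polynomial-decay bound on $c_{-ks}(\Psi)$. I would argue this by recalling that $\Psi$ is $C^{\infty}$ with support in $[-1,1]$; hence $\Psi$ is a $C^{\infty}$ periodic function on $[-\pi,\pi]$ after trivial extension by zero, and the integration-by-parts formula applied $\alpha$ times (boundary terms vanish by the compact support of $\Psi$) yields, for every integer $\alpha>1$, a constant $\Cl{mollifier-fc-bound}=\Cr{mollifier-fc-bound}(\alpha)$ with
\[
\bigl|c_{m}(\Psi)\bigr|\leq \Cr{mollifier-fc-bound}\,|m|^{-\alpha}\qquad (m\neq 0).
\]
The same estimate is valid for the non-integer frequency $m=-ks$ appearing in~\eqref{eq:bump-fc-formula}, since the derivation only uses integration by parts on the compactly supported $\Psi$. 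Substituting $m=-ks$ yields $|c_{-ks}(\Psi)|\leq \Cr{mollifier-fc-bound}(ks)^{-\alpha}$.

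Combining the two bounds gives
\[
\bigl|c_{k}(g)\bigr|\leq \frac{\Cr{mollifier-fc-bound}}{\pi\Delta}\cdot s^{-\alpha}\,k^{-1-\alpha}.
\]
Inserting the optimal $s^{*}=\tfrac{1}{6}(2E-t)$ converts $s^{-\alpha}$ into $6^{\alpha}(2E-t)^{-\alpha}$, producing the claimed inequality with $\Cr{bump-final}(\alpha)=6^{\alpha}\Cr{mollifier-fc-bound}(\alpha)/(\pi\Delta)$. The main (minor) obstacle in making this fully rigorous is the non-integer frequency issue in the last bound, but it dissolves as soon as one derives the decay directly from repeated integration by parts on the compactly supported $\Psi$ rather than invoking a Fourier-series estimate.
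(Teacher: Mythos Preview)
Your proposal is correct and follows essentially the same route as the paper: the paper's argument is exactly the derivation of the explicit convolution formula~\eqref{eq:bump-fc-formula}, the $C^{\infty}$ decay bound $|c_{m}(\Psi)|\leq C(\alpha)|m|^{-\alpha}$, and the substitution of the optimal $s^{*}=\tfrac{1}{6}(2E-t)$. If anything, you are slightly more careful than the paper in flagging (and dissolving) the non-integer-frequency issue for $c_{-ks}(\Psi)$, which the paper simply glosses over.
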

\begin{figure}
\subfloat[The standard mollifier $\Psi(x)$]{\includegraphics{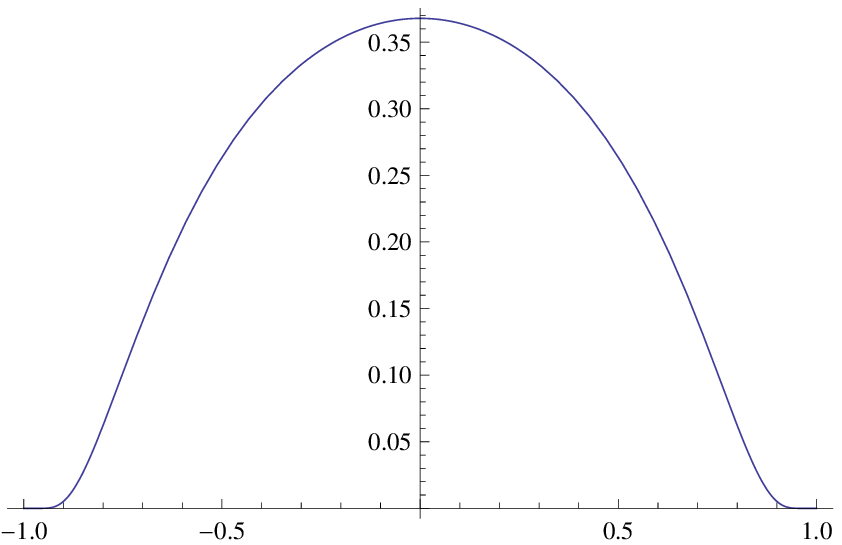}

}\subfloat[The final bump for $\jp=0$]{\includegraphics[bb=148bp 570bp 365bp 719bp,clip]{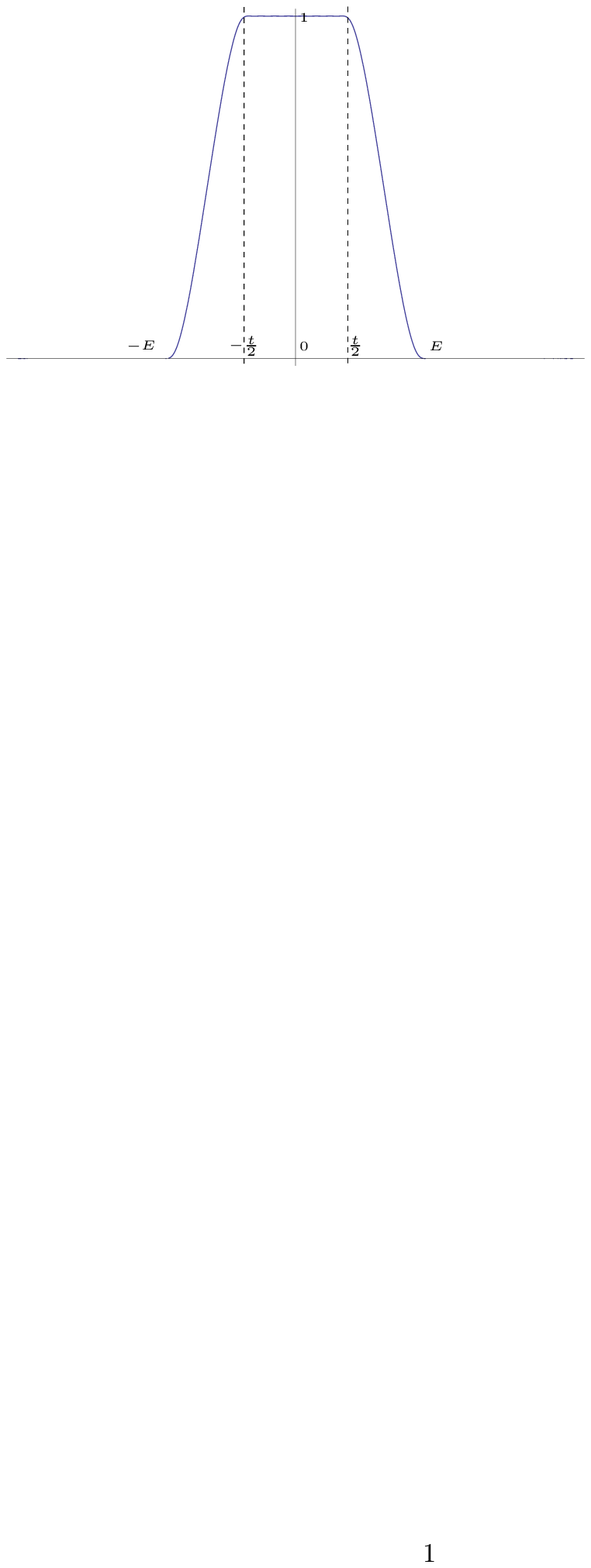}

}

\caption{The construction of the bump}
\label{fig:bump-construction}
\end{figure}

\section{\label{app:initial-estimates-prony}Initial estimates via Prony's
method}

In this appendix we present a rigorous proof that the Eckhoff's method
of order zero produces sufficiently accurate estimates of the jump
locations $\left\{ \jp_{j}\right\} $, to be used in \prettyref{alg:localization-alg}.
Denote $\omega_{j}=\ee^{-\imath\jp_{j}}$. For $\ord=0$, the system
\eqref{eq:singular-fourier-explicit} becomes\begin{equation}
\underbrace{\sum_{j=1}^{\np}\jc_{0,j}\omega_{j}^{k}}_{=m_{k}}\approx2\pi\left(\imath k\right)\fc(\fun)\label{eq:prony-system}\end{equation}

\eqref{eq:prony-system} is a well-known system of equations which
is sometimes called the Prony system (\cite{sig_ack}) or Sylvester-Ramanujan
system (\cite{lyubich2004sylvester}). The original method of solution
(due to Baron de Prony, \cite{prony1795essai}) is to exploit the
following fact.
\begin{lem}
The sequence $\left\{ m_{k}\right\} $ satisfies the recurrence relation
with constant coefficients\[
\sum_{i=0}^{\np}m_{k+i}q_{i}=0\]
where $\left\{ q_{i}\right\} $ are the coefficients of the polynomial\[
Q(z)\isdef\prod_{j=1}^{\np}\left(z-\omega_{j}\right)=\sum_{i=0}^{\np}q_{i}z^{i}\]
\end{lem}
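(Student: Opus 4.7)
The plan is a direct substitution–and–swap argument, which is the classical content of Prony's recurrence. I first substitute the closed form
\[
m_{k+i} \;=\; \sum_{j=1}^{\np} \jc_{0,j}\, \omega_j^{k+i}
\]
(valid by the definition of $m_k$ in the displayed system preceding the lemma) into the left-hand side $\sum_{i=0}^{\np} m_{k+i} q_i$. The double sum is finite, so I may interchange the order of summation freely and pull the factor $\omega_j^k$, which does not depend on $i$, out of the inner sum.

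After the interchange I am left with
\[
\sum_{i=0}^{\np} m_{k+i} q_i \;=\; \sum_{j=1}^{\np} \jc_{0,j}\, \omega_j^k \cdot \Biggl(\sum_{i=0}^{\np} q_i\, \omega_j^i\Biggr).
\]
The inner sum in parentheses is precisely $Q(\omega_j)$. Since $Q$ was defined as $\prod_{j=1}^{\np}(z-\omega_j)$, each $\omega_j$ is by construction a root of $Q$, so $Q(\omega_j)=0$ for every $j=1,\dots,\np$. Therefore every term of the outer sum vanishes identically (in $k$), yielding the claimed recurrence.

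There is no genuine obstacle here — the only things to be careful about are: (i) the identification of $m_k$ with the exponential sum $\sum_j \jc_{0,j}\omega_j^k$, which is just the $d=0$ case of the formula defining $m_k$; and (ii) the fact that $Q$ has degree exactly $\np$, so that the recurrence has order $\np$ with leading coefficient $q_{\np}=1$, which means it can be solved forward for $m_{k+\np}$ in terms of $m_k,\dots,m_{k+\np-1}$. This is the property that will be used subsequently in Prony's method to recover the $\omega_j$ as the roots of the characteristic polynomial of the recurrence, once the $m_k$ are approximated by $2\pi(\imath k)\fc(\fun)$.
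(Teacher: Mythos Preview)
Your proof is correct and follows exactly the same substitute--swap--factor argument as the paper: write $m_{k+i}$ as the exponential sum, interchange the finite sums, and recognize $\sum_i q_i\omega_j^i=Q(\omega_j)=0$. The additional remarks you make about the leading coefficient $q_{\np}=1$ and the forward solvability of the recurrence are not needed for the lemma itself but correctly anticipate how it is used in the subsequent corollary.
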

\begin{proof}
We have $Q(\omega_{j})=0$ for all $j=1,\dots,\np$. Therefore\[
\sum_{i=0}^{\np}q_{i}m_{k+i}=\sum_{i=0}^{\np}q_{i}\sum_{j=1}^{\np}\jc_{0,j}\omega_{j}^{k+i}=\sum_{j=1}^{\np}\jc_{0,j}\omega_{j}^{k}\sum_{i=0}^{\np}q_{i}\omega_{j}^{i}=\sum_{j=1}^{\np}\jc_{0,j}\omega_{j}^{k}Q\left(\omega_{j}\right)=0\qedhere\]
\end{proof}
\begin{cor}
Let $q_{\np}=1$ for normalization. Then for all $k\in\naturals$
the coefficient vector $\left\{ q_{i}\right\} _{i=0}^{\np-1}$ is
the solution of the linear system\begin{equation}
\underbrace{\left[\begin{array}{cccc}
m_{k} & m_{k+1} & \cdots & m_{k+\np-1}\\
m_{k+1} & m_{k+2} & \cdots & m_{k+\np}\\
\vdots & \vdots & \vdots & \vdots\\
m_{k+\np-1} & m_{k+\np} & \cdots & m_{k+2\np-2}\end{array}\right]}_{\isdef H_{k}}\times\left[\begin{array}{c}
q_{0}\\
q_{1}\\
\vdots\\
q_{\np-1}\end{array}\right]=-\left[\begin{array}{c}
m_{k+\np}\\
m_{k+\np+1}\\
\vdots\\
m_{k+2\np-1}\end{array}\right]\label{eq:prony-exact-system}\end{equation}

\end{cor}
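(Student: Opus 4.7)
The plan is to deduce the corollary as an immediate consequence of the recurrence relation established in the preceding lemma. That lemma asserts that for every $k\in\naturals$,
\[
\sum_{i=0}^{\np} q_{i}\, m_{k+i} = 0.
\]
Under the normalization $q_{\np}=1$, I would isolate the highest-index term on the right-hand side, obtaining
\[
\sum_{i=0}^{\np-1} q_{i}\, m_{k+i} = -m_{k+\np}.
\]
This is precisely the first row of the asserted matrix identity.

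Next I would apply exactly the same identity with $k$ replaced by $k+1,\,k+2,\,\dots,\,k+\np-1$ in succession. Each such instance of the recurrence produces one of the remaining rows of the system, since the unknowns $\{q_{i}\}_{i=0}^{\np-1}$ are independent of the shift and only the sampled values of the sequence $\{m_{\cdot}\}$ change. Stacking the $\np$ resulting scalar equations produces the Hankel matrix $H_{k}$ on the left and the column $\bigl[-m_{k+\np},\dots,-m_{k+2\np-1}\bigr]^{\mathrm{T}}$ on the right, exactly as stated.

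There is essentially no obstacle here; the entire content of the corollary is a bookkeeping reformulation of the recurrence, and one never needs to invert $H_{k}$ to assert that $(q_{0},\dots,q_{\np-1})$ \emph{is} a solution (uniqueness would require invertibility of $H_{k}$, which the corollary does not claim). If one wanted, one could remark that $H_{k}$ admits the Vandermonde factorization $H_{k}=V_{k}\operatorname{diag}(\jc_{0,j}\omega_{j}^{k})V_{k}^{\mathrm{T}}$ with $V_{k}$ the $\np\times\np$ Vandermonde matrix in $\omega_{1},\dots,\omega_{\np}$, which is nonsingular because the $\omega_{j}$ are distinct and the $\jc_{0,j}$ are nonzero; this secures uniqueness of the solution, but is not required for the statement as given.
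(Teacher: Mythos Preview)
Your proposal is correct and is exactly the argument the paper has in mind: the corollary is stated without proof because it is an immediate rewriting of the preceding recurrence $\sum_{i=0}^{\np} q_i m_{k+i}=0$ with $q_{\np}=1$, applied at shifts $k,k+1,\dots,k+\np-1$ and stacked into the Hankel system. Your additional remark on the Vandermonde factorization of $H_k$ (and hence uniqueness) is also the content of the lemma that appears a few lines later in the paper.
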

After this preparation, we can now describe the algorithm for obtaining
initial estimates using Prony's method. Recall that our a-priori bounds
are given by $\jmin,\jmax,\jdist$ and $T$ - see \prettyref{sec:localization}.
\begin{algorithm}
\label{alg:prony-initial-estimates}Let us be given the first $\startcoeff+2\np-1$
Fourier coefficients $\fc(\fun)$ of a function $\fun$ with $\np$
unknown discontinuities $\left\{ \jp_{j}\right\} _{j=1}^{\np}$, which
is continuously differentiable between these discontinuities. Denote
the magnitudes of the jumps by $\left\{ \jc_{j}\right\} _{j=1}^{\np}$.
\begin{enumerate}
\item Calculate the sequence\[
r_{k}=2\pi\left(\imath k\right)\fc(\fun)\]

\item Solve the system\begin{equation}
\underbrace{\left[\begin{array}{cccc}
r_{\startcoeff} & r_{\startcoeff+1} & \cdots & r_{\startcoeff+\np-1}\\
r_{\startcoeff+1} & r_{\startcoeff+2} & \cdots & r_{\startcoeff+\np}\\
\vdots & \vdots & \vdots & \vdots\\
r_{\startcoeff+\np-1} & r_{\startcoeff+\np} & \cdots & r_{\startcoeff+2\np-2}\end{array}\right]}_{\isdef\nn{H_{k}}}\times\left[\begin{array}{c}
\nn q_{0}\\
\nn q_{1}\\
\vdots\\
\nn q_{\np-1}\end{array}\right]=-\left[\begin{array}{c}
r_{\startcoeff+\np}\\
r_{\startcoeff+\np+1}\\
\vdots\\
r_{\startcoeff+2\np-1}\end{array}\right]\label{eq:prony-approximate-system}\end{equation}

\item Take the estimated $\left\{ \nn{\omega}_{j}\right\} $ to be the roots
of the polynomial\[
\nn Q(z)=z^{\np}+\sum_{i=0}^{\np-1}\nn q_{i}z^{i}\]
and then set\[
\nn{\jp}_{j}=-\arg\nn{\omega}_{j}\]

\end{enumerate}
\end{algorithm}
Now we would like to analyze the accuracy of \prettyref{alg:prony-initial-estimates}.
First, we need to estimate the error in solving the system \eqref{eq:prony-approximate-system}.
We use standard result from numerical linear algebra.
\begin{lem}
\label{lem:condition-number}Consider the linear system $Ax=b$ and
let $\vec{x_{0}}$ be the exact solution. Let this system be perturbed:\[
\left(A+\Delta A\right)x=b+\Delta b\]
and let $x_{0}+\Delta x$ denote the exact solution of this perturbed
system. Denote\[
\delta x=\frac{\|\Delta x\|}{\|x_{0}\|}\qquad\delta A=\frac{\|\Delta A\|}{\|A\|}\qquad\delta b=\frac{\|\Delta b\|}{\|b\|}\qquad\kappa=\|A\|\|A^{-1}\|\enskip\text{(condition number)}\]

for some vector norm $\|\cdot\|$ and the induced matrix norm. Then\begin{equation}
\delta x\leq\frac{\kappa}{1-\kappa\cdot\delta A}\left(\delta A+\delta b\right)\label{eq:accuracy-through-condition-number}\end{equation}
\end{lem}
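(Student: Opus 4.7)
The plan is to derive the bound by directly manipulating the perturbed equation, extracting $\Delta x$ via $A^{-1}$, and then converting absolute norms into the relative quantities $\delta x,\delta A,\delta b$ using the natural inequality $\|b\|\leq\|A\|\|x_{0}\|$.

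First I would substitute $x=x_{0}+\Delta x$ into $(A+\Delta A)x=b+\Delta b$ and expand to obtain
\[
Ax_{0}+A\Delta x+\Delta A\cdot x_{0}+\Delta A\cdot\Delta x=b+\Delta b.
\]
Using $Ax_{0}=b$, the $Ax_{0}$ and $b$ cancel, leaving $A\Delta x=\Delta b-\Delta A\,(x_{0}+\Delta x)$. Premultiplying by $A^{-1}$ and taking the norm with the submultiplicative property of the induced matrix norm gives
\[
\|\Delta x\|\leq\|A^{-1}\|\bigl(\|\Delta b\|+\|\Delta A\|\,\|x_{0}\|+\|\Delta A\|\,\|\Delta x\|\bigr).
\]

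Next I would divide both sides by $\|x_{0}\|$. The middle term directly produces $\|A^{-1}\|\|\Delta A\|=\|A^{-1}\|\|A\|\cdot\delta A=\kappa\,\delta A$, and the last term produces $\kappa\,\delta A\cdot\delta x$. For the first term I use the observation $\|b\|=\|Ax_{0}\|\leq\|A\|\|x_{0}\|$, so $1/\|x_{0}\|\leq\|A\|/\|b\|$, which converts $\|\Delta b\|/\|x_{0}\|$ into at most $\|A\|\cdot\|\Delta b\|/\|b\|=\|A\|\,\delta b$. Multiplying through by $\|A^{-1}\|$ makes this contribution $\kappa\,\delta b$. Collecting:
\[
\delta x\leq\kappa\,\delta b+\kappa\,\delta A+\kappa\,\delta A\cdot\delta x.
\]

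Finally I rearrange to isolate $\delta x$: $(1-\kappa\,\delta A)\,\delta x\leq\kappa(\delta A+\delta b)$, which yields \eqref{eq:accuracy-through-condition-number} as soon as the factor $1-\kappa\,\delta A$ is positive. This positivity is the only implicit hypothesis and is the natural regime in which the perturbed matrix $A+\Delta A$ is still invertible; without it the statement is vacuous since the right-hand side would be nonpositive. There is really no ``main obstacle'' here, as this is the classical condition-number inequality; the only subtle point worth emphasizing in the write-up is the step $\|b\|\leq\|A\|\|x_{0}\|$, which is what converts the $\delta b$ contribution into the required relative form and thereby produces the factor $\kappa$ rather than merely $\|A^{-1}\|$.
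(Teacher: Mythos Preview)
Your argument is correct and is the standard derivation of this classical perturbation bound. The paper does not actually prove the lemma; it simply cites Wilkinson's text, so there is no alternative approach to compare against---you have supplied precisely the kind of proof that the cited reference contains.
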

\begin{proof}
See e.g. \cite{wilkinson1994rounding}.
\end{proof}
Consider \eqref{eq:prony-approximate-system}. The error in the right-hand
side is given by \eqref{eq:order0-fc-bound}. Therefore we now need
to estimate the condition number of the matrix $H_{k}$. Although
all the entries are bounded, it may still happen%
\footnote{Consider for instance $H_{k}=\left[\begin{array}{cc}
1 & 1+\frac{1}{k}\\
1 & 1-\frac{1}{k}\end{array}\right]$%
} that $\kappa\left(H_{k}\right)$ is unbounded. Fortunately, this
is not the case. To see this, we are going to factorize $H_{k}$ into
a component which depends on $k$, and a component which doesn't.
\begin{lem}
Let $V=V\left(\jp_{1},\dots,\jp_{\np}\right)$ denote the Vandermonde
matrix on the nodes $\left\{ \omega_{j}\right\} $, i.e.\[
V=\left[\begin{array}{cccc}
1 & 1 & \dots & 1\\
\omega_{1} & \omega_{2} & \dots & \omega_{\np}\\
\vdots & \vdots & \ddots & \vdots\\
\omega_{1}^{\np-1} & \omega_{2}^{\np-1} & \dots & \omega_{\np}^{\np-1}\end{array}\right]\]

Then for all $k\in\naturals$\[
H_{k}=V\times\diag\left\{ \jc_{0,j}\omega_{j}^{k}\right\} \times V^{T}\]
\end{lem}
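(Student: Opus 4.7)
The plan is to prove this factorization by a direct entrywise computation, exploiting the definition of $m_k$ as a generalized power sum over the nodes $\{\omega_j\}$.

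First, I would write out the $(a,b)$ entry of $H_k$ for $1 \le a, b \le \np$. By the definition of the Hankel matrix in \eqref{eq:prony-exact-system}, we have $(H_k)_{a,b} = m_{k+a+b-2}$. Substituting the expression $m_\ell = \sum_{j=1}^{\np} \jc_{0,j} \omega_j^{\ell}$ from \eqref{eq:prony-system}, I would split the exponent as
\[
(H_k)_{a,b} = \sum_{j=1}^{\np} \jc_{0,j} \omega_j^{k+a+b-2} = \sum_{j=1}^{\np} \omega_j^{a-1} \cdot \bigl(\jc_{0,j} \omega_j^{k}\bigr) \cdot \omega_j^{b-1}.
\]

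Second, I would recognize the right-hand side as a matrix product. Since $(V)_{a,j} = \omega_j^{a-1}$ and $(V^T)_{j,b} = \omega_j^{b-1}$, the last expression is exactly the $(a,b)$ entry of $V \cdot D \cdot V^T$ where $D = \diag\{\jc_{0,j}\omega_j^k\}$. Since this identity holds entrywise for every $a,b$, the matrix identity follows.

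There is no real obstacle here — the lemma is a routine factorization of a Hankel matrix whose entries are power sums, and the only subtlety is to keep the indexing of $a,b$ consistent with the convention used in \eqref{eq:prony-exact-system}. The factorization will subsequently be useful because $V$ and $V^T$ depend only on the jump locations (and are invertible by the assumed $\jdist$-separation of the $\jp_j$'s via the standard Vandermonde determinant formula), while the $k$-dependence is isolated in the diagonal factor $D$; this separation is what will ultimately allow us to bound $\kappa(H_k)$ uniformly in $k$ via \prettyref{lem:condition-number}.
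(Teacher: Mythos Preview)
Your proof is correct and is precisely the direct computation the paper alludes to; the paper's own proof simply reads ``Direct computation from the definitions \eqref{eq:prony-system} and \eqref{eq:prony-exact-system}'', and your entrywise verification is the natural way to carry this out.
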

\begin{proof}
Direct computation from the definitions \eqref{eq:prony-system} and
\eqref{eq:prony-exact-system}.\end{proof}
\begin{cor}
For all $k\in\naturals$\[
\kappa\left(H_{k}\right)\leq\frac{\jmax}{\jmin}\kappa\left(V\right)\]
\end{cor}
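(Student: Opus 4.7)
Plan: The plan is to apply submultiplicativity of the spectral norm to the factorization $H_k = V\,D\,V^T$ supplied by the preceding lemma, with $D = \diag\{\jc_{0,j}\omega_j^k\}$, and to control the two factors $V$ and $D$ separately using the specific structure of the problem. The cornerstone observation is that $D$ is uniformly well-conditioned in $k$, so that all of the $k$-dependent deterioration has to be already absorbed into $\kappa(V)$.

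The first step is to bound $\kappa(D)$. Because $\omega_j = \ee^{-\imath\jp_j}$ lies on the unit circle, $|D_{jj}| = |\jc_{0,j}\omega_j^k| = |\jc_{0,j}|$ is independent of $k$, so the a priori bounds $\jmin \leq |\jc_{0,j}| \leq \jmax$ immediately yield $\|D\|_2 \leq \jmax$, $\|D^{-1}\|_2 \leq 1/\jmin$, and consequently $\kappa(D) \leq \jmax/\jmin$. This $k$-independence is precisely the reason for writing the factorization in the form $V D V^T$ rather than, for example, introducing a $k$-dependent Vandermonde, and it is the source of the $\jmax/\jmin$ prefactor in the conclusion.

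The second step is to transfer the estimate from $V,D$ to $H_k$ by submultiplicativity. Applying it to $H_k = V\,D\,V^T$ and to $H_k^{-1} = V^{-T}D^{-1}V^{-1}$, together with the identities $\|V^T\|_2 = \|V\|_2$ and $\|V^{-T}\|_2 = \|V^{-1}\|_2$ (which hold because transposition preserves singular values), produces
\[
\kappa(H_k) \;\leq\; \bigl(\|V\|_2\,\|D\|_2\,\|V^T\|_2\bigr)\cdot\bigl(\|V^{-T}\|_2\,\|D^{-1}\|_2\,\|V^{-1}\|_2\bigr) \;\leq\; \frac{\jmax}{\jmin}\cdot\kappa(V)\cdot\kappa(V^T).
\]
Collapsing $\kappa(V^T) = \kappa(V)$ leaves the bound stated in the corollary, with one caveat: the bare submultiplicativity argument naturally retains \emph{two} copies of the Vandermonde condition number, one from the left factor $V$ and one from the right factor $V^T$, and these must be identified to recover the single $\kappa(V)$ of the corollary.

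The main obstacle lies in the last collapse. An explicit $2\times 2$ test with $\omega_1 = 1$, $\omega_2 = \ee^{\imath\epsilon}$, $\jc_{0,1} = \jc_{0,2} = 1$ gives $\kappa(V)\sim 4/\epsilon$ yet $\kappa(H_0) = |\cot^2(\epsilon/4)|\sim 16/\epsilon^2$, showing that the naive product $\kappa(V)\cdot\kappa(V^T) = \kappa(V)^2$ is in fact sharp and cannot be reduced by submultiplicativity alone. A refinement that would recover the stated single power of $\kappa(V)$ must exploit the complex-symmetric structure $H_k^T = H_k$, for instance via the Takagi factorization $H_k = U\Sigma U^T$ with $U$ unitary, together with the splitting $D = |D|\cdot W$ into a positive diagonal $|D| = \diag(|\jc_{0,j}|)$ (of condition number $\leq \jmax/\jmin$) and a unitary diagonal $W = \diag(\omega_j^k\jc_{0,j}/|\jc_{0,j}|)$, so that $VW$ and $V$ share the same singular values and one of the two Vandermonde factors enters only through a norm-one multiplier. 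Carrying out such a cancellation rigorously is the nontrivial step, and the test case above suggests the corollary as stated is slightly optimistic; the plan above delivers $\kappa(H_k)\leq(\jmax/\jmin)\kappa(V)^2$, which is sharp and is in any case the estimate actually consumed by the perturbation bound \prettyref{lem:condition-number} in the downstream localization argument.
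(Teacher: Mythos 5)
The paper offers no proof of this corollary at all: it is stated as an immediate consequence of the preceding factorization $H_{k}=V\times\diag\left\{ \jc_{0,j}\omega_{j}^{k}\right\} \times V^{T}$, and the argument you give -- submultiplicativity of the operator norm together with the observation that $\left|\omega_{j}^{k}\right|=1$ makes the diagonal factor have condition number at most $\jmax/\jmin$ uniformly in $k$ -- is clearly the intended one. Your diagnosis of where that argument lands is also correct: since the factorization is a complex-symmetric congruence ($V^{T}$, not $V^{*}$), submultiplicativity applied to $H_{k}$ and to $H_{k}^{-1}=V^{-T}D^{-1}V^{-1}$ yields $\kappa\left(H_{k}\right)\leq\frac{\jmax}{\jmin}\kappa\left(V\right)^{2}$, and there is no general mechanism for collapsing the two Vandermonde factors into one. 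Your $2\times2$ test (two nodes at angular distance $\epsilon$, unit magnitudes) does show the printed bound fails: there $\kappa\left(V\right)\sim4/\epsilon$ while $\sigma_{\max}\left(H_{0}\right)$ stays bounded below and $\sigma_{\max}\left(H_{0}\right)\sigma_{\min}\left(H_{0}\right)=\left|\det V\right|^{2}\sim\epsilon^{2}$, forcing $\kappa\left(H_{0}\right)\sim\epsilon^{-2}\gg\frac{\jmax}{\jmin}\kappa\left(V\right)$. (Your claim that $\kappa\left(H_{0}\right)$ equals $\cot^{2}\left(\epsilon/4\right)$ exactly is not quite right -- the singular values of $VV^{T}$ are not exactly the squares of those of $V$ -- but the asymptotics $\sim16/\epsilon^{2}$ are correct, which is all that matters.) So the corollary as stated is overstated by one factor of $\kappa\left(V\right)$, and your proposal proves the correct, slightly weaker bound.

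You are also right that this is harmless for the rest of the paper: $\kappa\left(V\right)$ depends only on the jump geometry and not on $k$ or $\startcoeff$, so replacing $\kappa\left(V\right)$ by $\kappa\left(V\right)^{2}$ merely changes the $k$-independent constants in \prettyref{lem:condition-number}'s application, i.e.\ the factor $F$ in Lemma \ref{lem:Qbig-coeffs-pert} and Theorem \ref{thm:initial-estimates-prony}; the $k^{-1}$ and $k^{\alpha-1}$ accuracy rates, which are what the localization step actually consumes, are unaffected. The speculative Takagi-based refinement in your last paragraph is not needed and, as your own example indicates, could not restore the single power in any case.
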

\begin{rem}
$\kappa\left(V\right)$ is well-studied in e.g. \cite{gautschi1974nei}.
It essentially depends on the minimal distance between the nodes.
In particular:\[
\|V^{-1}\|\sim\max_{1\leq i\leq\np}\prod_{j=1,j\neq i}^{n}\frac{1}{|\omega_{j}-\omega_{i}|}\]
\end{rem}
\begin{lem}
\label{lem:Qbig-coeffs-pert}There exist constants $\Cl{q-coeffs-error-bound},\Cl[kk]{kk-q-coeffs-error-bound}$
such that for all $i=0,1,\dots,\ord$ and for all $k>\Cr{kk-q-coeffs-error-bound}\frac{T\jmax}{\jmin^{2}}\kappa\left(V\right)$\[
\left|q_{i}-\nn q_{i}\right|\leq\Cr{q-coeffs-error-bound}\frac{T\jmax}{\jmin^{2}}\kappa\left(V\right)k^{-1}\]
\end{lem}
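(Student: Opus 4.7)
The plan is to apply the perturbation bound of Lemma \ref{lem:condition-number} to the pair of linear systems \eqref{eq:prony-exact-system} and \eqref{eq:prony-approximate-system}. Set $\Delta H_k := \widetilde{H_k} - H_k$ and let $\Delta \vec{b}$ denote the difference between the right-hand sides. Every entry of $\Delta H_k$ and of $\Delta \vec{b}$ has the form $r_{k+\ell} - m_{k+\ell}$ with $0 \leq \ell \leq 2\np-1$, and so by the a priori bound \eqref{eq:order0-fc-bound} is of modulus at most $T/(k+\ell) \leq T/k$. Choosing any equivalent matrix/vector norm, this yields
\[
\|\Delta H_k\| \leq C_1(\np)\,\frac{T}{k},\qquad \|\Delta \vec{b}\| \leq C_1(\np)\,\frac{T}{k}.
\]

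Next I would need a matching \emph{lower} bound on $\|H_k\|$ and $\|\vec{b}\|$. For this I would use the factorization $H_k = V\,\diag\{\jc_{0,j}\omega_j^k\}\,V^T$ appearing just above the corollary: since $|\omega_j|=1$ and $|\jc_{0,j}| \geq \jmin$, the diagonal factor has all singular values at least $\jmin$, which combined with the bound on $\|V^{-1}\|$ supplies $\|H_k\|\geq c_2(\np)\,\jmin/\|V^{-1}\|^{\,2}$ and a parallel estimate for $\|\vec{b}\|$. Hence both relative perturbations satisfy
\[
\delta A := \frac{\|\Delta H_k\|}{\|H_k\|} \leq C_3\,\frac{T\,\|V^{-1}\|^{2}}{k\,\jmin},\qquad \delta b \leq C_3\,\frac{T\,\|V^{-1}\|^{2}}{k\,\jmin}.
\]

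Now insert the condition number bound $\kappa(H_k)\leq(\jmax/\jmin)\kappa(V)$ from the preceding corollary into \eqref{eq:accuracy-through-condition-number}. The requirement $\kappa(H_k)\,\delta A < \tfrac{1}{2}$ becomes
\[
k \;>\; \Cr{kk-q-coeffs-error-bound}\,\frac{T\,\jmax\,\kappa(V)}{\jmin^{2}},
\]
which defines the threshold in the statement. Under this assumption Lemma \ref{lem:condition-number} gives
\[
\frac{\|\widetilde{\vec{q}}-\vec{q}\|}{\|\vec{q}\|} \;\leq\; 2\kappa(H_k)\bigl(\delta A + \delta b\bigr) \;\leq\; C_4\,\frac{T\,\jmax\,\kappa(V)}{\jmin^{2}\,k}.
\]
Since $|\omega_j|=1$ the exact coefficients satisfy $|q_i| \leq \binom{\np}{i}$, so $\|\vec{q}\|$ is bounded by a constant depending only on $\np$; converting the relative error to an absolute one and passing from the vector norm to coordinates gives the claimed componentwise estimate.

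The main obstacle is obtaining the lower bound for $\|H_k\|$ (and $\|\vec{b}\|$) in terms of $\jmin$ and $\|V^{-1}\|$ --- that is, showing that the cancellations inside the sums $m_{k+\ell} = \sum_j \jc_{0,j}\omega_j^{k+\ell}$ cannot conspire to make the whole Hankel matrix degenerate in a way not already reflected in the factorization. Everything else is a routine invocation of the textbook relative-perturbation bound.
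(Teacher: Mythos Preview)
Your argument is the paper's argument: plug the two Hankel systems into the textbook perturbation bound (Lemma~\ref{lem:condition-number}) together with $\kappa(H_k)\le(\jmax/\jmin)\,\kappa(V)$, then convert the relative error on $\vec q$ to a componentwise one. The only point of divergence is the lower bound on $\|H_k\|$ and $\|\vec b\|$. The paper simply asserts $|m_k|\ge c\,\jmin$, which---as you in effect recognise---is false for an \emph{individual} entry (take $\np=2$, $\omega_2=-\omega_1$, $\jc_{0,1}=\jc_{0,2}$: then $m_k=0$ for odd $k$); your recourse to the factorisation $H_k=V\,\diag\{\jc_{0,j}\omega_j^{k}\}\,V^{T}$ is therefore the more honest route and you are right to flag it as the crux. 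Two remarks, though. First, the factorisation actually gives the sharper bound $\|H_k\|\ge c\,\jmin/\|V^{-1}\|$ rather than $\jmin/\|V^{-1}\|^{2}$: choose $x=(V^{T})^{-1}e_j$, so that $\|H_kx\|=|\jc_{0,j}|\,\|V e_j\|=|\jc_{0,j}|\sqrt{\np}$ while $\|x\|\le\|V^{-1}\|$. Second, whichever version you use, your $\delta A$ and $\delta b$ then carry a stray power of $\|V^{-1}\|$ that you drop without comment when you write the final estimate as $C_4\,T\jmax\kappa(V)/(\jmin^{2}k)$; either display it, or state explicitly that it is absorbed into a constant permitted to depend on the node geometry. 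With that bookkeeping made explicit, your proof and the paper's coincide.
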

\begin{proof}
In the context of \prettyref{lem:condition-number}, our original
system is $H_{k}\vec q=\vec m$ \eqref{eq:prony-exact-system} and
the perturbed system is $\nn H_{k}\vec{\nn q}=\vec{\nn m}$ \eqref{eq:prony-approximate-system}.
Note that $\left|m_{k}\right|\geq\jmin\cdot\Cl{mk-bound}$ for some
$\Cr{mk-bound}$. From previous considerations we therefore have\[
\begin{split}\delta H_{k} & =\frac{\|\nn H_{k}-H_{k}\|}{\|H_{k}\|}\leq\Cl{delta-hk}\frac{T}{\jmin}\cdot\frac{1}{k}\\
\delta\vec m & \leq\Cl{delta-m}\cdot\frac{T}{\jmin}\cdot\frac{1}{k}\\
\kappa\left(H_{k}\right) & \leq\frac{\jmax}{\jmin}\kappa\left(V\right)\end{split}
\]
We would like to estimate $\delta\vec q$ according to \eqref{eq:accuracy-through-condition-number}.
If\[
k>\underbrace{2\Cr{delta-hk}}_{\isdef\Cr{kk-q-coeffs-error-bound}}\frac{T}{\jmin}\cdot\frac{\jmax}{\jmin}\kappa\left(V\right)\]
then $\kappa\left(H_{k}\right)\delta H_{k}\leq\frac{1}{2}$ and so\[
\delta\vec q\leq\underbrace{2\left(\Cr{delta-hk}+\Cr{delta-m}\right)}_{\isdef\Cr{q-coeffs-error-bound}}\frac{\jmax}{\jmin}\kappa\left(V\right)\frac{T}{\jmin}\cdot\frac{1}{k}\qedhere\]

\end{proof}
We can finally estimate the accuracy of \prettyref{alg:prony-initial-estimates}.
To shorten notation, let\[
F=F\left(\magbounds,\geom,T\right)\isdef\frac{T\jmax}{\jmin^{2}}\kappa\left(V\right)\]

\begin{thm}
\label{thm:initial-estimates-prony} For every $0<\alpha<1$ there
exist constants $\Cl{prony-final}\left(\alpha\right),\Cl[kk]{kk-prony-final}\left(\alpha,F\right)$
such that for every $k>\Cr{kk-prony-final}$ \prettyref{alg:prony-initial-estimates}
reconstructs the locations of the jumps with accuracy\[
\left|\jp_{j}-\nn{\jp}_{j}\right|\leq\Cr{prony-final}\cdot F\cdot k^{\alpha-1}\]
\end{thm}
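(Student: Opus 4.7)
The plan is to translate the coefficient perturbation bound from Lemma \ref{lem:Qbig-coeffs-pert} into a root perturbation bound for $\nn Q$ versus $Q$ via Rouché's theorem (Theorem \ref{thm:rouche}), and then convert Euclidean closeness of roots on the unit circle into angular closeness of the associated arguments $\{-\arg \omega_j\}$.

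First I would fix the geometric constants coming from the a-priori data. Because $|\jp_i - \jp_j| \geq \jdist$ for $i\neq j$, the roots $\{\omega_j\}$ of $Q$ are simple and pairwise separated by at least $\delta_0 \isdef 2\sin(\jdist/2) > 0$. Consequently $|Q'(\omega_j)|$ is bounded below by a constant $c_0 > 0$ depending only on $\np$ and $\jdist$, and a Taylor expansion of $Q$ at $\omega_j$ gives a lower bound $|Q(z)| \geq (c_0/2)\,|z-\omega_j|$ uniformly on the disk $B_{\delta_0/2}(\omega_j)$. On the other hand, Lemma \ref{lem:Qbig-coeffs-pert} supplies, for $k > \Cr{kk-q-coeffs-error-bound} F$, the coefficient bound $|q_i - \nn q_i| \leq \Cr{q-coeffs-error-bound} F k^{-1}$, which, since all points considered satisfy $|z| \leq 2$, yields a uniform pointwise bound $|\nn Q(z) - Q(z)| \leq C_1 F k^{-1}$ on a neighborhood of the unit circle, with $C_1$ depending only on $\np$.

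Now I would set $\rho(k) \isdef 2C_1 F / (c_0 k)$ and choose $k$ large enough that $\rho(k) < \delta_0/2$. On each circle $\partial B_{\rho(k)}(\omega_j)$ one then has
\[
|Q(z)| \geq \tfrac{c_0}{2}\rho(k) = C_1 F k^{-1} \geq |\nn Q(z) - Q(z)|,
\]
so Rouché's theorem produces, inside each $B_{\rho(k)}(\omega_j)$, a unique simple root $\nn \omega_j$ of $\nn Q$ satisfying $|\nn \omega_j - \omega_j| \leq C_2 F k^{-1}$ for a constant $C_2$ depending on $\np$ and $\jdist$. Since $|\omega_j| = 1$ and $\nn \omega_j$ lies within $\rho(k)$ of the unit circle, for large $k$ the angular error obeys $|\jp_j - \nn \jp_j| = |\arg \omega_j - \arg \nn \omega_j| \leq 2|\omega_j - \nn \omega_j| \leq 2C_2 F k^{-1}$.

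The bound thus obtained is actually of the form $\Cr{prony-final} F k^{-1}$, which for $0 < \alpha < 1$ and any $k \geq 1$ is trivially stronger than the claimed $\Cr{prony-final} F k^{\alpha-1}$; absorbing all the thresholds (the one from Lemma \ref{lem:Qbig-coeffs-pert} and the one enforcing $\rho(k) < \delta_0/2$) into a single quantity $\Cr{kk-prony-final}(\alpha, F)$ completes the argument. The main obstacle is the bookkeeping: the threshold $\Cr{kk-prony-final}$ must grow with $F$ since the Rouché radius $\rho(k)$ must fall below the fixed separation $\delta_0/2$, and the one-to-one matching of roots relies crucially on the simplicity of the $\{\omega_j\}$, which is precisely what the $\jdist$-separation hypothesis guarantees.
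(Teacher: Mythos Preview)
Your argument follows the same skeleton as the paper's proof: invoke Lemma~\ref{lem:Qbig-coeffs-pert} for the coefficient perturbation, apply Rouch\'e on small disks around the $\omega_j$, then convert Euclidean root distance to angular distance. The one substantive difference is your choice of Rouch\'e radius: you take $\rho(k)\sim F k^{-1}$ directly, whereas the paper sets $\rho(k)=F k^{\alpha-1}$ and uses the extra factor $k^{\alpha}$ to make the Rouch\'e inequality strict without tracking the ratio of constants. Your route actually yields the sharper bound $|\jp_j-\nn\jp_j|\lesssim F k^{-1}$, which of course implies the stated $k^{\alpha-1}$ estimate; the paper's formulation with $\alpha$ is just a convenience to decouple the threshold on $k$ from the ratio $C_1/c_0$.

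One small fix: as written, your Rouch\'e comparison reads $|Q(z)|\geq (c_0/2)\rho(k)=C_1 F k^{-1}\geq |\nn Q(z)-Q(z)|$, which is an equality at the critical value and not the strict inequality Theorem~\ref{thm:rouche} requires. Simply doubling the radius, say $\rho(k)=4C_1 F/(c_0 k)$, resolves this. Also, the claim that $|Q(z)|\geq(c_0/2)|z-\omega_j|$ holds on all of $B_{\delta_0/2}(\omega_j)$ needs the radius capped by a constant depending on $\sup|Q''|$ as well as $\delta_0$, but that constant still depends only on $\np$ and the a-priori data, so it is harmless bookkeeping.
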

\begin{proof}
We have shown that the perturbation $Q-\nn Q$ has coefficients of
magnitude $k^{-1}$. We will use the same reasoning as in \prettyref{sec:accuracy-single-jump}
in order to estimate the quantity $\left|\omega_{j}-\nn{\omega}_{j}\right|$
- which is the perturbation of the roots of $Q\left(z\right)$. Let
$\left|z\right|<2$. Since the coefficients of $Q\left(z\right)$
do not depend on $k$, we can obviously find constants $\Cl{Qbig-first-der}$
and $\Cl{Qbig-second-der}$ such that
\begin{enumerate}
\item $\left|Q'\left(\omega_{j}\right)\right|\geq\Cr{Qbig-first-der}$
\item $\left|Q''\left(z\right)\right|<\Cr{Qbig-second-der}$
\end{enumerate}
By a reasoning similar to \prettyref{lem:lower-bound-values-circle-via-derivatives}
we conclude that there exist constants $\Cl{Qbig-rho-lower}<1,\Cl{Qbig-lower-bound}$
such that for every function $0<\rho\left(k\right)<\Cr{Qbig-rho-lower}$
we have\begin{equation}
\left|Q\left(z\right)\right|>\Cr{Qbig-lower-bound}\rho\left(k\right)\qquad\forall z\in\partial B_{\rho\left(k\right)}\left(\omega_{j}\right)\label{eq:Qbig-lower}\end{equation}

Now let \[
\rho\left(k\right)=F\cdot k^{\alpha-1}\]
If $k>\left(\frac{F}{\Cr{Qbig-rho-lower}}\right)^{\frac{1}{1-\alpha}}$,
then $\rho\left(k\right)<\Cr{Qbig-rho-lower}$ and so \eqref{eq:Qbig-lower}
holds. On the other hand, by \prettyref{lem:Qbig-coeffs-pert} we
have that\[
\left|\left(Q-\nn Q\right)\left(z\right)\right|\leq\Cl{Qbig-diff-upper}\cdot F\cdot k^{-1}\]

Now finally we require that $k>\left(\frac{\Cr{Qbig-diff-upper}}{\Cr{Qbig-lower-bound}}\right)^{\frac{1}{\alpha}}$,
in which case\[
\left|\left(Q-\nn Q\right)\left(z\right)\right|\leq\Cr{Qbig-diff-upper}\cdot F\cdot k^{-1}<\Cr{Qbig-lower-bound}Fk^{\alpha-1}<\left|Q\left(z\right)\right|\]
and therefore $\nn Q(z)$ has a simple zero inside $B_{\rho\left(k\right)}\left(\omega_{j}\right)$.

Thus we have shown that $\left|\nn{\omega}_{j}-\omega_{j}\right|\leq F\cdot k^{\alpha-1}$.
Write $\nn{\omega}_{j}=\omega_{j}+\beta\left(k\right)k^{\alpha-1}$
where $\left|\beta\left(k\right)\right|<F$. Then by Taylor expansion
of the logarithm we will have (recall $\left|\omega_{j}\right|=1$)
for large enough $k>\Cr{kk-prony-final}\left(\alpha\right)$

\[
\left|\nn{\jp}_{j}-\jp_{j}\right|=\left|\arg\omega_{j}-\arg\nn{\omega}_{j}\right|=\left|\arg\left(\frac{\omega_{j}}{\nn{\omega}_{j}}\right)\right|\leq\Cr{prony-final}\left(\alpha\right)\cdot F\cdot k^{\alpha-1}\qedhere\]

\end{proof}

\end{document}